\newtheorem{theorem}{Theorem}[section]
\newtheorem{lemma}[theorem]{Lemma}
\newtheorem{corollary}[theorem]{Corollary}
\newtheorem{remark}[theorem]{Remark}
\newcommand{\average}{-\!\!\!\!\!\!\int}
\newcommand{\varep}{\varepsilon}
\begin{document}

\title
{\bf Homogenization of Stokes Systems \\ and Uniform Regularity Estimates}

\author{Shu Gu\thanks{Supported in part by NSF grant DMS-1161154.}
\and Zhongwei Shen\thanks{Supported in part by NSF grant DMS-1161154.}}

\date{ }

\maketitle

\begin{abstract}

This paper is concerned  with uniform regularity estimates for a family of Stokes systems with rapidly oscillating periodic coefficients.
We establish  interior Lipschitz estimates for the velocity and $L^\infty$ estimates for the pressure
as well as a Liouville property for solutions in $\mathbb{R}^d$.
We also obtain the boundary $W^{1,p}$ estimates in a bounded $C^1$ domain 
for any $1<p<\infty$.

\medskip

\noindent{\it Keywords.} Homogenization; Stokes systems; Regularity.

\medskip

\noindent{\it AMS Subject Classifications.} 35B27; 35J48.

\end{abstract}



\section{Introduction and Main Results}
\setcounter{equation}{0}

The primary purpose of this paper is to establish uniform regularity estimates
in the homogenization theory of Stokes systems with rapidly oscillating periodic coefficients.
More precisely, we consider the Stokes system in fluid dynamics,
\begin{equation}\label{Stokes}
\left\{
\aligned
 \mathcal{L}_\varep (u_\varep) +\nabla p_\varep & = F,\\
 \text{ div}( u_\varep ) & = g
\endaligned
\right.
\end{equation}
in a bounded domain $\Omega$ in $\mathbb{R}^d$,
where $\varep>0$ and
\begin{equation}\label{Second-order Elliptic Operator}
\mathcal{L}_\varep = -\text{div}\big(A(x/\varepsilon)\nabla\big)=
- \frac{\partial}{\partial x_i}\Bigl[a_{ij}^{\alpha\beta}\Bigl(\frac{x}{\varepsilon}\Bigr)\frac{\partial}{\partial x_j}\Bigr ]
\end{equation}
with $1\le i, j, \alpha, \beta\le d$ (the summation convention is used throughout).
We will assume that the coefficient matrix $A(y)=\big(a_{ij}^{\alpha\beta}(y)\big)$ is real, bounded measurable, 
and satisfies the ellipticity condition:
\begin{equation}\label{ellipticity}
\mu |\xi |^2 \le a_{ij}^{\alpha\beta} (y)\xi_i^\alpha\xi_j^\beta \le \frac{1}{\mu}|\xi|^2  \quad
\text{ for } y\in\mathbb{R}^d \text{ and } \xi=(\xi_i^\alpha)\in \mathbb{R}^{d\times d},
\end{equation}
where $\mu>0$, and the periodicity condition:
\begin{equation}\label{periodicity}
A(y+z)=A(y) \quad \text{ for } y\in \mathbb{R}^d \text{ and } z \in \mathbb{Z}^d.
\end{equation}
A function satisfying (\ref{periodicity}) will be called 1-periodic.
We note that
the system (\ref{Stokes}), which does not fit the standard framework of second-order elliptic systems considered in 
\cite{AL-1987, KLS1},
 is used in the modeling of flows in porous media.
 
The following is one of the main results of the paper.

\begin{theorem}\label{main-theorem-1}
Suppose that $A(y)$ satisfies the ellipticity condition (\ref{ellipticity})
and periodicity condition (\ref{periodicity}).
Let $(u_\varep, p_\varep)$ be a weak solution of the Stokes system (\ref{Stokes}) in 
$B(x_0, R)$ for some $x_0\in \mathbb{R}^d$ and $R>\varep$.
Then, for any $\varep\le r< R$,
\begin{equation}\label{main-estimate-1}
\aligned
&\left(\average_{B(x_0, r)} |\nabla u_\varep|^2\right)^{1/2}
+\left(\average_{B(x_0, r)} \big|p_\varep -\average_{B(x_0, R)}p_\varep\big|^2\right)^{1/2}\\
&\le C \left\{  \left(\average_{B(x_0, R)} |\nabla u_\varep|^2\right)^{1/2}
+\| g \|_{L^\infty(B(x_0, R))} 
+R^\rho [ g]_{C^{0, \rho}(B(x_0, R))}\right\}\\
& \qquad\qquad\qquad\qquad
+C\, R \left( \average_{B(x_0, R)} |F|^q\right)^{1/q},
\endaligned
\end{equation}
where $0<\rho=1-\frac{d}{q}<1$, and the constant $C$ depends only on $d$, $\mu$, and $\rho$.
\end{theorem}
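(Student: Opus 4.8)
\medskip

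\noindent\textbf{Strategy of the proof.}
By a translation and dilation we may assume $x_0=0$ and $R=1$, so that $0<\varep<1$; estimate \eqref{main-estimate-1} is then equivalent to its analogue at the dyadic scales $r=\theta^k\ge\varep$ for a fixed $\theta\in(0,1/4)$, together with an elementary energy estimate to pass to general $r\in[\varep,1)$. I would prove this by the compactness method of Avellaneda--Lin, adapted to the Stokes setting, in three steps: (i) an \emph{approximation lemma} comparing $(u_\varep,p_\varep)$ at a given scale with a solution of the homogenized Stokes system; (ii) a \emph{one-step improvement} of a scale-invariant excess functional, valid at all scales $\ge\varep$; (iii) an \emph{iteration} that sums a geometric series.

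For step (i), the claim is: for every $\delta>0$ there is $\eta=\eta(\delta,d,\mu)>0$ so that whenever $(u_\varep,p_\varep)$ solves \eqref{Stokes} in $B(0,1)$ with $0<\varep<\eta$, with $\average_{B(0,1)}|\nabla u_\varep|^2\le 1$ and the data suitably normalized, there is a pair $(u_0,p_0)$ solving the homogenized Stokes system
\[
\mathcal{L}_0 u_0+\nabla p_0=F,\qquad \mathrm{div}(u_0)=g\qquad\text{in }B(0,1/2),
\]
with $\mathcal{L}_0=-\mathrm{div}(\widehat A\,\nabla)$, $\widehat A$ constant and elliptic with constant controlled by $d$ and $\mu$, such that $\|u_\varep-u_0\|_{L^2(B(0,1/2))}\le\delta$. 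This is proved by contradiction and compactness: a sequence of such solutions is bounded in $H^1$, hence has a weakly convergent subsequence; the qualitative homogenization of the Stokes system (which one must establish via the velocity, pressure, and divergence-equation correctors associated with $A$, the flux $A(x/\varep)\nabla u_\varep$ converging weakly to $\widehat A\,\nabla u_0$) forces the weak limit to solve the homogenized Stokes system, contradicting the negation. The pressure is controlled in $L^2$ through the Ne\v{c}as inequality $\|p_\varep-\average p_\varep\|_{L^2}\lesssim\|\nabla p_\varep\|_{H^{-1}}$ together with $\nabla p_\varep=F-\mathcal{L}_\varep(u_\varep)$ bounded in $H^{-1}$, and the test functions with prescribed divergence are produced by the Bogovski\u\i\ operator.

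For steps (ii)--(iii): since $\widehat A$ is constant, $(u_0,p_0)$ enjoys interior Agmon--Douglis--Nirenberg/Schauder estimates, so $\nabla u_0\in C^{0,\rho}_{\mathrm{loc}}$ and $p_0\in C^{0,\rho}_{\mathrm{loc}}$, with the right-hand sides entering exactly with the scalings $\|g\|_{L^\infty}+r^\rho[g]_{C^{0,\rho}}$ and $r(\average|F|^q)^{1/q}$ (here $q=d/(1-\rho)$); consequently $\left(\average_{B(0,\theta)}|\nabla u_0-\average_{B(0,\theta)}\nabla u_0|^2\right)^{1/2}\le C\theta^{\rho}(\cdots)$. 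Feeding this into the approximation lemma applied on each ball $B(0,r)$, $r\in[\varep,1)$ (after rescaling $B(0,r)$ to $B(0,1)$, which replaces $\varep$ by $\varep/r\le1$, and after subtracting a Bogovski\u\i\ solution of $\mathrm{div}(w)=g-\average g$ to reduce to the divergence-free case), one obtains $\theta\in(0,1/4)$ and $C$ such that, writing $\Phi(r)$ for the left-hand side of \eqref{main-estimate-1} over $B(0,r)$ (pressure normalized by its own average over $B(0,r)$) and $\Psi(r)$ for the data terms over $B(0,r)$,
\[
\Phi(\theta r)\le\tfrac12\,\Phi(r)+C\,\Psi(r)\qquad\text{for }\varep\le\theta r\le r\le 1.
\]
Iterating from $r=1$ down to the smallest $\theta^k\ge\varep$ and summing the geometric series gives \eqref{main-estimate-1} at the dyadic scales; for the pressure, subtracting $\average_{B(0,r)}p_\varep$ at each stage and telescoping yields $|\average_{B(0,\theta^k)}p_\varep-\average_{B(0,1)}p_\varep|\lesssim\sum_j\Phi(\theta^j)$, which is absorbed into the bound. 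General $r$ follows from an ordinary Caccioppoli-type energy estimate on $B(0,r)$ relative to $B(0,\theta^{-1}r)$.

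\medskip

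\noindent\textbf{Main difficulty.}
Beyond the elliptic-systems template, the genuine obstacles all stem from the pressure and the constraint $\mathrm{div}(u_\varep)=g$: (a) establishing the qualitative homogenization of the Stokes system with merely bounded measurable coefficients and correctly identifying the limit, which requires Stokes-type correctors (including one for the divergence equation); (b) the uniform $L^2$-control of $p_\varep$ in the compactness step, resting on the Ne\v{c}as inequality on balls and the Bogovski\u\i\ operator; and (c) propagating $g$ and its seminorm $[g]_{C^{0,\rho}}$ correctly through the rescaling, i.e.\ reducing at each scale to the divergence-free case and tracking how $\|g\|_{L^\infty}$ and $[g]_{C^{0,\rho}}$ transform. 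Once these are settled, the excess-decay iteration is routine.
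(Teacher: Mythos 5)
Your overall architecture --- a compactness approximation lemma followed by an iteration over dyadic scales down to $\varep$ --- matches the paper's (your step (i) is essentially Lemma \ref{step-1}), but the iteration in steps (ii)--(iii) has a genuine gap. The claimed one-step improvement
$\Phi(\theta r)\le\tfrac12\Phi(r)+C\Psi(r)$, with
$\Phi(r)=\bigl(\average_{B(0,r)}|\nabla u_\varep|^2\bigr)^{1/2}+\bigl(\average_{B(0,r)}|p_\varep-\average_{B(0,r)}p_\varep|^2\bigr)^{1/2}$,
is false: take $A$ constant, $u_\varep(x)=Ex$ with $E$ trace-free, $p_\varep=0$, $F=0$, $g=0$; then $\Phi(\theta r)=\Phi(r)=|E|>0$ while $\Psi\equiv 0$. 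The Schauder estimate for $u_0$ controls only $\average_{B(0,\theta)}|\nabla u_0-\average_{B(0,\theta)}\nabla u_0|^2$, and the subtracted mean $\average_{B(0,\theta)}\nabla u_0$ is not small compared with $(\average_{B(0,1)}|\nabla u_0|^2)^{1/2}$, so no contraction of the full gradient norm can be extracted. What actually decays is the distance of $u_\varep$ to the corrector-modified profiles $(P_j^\beta+\varep\chi_j^\beta(x/\varep))E_j^\beta+H$, and one must carry the coefficients $E(\varep,\ell)$ through the iteration, proving $|E(\varep,\ell+1)-E(\varep,\ell)|\le C\theta^{\ell\sigma}(\cdots)$ so that they remain bounded; the gradient bound is then recovered at scale $\varep$ by Caccioppoli applied to the subtracted solution. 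This is Lemma \ref{step-2}: the correctors, which you invoke only in the qualitative homogenization step, must appear in the iterated quantity itself (plain linear profiles will not do, since $\nabla\chi_j^\beta$ is $O(1)$ and $u_\varep$ is only $C^{0,1}$, not $C^{1,\sigma}$, at scale $\varep$).

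The second gap is the pressure, which the paper singles out as its main novelty. Your telescoping bound $|\average_{B(0,\theta^k)}p_\varep-\average_{B(0,1)}p_\varep|\lesssim\sum_{j<k}\Phi(\theta^j)$ has about $\log_\theta\varep$ terms, each merely bounded, so it degenerates as $\varep\to0$; boundedness of $\Phi$ is not summability. To make the telescoping series converge one must subtract, at scale $\theta^\ell$, the corrector pressure $\pi_j^\beta(x/\varep)E_j^\beta(\varep,\ell)$: the residual pressure oscillation on $B(0,\theta^\ell)$ then decays like $\theta^{\ell\sigma}$ (via the excess decay of Lemma \ref{step-2} combined with the Ne\v{c}as inequality $\|p-\average p\|_{L^2}\lesssim\|\nabla p\|_{H^{-1}}$), while the discrepancy between $\average_{B(0,\theta^{\ell+1})}\pi_j^\beta(x/\varep)$ and $\average_{B(0,\theta^{\ell})}\pi_j^\beta(x/\varep)$ is $O(\varep\theta^{-\ell})$ by periodicity; the resulting sum $\sum_\ell(\theta^{\ell\sigma}+\varep\theta^{-\ell})$ is bounded uniformly in $\varep$. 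This is precisely the argument of Theorem \ref{theorem-5.1}, and it is absent from your proposal.
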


The scaling-invariant estimate (\ref{main-estimate-1}) should be regarded as a Lipschitz estimate for the velocity $u_\varep$
 and $L^\infty$ estimate for the pressure $p_\varep$ down to the microscopic 
 scale $\varep$, even though no smoothness assumption is made on the coefficient matrix $A(y)$. 
 Indeed, if estimate (\ref{main-estimate-1}) holds for any $0<r<R$, we would be able to bound 
 $$
 |\nabla u_\varep (x_0)| + | p_\varep (x_0) -\average_{B(x_0, R)} p_\varep|
 $$
 by the right hand side of (\ref{main-estimate-1}).
 Here we have taken a point of view  that 
 solutions should behave much better on mesoscopic scales due to homogenization 
 and that the smoothness of coefficients only effects the solutions below the microscopic scale
 (see this viewpoint in the recent development on quantitative stochastic homogenization 
 in \cite{Armstrong-Smart-2014, GNO-2014} and their references).
 In fact, under the additional assumption that $A(y)$ is H\"older continuous,
 \begin{equation}\label{holder}
 |A(x)-A(y)|\le \tau |x-y|^\lambda \qquad \text{ for } x,y\in \mathbb{R}^d,
 \end{equation}
 where $\lambda\in (0,1]$ and $\tau>0$,
 we may deduce the full uniform Lipschitz estimate for $u_\varep$ and $L^\infty$ estimate
 for $p_\varep$ from Theorem \ref{main-theorem-1}, by a  blow-up argument (see Section 5). 
 
 \begin{corollary}\label{corollary-1}
Suppose that $A(y)$ satisfies conditions (\ref{ellipticity}), 
(\ref{periodicity}) and (\ref{holder}).
Let $(u_\varep, p_\varep)$ be a weak solution of (\ref{Stokes})
in $B(x_0, R)$ for some $x_0\in \mathbb{R}^d$ and $R>0$.
Then
\begin{equation}\label{Lip-estimate}
\aligned
&\| \nabla u_\varep\|_{L^\infty(B(x_0, R/2))}
+\| p_\varep -\average_{B(x_0, R)} p_\varep \|_{L^\infty(B(x_0, R/2))} \\
&\le C \left\{  \left(\average_{B(x_0, R)} |\nabla u_\varep|^2\right)^{1/2}
+\| g\|_{L^\infty(B(x_0, R))} 
+R^\rho [ g]_{C^{0, \rho}(B(x_0, R))}\right\}\\
& \qquad\qquad\qquad\qquad
+C\, R \left( \average_{B(x_0, R)} |F|^q\right)^{1/q},
\endaligned
\end{equation}
where $0<\rho=1-\frac{d}{q}$, and the constant $C$ depends only on $d$, $\mu$, $\lambda$, $\tau$,
and $\rho$.
\end{corollary}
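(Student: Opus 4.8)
The plan is to deduce the full Lipschitz bound from the large-scale estimate in Theorem \ref{main-theorem-1} by the standard blow-up / two-scale argument, handling separately the regime $R\le\varep$ (where classical Schauder-type estimates for the frozen-coefficient Stokes system apply, since $A$ is H\"older continuous) and the regime $R>\varep$ (where Theorem \ref{main-theorem-1} carries the solution down to scale $\varep$, and below that scale the problem is essentially a constant-coefficient one after rescaling). By translation and dilation we may normalize $x_0=0$ and $R=1$; write $B_r=B(0,r)$.

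\smallskip
\textbf{Step 1 (small scales, $r<\varep$).} Fix a point $x\in B_{1/2}$ and a radius $r$ with $0<r<\varep$. Rescale by setting $v(y)=\varep^{-1}u_\varep(x+\varep y)$, $\pi(y)=p_\varep(x+\varep y)$ on the unit ball, so that $(v,\pi)$ solves a Stokes system with coefficients $A(\cdot + x/\varep)$ which, by \eqref{holder}, are uniformly H\"older continuous with the \emph{same} constants $\lambda,\tau$. Classical interior Schauder estimates for the (variable-coefficient, divergence-form) Stokes system — obtained by freezing the coefficients at the center, comparing with the constant-coefficient Stokes system, and using the $C^{1,\lambda}$ regularity of the latter together with the assumed regularity of the data — yield
\[
|\nabla v(0)| + |\pi(0)-\textstyle\average_{B_1}\pi|
\le C\Big\{\Big(\average_{B_1}|\nabla v|^2\Big)^{1/2} + \|g(x+\varep\,\cdot)\|_{L^\infty(B_1)} + [g(x+\varep\,\cdot)]_{C^{0,\rho}(B_1)} + \Big(\average_{B_1}|F(x+\varep\,\cdot)|^q\Big)^{1/q}\Big\}.
\]
Undoing the scaling and using $[g(x+\varep\,\cdot)]_{C^{0,\rho}}=\varep^\rho[g]_{C^{0,\rho}}\le[g]_{C^{0,\rho}}$ (since $\varep<1$) and the analogous scaling for $F$, this controls $|\nabla u_\varep(x)|$ and $|p_\varep(x)-\average_{B(x,\varep)}p_\varep|$ by the right-hand side of \eqref{Lip-estimate} with the $L^2$ average of $\nabla u_\varep$ taken over $B(x,\varep)\subset B_1$.

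\smallskip
\textbf{Step 2 (passing through the microscopic scale).} The quantity $\average_{B(x,\varep)}|\nabla u_\varep|^2$ appearing at the end of Step 1 is bounded, via Theorem \ref{main-theorem-1} applied on $B(x,\varep)\subset B(x, 1/4)$ (legitimate since here the inner radius equals $\varep$), by the right-hand side of \eqref{main-estimate-1} with $R=1/4$; enlarging $B(x,1/4)$ to $B_1$ and absorbing the pressure-normalization difference $\average_{B(x,1/4)}p_\varep-\average_{B_1}p_\varep$ into the $L^2$ gradient term (by Poincaré–Sobolev / the pressure estimate already built into \eqref{main-estimate-1}), this is in turn bounded by the right-hand side of \eqref{Lip-estimate}. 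Combining Steps 1 and 2 disposes of all $x\in B_{1/2}$ when $\varep\le 1$, i.e. proves \eqref{Lip-estimate} whenever $R\ge\varep$ after unscaling; and when $R<\varep$ the whole ball $B(x_0,R)$ lies at sub-microscopic scale, so Step 1 alone (rescaled by $R$ instead of $\varep$) gives the estimate directly. Taking the supremum over $x\in B_{1/2}$ gives the stated $L^\infty$ bounds on $\nabla u_\varep$ and on $p_\varep-\average_{B(x_0,R)}p_\varep$.

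\smallskip
\textbf{Main obstacle.} The delicate point is Step 1: one must have a clean interior $C^{1,\lambda}$/Schauder theory for the divergence-form Stokes system $-\mathrm{div}(A\nabla v)+\nabla\pi=F$, $\mathrm{div}(v)=g$ with H\"older coefficients, including the correct dependence of the pressure estimate on $\|g\|_{L^\infty}$ and $[g]_{C^{0,\rho}}$ and the scaling-invariant form of all norms. This is where the hypothesis \eqref{holder} enters essentially (it is not needed in Theorem \ref{main-theorem-1}), and care is required because the pressure is only determined up to an additive constant, so every step must track which ball the normalizing average is taken over. Once that frozen-coefficient input is in place, the rest is bookkeeping with the two scales $\varep$ and $R$.
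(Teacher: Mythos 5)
Your proposal is correct and follows essentially the same route as the paper: a blow-up to scale $\varep$ where the classical $C^{1,\alpha}$ Schauder theory for Stokes systems with H\"older coefficients (cited from Giaquinta) applies, combined with Theorem \ref{main-theorem-1} to carry the $L^2$ gradient average on $B(x,\varep)$ and the pressure-normalization differences up to the full ball, with the trivial case $\varep\gtrsim R$ handled directly by the uniform H\"older continuity of $A(x/\varep)$. The only cosmetic difference is that the paper invokes Theorems \ref{theorem-3.2} and \ref{theorem-5.1} separately and writes out the three-term triangle inequality for the pressure explicitly, which your "absorbing the normalization difference" step compresses but does not omit.
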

 
We remark that for the standard second-order elliptic system $\mathcal{L}_\varep (u_\varep)=F$,
 uniform interior Lipschitz estimates as well as uniform boundary Lipchitz estimates with Dirichlet
 conditions in $C^{1, \alpha}$ domains, were established by M. Avellaneda and F. Lin in \cite{AL-1987},
 under conditions (\ref{ellipticity}), (\ref{periodicity}) and (\ref{holder}).
 Under the additional symmetry condition $A^*=A$, the boundary
 Lipschitz estimates with Neumann boundary conditions in $C^{1,\alpha}$ domains
 were obtained by C. Kenig, F. Lin, and Z. Shen in \cite{KLS1}.
 This symmetry condition was recently removed by S.N.  Armstrong and Z. Shen in \cite{Armstrong-Shen-2014},
 where the uniform Lipschitz estimates were studied for second-order elliptic systems
 in divergence form with almost-periodic coefficients.
 
  The proof of Theorem \ref{main-theorem-1}, given in Sections 3 and 5,
   uses a compactness argument, which was introduced to the
 study of homogenization problems by M. Avellaneda and F. Lin \cite{AL-1987,AL-1989-II}.
 Let $(u_\varep, p_\varep)$ be a weak solution of the Stokes system (\ref{Stokes}) in
 $B(0,1)$. Suppose that
 $$
 \max\left\{ \left(\average_{B(0,1)} |u_\varep|^2\right)^{1/2},
 \left(\average_{B(0,1)} |F|^q\right)^{1/q},
 \| g\|_{C^\rho(B(0,1))} \right\}\le 1,
 $$ 
where $\rho=1-\frac{d}{q}>0$.
By the compactness argument with an iteration procedure, 
which is more or less the $L^2$ version of the compactness method used in \cite{AL-1987},
we are able to show that if $0<\varep<\theta^{\ell-1}\varep_0$
for some $\ell\ge 1$, then
\begin{equation}\label{1-10}
\left(\average_{B(0, \theta^\ell)}
\big|u_\varep -\big(P_j^\beta (x) +\varep\chi_j^\beta (x/\varep) \big) E_j^\beta (\varep, \ell) -G(\varep, \ell) \big|^2\,dx \right)^{1/2}
\le \theta^{\ell (1+\sigma)},
\end{equation}
where $0<\sigma<\rho$, and $E_j^\beta (\varep, \ell)$, $G(\varep, \ell)$
are constants satisfying $|E_j^\ell(\varep, \ell)| +|G(\varep, \ell)|\le C$ (see Lemma \ref{step-2}).
In (\ref{1-10}), $P_j^\beta (y)=y_j (0, \dots, 1, \dots)$ with $1$ in the $\beta^{th}$ position
and $\chi=(\chi_j^\beta (y))$ is the so-called corrector  associated with the Stokes system (\ref{Stokes}).
We remark that estimate (1.8) may be regarded as a $C^{1, \sigma}$ estimate for $u_\varep$
in scales larger than $\varep$.
This estimate allows us to deduce the Lipschitz estimate for the velocity $u_\varep$ down to the scale $\varep$
(see Section 3).
Moreover, by carefully analyzing the error terms in the asymptotic expansion of $p_\varep$,
the estimate (\ref{1-10}) also allows us to bound
$$
\Big| \average_{B(x_0,r)} p_\varep -\average_{B(x_0, R)} p_\varep\Big|
$$
and to derive the $L^\infty$ estimate for  the pressure $p_\varep$,
 one of the main novelties of this paper (see Section 5).
 We remark that the control of pressure terms usually requires new ideas in the study of Stokes or
 Navier-Stokes systems.
In our case $p_\varep$ is related to $\nabla u_\varep$ by singular integrals
that  are not  bounded on $L^\infty$;
Lipschitz estimates for $u_\varep$ in general do not imply $L^\infty$ estimates for $p_\varep$.
Also, observe that our $L^2$ formulation in (\ref{1-10}), in comparison with the $L^\infty$
setting used in \cite{AL-1987,KLS1}, appears to be necessary,
as the correctors are not necessarily bounded without smoothness conditions on $A$.
We further note that as a consequence of (\ref{1-10}),
we are able to establish a Liouville property for Stokes systems with periodic coefficients (see Section 4).
To the best of authors' knowledge, this appears to be the first result on the Liouville property
for Stokes systems with variable coefficients.

In this paper we also study the uniform boundary regularity estimates for (\ref{Stokes}) in $C^1$ domains.
The following theorem, whose proof is given in Section 6,
 may be regarded as a boundary H\"older estimate for $u_\varep$ down to the scale $\varep$.
 We emphases that as in the case of Theorem \ref{main-theorem-1},
  no smoothness assumption on $A$ is required for Theorem \ref{main-theorem-2}.

 \begin{theorem}\label{main-theorem-2}
 Suppose that $A(y)$ satisfies conditions (\ref{ellipticity}) and (\ref{periodicity}).
 Let $\Omega$ be a bounded $C^1$ domain in $\mathbb{R}^d$.
 Let $x_0\in \partial\Omega$ and $0<R<R_0$, where $R_0=\text{\rm diam}(\Omega)$.
 Let $(u_\varep, p_\varep)$ be a weak solution of
 \begin{equation}\label{boundary-equation}
 \left\{
 \aligned
 \mathcal{L}_\varep (u_\varep) +\nabla p_\varep  & =0 &\quad & \text{ in } B(x_0, R)\cap \Omega,\\
 \text{\rm div} (u_\varep) & =0&\quad & \text{ in } B(x_0, R)\cap \Omega,\\
 u_\varep  & =0 & \quad &\text{ on } B(x_0, R)\cap \partial\Omega.
 \endaligned
 \right.
 \end{equation}
 Suppose that $0<\varep\le r< R$ and $0<\rho<1$.
 Then
 \begin{equation}\label{boundary-holder}
 \left(\average_{B(x_0, r)\cap\Omega} |\nabla u_\varep|^2\right)^{1/2}
 \le C_\rho \left(\frac{r}{R} \right)^{\rho-1}
 \left(\average_{B(x_0, R)\cap\Omega} |\nabla u_\varep|^2\right)^{1/2},
 \end{equation}
 where $C_\rho$ depends only on $d$, $\mu$, $\rho$, and $\Omega$.
 \end{theorem}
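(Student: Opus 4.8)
The plan is to prove Theorem~\ref{main-theorem-2} by the compactness method of Avellaneda and Lin \cite{AL-1987}, in the $L^2$ formulation already used for Theorem~\ref{main-theorem-1} but adapted to the boundary of a $C^1$ domain. The guiding point is that at the homogenized level a $C^1$ domain only forces boundary regularity $C^{0,\rho}$ for every $\rho<1$; since such regularity lies below the microscopic scale $\varep$, it is insensitive to the lack of smoothness of $A$, which is exactly why Theorem~\ref{main-theorem-2} (unlike the Lipschitz estimate of Corollary~\ref{corollary-1}) needs no smoothness hypothesis on $A$.

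First I would normalize. By translation and dilation we may take $x_0=0$ and $R=1$, replacing $\varep$ by $\varep/R\in(0,1)$; since $\Omega$ is a fixed $C^1$ domain, all the rescaled domains $\frac1R(\Omega-x_0)$ lie in a fixed class $\mathcal{D}$ of domains whose boundary near $0$ is, after a rotation, a graph $x_d=\psi(x')$ with $\psi(0)=0$, $\nabla\psi(0)=0$, $\|\psi\|_{C^1}\le M$, and $\nabla\psi$ having a fixed modulus of continuity. One may also assume $r<1/2$ (the range $r\in[1/2,1)$ is trivial) and, after dividing $(u_\varep,p_\varep)$ by a constant, $\average_{B(0,1)\cap\Omega}|u_\varep|^2\le1$. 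The core of the proof is a one-step improvement lemma: there exist $\theta\in(0,1/4)$ and $\varep_0\in(0,1)$, depending only on $d$, $\mu$, $\rho$, $\mathcal{D}$, such that if $(u_\varep,p_\varep)$ solves \eqref{boundary-equation} in $B(0,1)\cap\Omega$ with $\Omega\in\mathcal{D}$, $\average_{B(0,1)\cap\Omega}|u_\varep|^2\le1$ and $0<\varep<\varep_0$, then $\average_{B(0,\theta)\cap\Omega}|u_\varep|^2\le\theta^{2\rho}$.

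To prove the lemma I would argue by contradiction and compactness. If it fails for all $\theta$, there exist $\varep_k\to0$, $1$-periodic matrices $A_k$ obeying \eqref{ellipticity}, domains $\Omega_k\in\mathcal{D}$, and solutions $(u_k,p_k)$ of \eqref{boundary-equation} with $\average_{B(0,1)\cap\Omega_k}|u_k|^2\le1$ but $\average_{B(0,\theta)\cap\Omega_k}|u_k|^2>\theta^{2\rho}$. The Caccioppoli inequality for the Stokes system --- which holds because the pressure term disappears when one tests against the divergence-free field $\eta^2u_k-\mathcal{B}_k(\nabla(\eta^2)\cdot u_k)$ built from a cutoff $\eta$ and the Bogovskii operator $\mathcal{B}_k$ on $B(0,3/4)\cap\Omega_k$ --- gives $\average_{B(0,3/4)\cap\Omega_k}|\nabla u_k|^2\le C$; together with the pressure estimate $\|p_k-\bar p_k\|_{L^2(B(0,3/4)\cap\Omega_k)}\le C\|\nabla u_k\|_{L^2(B(0,3/4)\cap\Omega_k)}$ and a Poincar\'e inequality (using $u_k=0$ on the boundary portion), the $u_k$ (extended by zero across $\partial\Omega_k$) are bounded in $H^1$ and the $p_k-\bar p_k$ in $L^2$. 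Passing to subsequences: $\Omega_k\to\Omega_\infty\in\mathcal{D}$ by Arzel\`{a}--Ascoli applied to $\psi_k$ and $\nabla\psi_k$; $A_k(\cdot/\varep_k)$ $H$-converges to a constant matrix $A^0$ elliptic with constant depending only on $\mu$; $u_k\to u_0$ in $L^2$ and $u_k\rightharpoonup u_0$ in $H^1$; $p_k-\bar p_k\rightharpoonup p_0$ in $L^2$; and by compensated compactness $(u_0,p_0)$ is a weak solution of the constant-coefficient Stokes system $-\mathrm{div}(A^0\nabla u_0)+\nabla p_0=0$, $\mathrm{div}\,u_0=0$ in $B(0,1/2)\cap\Omega_\infty$ with $u_0=0$ on $B(0,1/2)\cap\partial\Omega_\infty$ and $\average_{B(0,1/2)\cap\Omega_\infty}|u_0|^2\le1$. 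Now fix $\rho'\in(\rho,1)$. By the known boundary regularity of constant-coefficient Stokes systems in $C^1$ domains, $u_0$ is $C^{0,\rho'}$ up to the boundary near $0$, with a bound depending only on $d$, $\mu$, $\rho'$, $\mathcal{D}$; since $u_0(0)=0$ this gives $\average_{B(0,\theta)\cap\Omega_\infty}|u_0|^2\le C_0\,\theta^{2\rho'}$. Choosing $\theta$ so small that $C_0\theta^{2\rho'}\le\frac{1}{2}\theta^{2\rho}$ and then $k$ large, $\average_{B(0,\theta)\cap\Omega_k}|u_k|^2\to\average_{B(0,\theta)\cap\Omega_\infty}|u_0|^2<\theta^{2\rho}$, contradicting the hypothesis and thereby fixing $\theta$ and $\varep_0$.

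With $\theta$ and $\varep_0$ fixed, a rescaling argument shows the hypotheses of the lemma survive at every scale $\theta^j$ with $\varep<\theta^{j-1}\varep_0$, so iterating gives $\average_{B(0,\theta^j)\cap\Omega}|u_\varep|^2\le\theta^{2j\rho}\average_{B(0,1)\cap\Omega}|u_\varep|^2$ for all such $j$; interpolating between consecutive scales, and using the trivial bound when $r$ is comparable to $1$ or to $\varep$, one obtains $\average_{B(0,r)\cap\Omega}|u_\varep|^2\le C\,r^{2\rho}\average_{B(0,1)\cap\Omega}|u_\varep|^2$ for all $\varep\le r<1$. Combining the Caccioppoli inequality at scale $r$ with the Poincar\'e inequality then yields
\[
\average_{B(0,r)\cap\Omega}|\nabla u_\varep|^2\le\frac{C}{r^2}\average_{B(0,2r)\cap\Omega}|u_\varep|^2\le C\,r^{2\rho-2}\average_{B(0,1)\cap\Omega}|u_\varep|^2\le C\,r^{2\rho-2}\average_{B(0,1)\cap\Omega}|\nabla u_\varep|^2,
\]
and undoing the dilation gives \eqref{boundary-holder}. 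I expect the main obstacle to be the compactness lemma: one must push the qualitative homogenization of the Stokes system all the way up to the $C^1$ boundary --- keeping the pressures bounded, making them converge, and checking the limit is again a constant-coefficient Stokes system with zero Dirichlet data on the limiting $C^1$ boundary --- and then invoke precisely the boundary $C^{0,\rho'}$ estimate for that limiting problem with $\rho'>\rho$, so that the surplus H\"older exponent absorbs the (a priori uncontrolled) constant $C_0$. The $C^1$ hypothesis enters exactly here, supplying this sub-Lipschitz regularity and no more, which is why the whole argument tolerates an arbitrary measurable periodic elliptic coefficient $A$.
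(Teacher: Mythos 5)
Your proposal follows essentially the same route as the paper's Section 6: a one-step $L^2$ decay lemma for $u_\varep$ itself, proved by contradiction and compactness against the constant-coefficient Stokes system in the limiting $C^1$ graph domain (the paper's Lemmas \ref{boundary-compactness-theorem} and \ref{step-1-b}, with $\sigma=(1+\rho)/2>\rho$ playing the role of your $\rho'$), followed by iteration down to scale $\varep$ (Lemma \ref{step-2-b}) and a Caccioppoli--Poincar\'e step converting decay of $\average|u_\varep|^2$ into the stated decay of $\average|\nabla u_\varep|^2$. The one point you gloss over is the phrase ``after a rotation'': writing $\partial\Omega$ near $x_0$ as a graph $x_d=\psi(x')$ requires a change of coordinates $y=Tx$, which replaces $A$ by a conjugated matrix periodic with respect to $T\mathbb{Z}^d$ rather than $\mathbb{Z}^d$, so the compactness machinery (which needs a common period lattice for the whole sequence of coefficient matrices) does not apply as written. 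The paper resolves this by approximating the rotation by an orthogonal matrix with rational entries, so that the transformed matrix becomes $1$-periodic after a fixed dilation; alternatively one could enlarge your class $\mathcal{D}$ to graphs over arbitrary hyperplanes and leave $A$ untouched, extracting a convergent subsequence of normal directions in the compactness step. Apart from this technical point, your argument --- the Bogovskii-based Caccioppoli inequality, the pressure bound via $\|p_k-\bar p_k\|_{L^2}\le C\|\nabla p_k\|_{H^{-1}}$, and the use of the boundary $C^{0,\rho'}$ estimate with $\rho'>\rho$ to absorb the uncontrolled constant --- is exactly the paper's.
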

 
 Theorem \ref{main-theorem-2} is also proved by a compactness method, 
 though correctors are not needed here. The scaling-invariant boundary estimate (\ref{boundary-holder}),
  combined with the interior estimates
 in Theorem \ref{main-theorem-1}, allows us to establish the boundary $W^{1,p}$
 estimates for Stokes systems with $V\!M\!O$ coefficients  in $C^1$ domains.
 
 Let $B^{\alpha, q}(\partial \Omega; \mathbb{R}^d)$ denote the Besov space of $\mathbb{R}^d$-valued functions on
 $\partial\Omega$ of order $\alpha \in (0,1) $ with exponent $q\in (1, \infty)$.
 It is known that if $u\in W^{1, q}(\Omega; \mathbb{R}^d)$ for some $1<q<\infty$,
 where $\Omega$ is a bounded Lipschitz domain, then $u|_{\partial\Omega}
 \in B^{1-\frac{1}{q}, q}(\partial \Omega; \mathbb{R}^d)$.
 
 \begin{theorem}\label{main-theorem-3}
Let $\Omega$ be a bounded $C^1$ domain in $\mathbb{R}^d$ and $1<q<\infty$. 
Suppose that $A$ satisfies conditions (\ref{ellipticity}) and (\ref{periodicity}).
Also assume that $A\in V\!M\!O (\mathbb{R}^d)$.
Let $f=(f_i^\alpha)\in L^q(\Omega;\mathbb{R}^{d\times d})$, 
$g \in L^q(\Omega)$ and $h \in B^{1-\frac{1}{q},q}(\partial\Omega;\mathbb{R}^d)$ satisfy the 
compatibility condition
\begin{equation}\label{comp}
\int_\Omega g -\int_{\partial\Omega} h \cdot n=0,
\end{equation}
where $n$ denotes the outward unit normal to $\partial\Omega$.
Then the solutions $(u_\varepsilon, p_\varep)$ in $W^{1,q}(\Omega;\mathbb{R}^d)$ to Dirichlet problem
\begin{equation}\label{DP}
\left\{
\aligned
 \mathcal{L}_\varepsilon(u_\varepsilon) +\nabla p_\varep
 &= \text{\rm div}(f) &\quad & \text{ in }\Omega,\\
 \text{\rm div}(u_\varepsilon) &  =g &\quad &  \text{ in }\Omega,\\
 u_\varepsilon &= h & \quad & \text{ on }\partial\Omega,
\endaligned
\right.
\end{equation}
satisfy the estimate
\begin{equation}\label{W-1-p}
 \|\nabla u_\varepsilon\|_{L^q(\Omega)} 
+\| p_\varep -\average_\Omega p_\varep \|_{L^q(\Omega)}\\
 \le C_q \left\{\|f\|_{L^q(\Omega)}+\|g_\varepsilon\|_{L^q(\Omega)}
+\|h\|_{B^{1-\frac{1}{q},q}(\partial\Omega)}\right\},
\end{equation}
where $C_q$ depends only on $d$, $q$, $A$ and $\Omega$.
\end{theorem}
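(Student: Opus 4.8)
\emph{Proof strategy.} The plan is to derive Theorem~\ref{main-theorem-3} from the interior estimates of Theorem~\ref{main-theorem-1}, the boundary estimate of Theorem~\ref{main-theorem-2}, and Shen's real-variable argument for $L^p$ estimates.

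\textbf{Step 1: Reductions.} Since $\Omega$ is a bounded $C^1$ (hence Lipschitz) domain, the trace operator $W^{1,q}(\Omega;\mathbb{R}^d)\to B^{1-1/q,q}(\partial\Omega;\mathbb{R}^d)$ has a bounded right inverse, and the divergence equation $\operatorname{div}\psi=\phi$ (for $\phi\in L^q(\Omega)$ with $\int_\Omega\phi=0$) is solvable in $W^{1,q}_0(\Omega;\mathbb{R}^d)$ by the Bogovskii operator. Using these together with the compatibility condition \eqref{comp}, one constructs $u_0\in W^{1,q}(\Omega;\mathbb{R}^d)$ with $\operatorname{div}(u_0)=g$, $u_0=h$ on $\partial\Omega$, and $\|u_0\|_{W^{1,q}(\Omega)}\le C(\|g\|_{L^q}+\|h\|_{B^{1-1/q,q}(\partial\Omega)})$; replacing $u_\varepsilon$ by $u_\varepsilon-u_0$ reduces matters to the case $g=0$, $h=0$, with a modified right-hand side $\operatorname{div}(\tilde f)$, $\tilde f\in L^q(\Omega)$. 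The pressure estimate then reduces to the velocity estimate, because $\nabla p_\varepsilon=\operatorname{div}(\tilde f+A(x/\varepsilon)\nabla u_\varepsilon)$ and the Ne\v{c}as inequality $\|p_\varepsilon-\average_\Omega p_\varepsilon\|_{L^q(\Omega)}\le C_q\|\nabla p_\varepsilon\|_{W^{-1,q}(\Omega)}$ on bounded Lipschitz domains gives $\|p_\varepsilon-\average_\Omega p_\varepsilon\|_{L^q}\le C(\|\tilde f\|_{L^q}+\|\nabla u_\varepsilon\|_{L^q})$. Finally, since $A^*$ also satisfies \eqref{ellipticity}, \eqref{periodicity} and lies in $V\!M\!O$, the case $1<q<2$ follows by duality from the case $q'=q/(q-1)>2$ applied to the adjoint Stokes system, and $q=2$ is the standard energy estimate. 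Thus it suffices to prove the a priori bound $\|\nabla u_\varepsilon\|_{L^q(\Omega)}\le C_q\|\tilde f\|_{L^q(\Omega)}$, uniformly in $\varepsilon$, for $q>2$ and solutions with $\operatorname{div}(u_\varepsilon)=0$, $u_\varepsilon|_{\partial\Omega}=0$.

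\textbf{Step 2: The real-variable scheme.} Fix $q>2$ and choose any $p>q$. I would apply Shen's real-variable $L^p$ lemma on $\Omega$ with $F:=|\nabla u_\varepsilon|\in L^2(\Omega)$ and datum $|\tilde f|\in L^q(\Omega)$. For a ball $B(y_0,r)$, $y_0\in\overline{\Omega}$, with $r$ below a fixed multiple of $\operatorname{diam}(\Omega)$, put $y_0'=y_0$ if $\operatorname{dist}(y_0,\partial\Omega)\ge 4r$ and let $y_0'\in\partial\Omega$ be a nearest boundary point otherwise, and write $D_s=\Omega\cap B(y_0',s)$. Let $v\in W^{1,2}_0(D_{4r};\mathbb{R}^d)$ solve $\mathcal{L}_\varepsilon(v)+\nabla\pi=\operatorname{div}(\tilde f)$, $\operatorname{div}(v)=0$ in $D_{4r}$, and set $w=u_\varepsilon-v$, which solves the homogeneous Stokes system in $D_{4r}$ with $w=0$ on $\partial\Omega\cap B(y_0',4r)$. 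The $L^2$ theory for Stokes systems with bounded measurable elliptic coefficients on the domains $D_{4r}$, whose Lipschitz characters (hence inf--sup and Korn constants) are uniform for small $r$ since $\partial\Omega\in C^1$, gives $\big(\average_{D_{2r}}|\nabla v|^2\big)^{1/2}\le C\big(\average_{D_{4r}}|\tilde f|^2\big)^{1/2}$; thus $|\nabla v|$ supplies the part of $F$ controlled purely by the data (so the smallness parameter of the scheme is zero). The only missing input is the reverse H\"older estimate
\[
\Big(\average_{D_{2r}}|\nabla w|^p\Big)^{1/p}\le C\Big(\average_{D_{4r}}|\nabla w|^2\Big)^{1/2},\qquad C\ \text{independent of}\ \varepsilon,
\]
for the homogeneous part; with it, $|\nabla w|$ is the $L^p$-bounded part, and the lemma yields $\|\nabla u_\varepsilon\|_{L^q(\Omega)}\le C\big(\|\nabla u_\varepsilon\|_{L^2(\Omega)}+\|\tilde f\|_{L^q(\Omega)}\big)$.

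\textbf{Step 3: Reverse H\"older for the homogeneous system.} This is the heart of the proof, and I would establish it at all scales by combining two regimes. For scales $\ge\varepsilon$: in the interior case Theorem~\ref{main-theorem-1} (with $F=0$, $g=0$) bounds $\average_{B(z,t)}|\nabla w|^2$ by $\average_{D_{4r}}|\nabla w|^2$ for $z\in B(y_0,2r)$ and $\varepsilon\le t\le 2r$, giving the interior reverse H\"older inequality for every $p<\infty$ after covering; in the boundary case, Theorem~\ref{main-theorem-2} with the exponent chosen so close to $1$ that $p(1-\rho)<1$ gives, via application at boundary points near $y_0'$ together with the interior estimate, a Morrey-type bound $\average_{\Omega\cap B(z^*,t)}|\nabla w|^2\lesssim (t/r)^{2\rho-2}\average_{D_{4r}}|\nabla w|^2$ along $\partial\Omega$, and a dyadic decomposition of $D_{2r}$ into boundary layers then integrates this to $\int_{D_{2r}}|\nabla w|^p\lesssim r^d\big(\average_{D_{4r}}|\nabla w|^2\big)^{p/2}$ on the set where $\operatorname{dist}(\cdot,\partial\Omega)\ge\varepsilon$. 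For scales $<\varepsilon$: the rescaling $x\mapsto x/\varepsilon$ turns $\mathcal{L}_\varepsilon$ into a fixed divergence-form operator with coefficients $A(\,\cdot+c\,)$ that are $V\!M\!O$ with the \emph{same} modulus as $A$, so the interior and boundary $W^{1,p}$ estimates for Stokes systems with $V\!M\!O$ coefficients in $C^1$ domains at unit scale (of Caffarelli--Peral / Byun--Wang type) control the sub-$\varepsilon$ part with $\varepsilon$-independent constants. Combining the two regimes yields the reverse H\"older estimate of Step~2.

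\textbf{Step 4: Conclusion.} Steps~2 and~3 give $\|\nabla u_\varepsilon\|_{L^q(\Omega)}\le C(\|\nabla u_\varepsilon\|_{L^2(\Omega)}+\|\tilde f\|_{L^q(\Omega)})$, and the energy estimate $\|\nabla u_\varepsilon\|_{L^2(\Omega)}\le C\|\tilde f\|_{L^2(\Omega)}\le C|\Omega|^{\frac12-\frac1q}\|\tilde f\|_{L^q(\Omega)}$ absorbs the first term, so $\|\nabla u_\varepsilon\|_{L^q}\le C\|\tilde f\|_{L^q}$ for $q>2$, hence for all $1<q<\infty$ by Step~1; adding the pressure bound and undoing the reduction of Step~1 gives \eqref{W-1-p}. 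For existence and uniqueness in $W^{1,q}$: the unique $W^{1,2}$ solution (which exists by the $L^2$ theory under \eqref{comp}) has $L^q$ gradient for $q>2$ by the a priori estimate applied to smooth approximations of the data; for $1<q<2$ one solves with smooth data and passes to the limit using the uniform bound; uniqueness in both cases follows from the a priori estimate applied to the homogeneous problem. The main obstacle is Step~3: upgrading the mere H\"older decay of Theorem~\ref{main-theorem-2} (exponent $\rho<1$) into $L^p$ control for arbitrarily large $p$, and matching the homogenization regularity valid for scales $\gtrsim\varepsilon$ with the $V\!M\!O$ perturbation theory valid for scales $\lesssim\varepsilon$, all uniformly in $\varepsilon$.
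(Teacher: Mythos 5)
Your proposal is correct and follows the same overall architecture as the paper's proof: reduce to $g=0$, $h=0$ by an extension of $h$ and a divergence-equation solve, recover the pressure from the velocity via the Ne\v{c}as inequality $\|p_\varep-\average_\Omega p_\varep\|_{L^q}\le C\|\nabla p_\varep\|_{W^{-1,q}}$, treat $1<q<2$ by duality with the adjoint system, and run the real-variable argument with the decomposition $u_\varep=v+w$, where the reverse H\"older inequality for the homogeneous part $w$ is obtained by splicing the large-scale homogenization estimates (Theorems \ref{main-theorem-1} and \ref{main-theorem-2}) with the unit-scale $V\!M\!O$/$C^1$ theory below scale $\varep$. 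The one place where you genuinely deviate is the boundary reverse H\"older inequality (the paper's Lemma \ref{lemma-8.2}): you integrate the Morrey-type decay $\average_{\Omega\cap B(z^*,t)}|\nabla w|^2\lesssim (t/r)^{2\rho-2}\average_{D_{4r}}|\nabla w|^2$ directly over dyadic boundary layers, summing a geometric series that converges precisely when $(1-\rho)p<1$. The paper instead first converts Theorem \ref{main-theorem-2} together with the rescaled sub-$\varep$ estimate into a pointwise H\"older bound $|u_\varep(x)|\le C\,[\delta(x)]^\rho\big(\average_{B\cap\Omega}|u_\varep|^2\big)^{1/2}$ (Lemma \ref{lemma-8.0}) and then uses the interior $W^{1,q}$ estimate in the Hardy-type form $\int_{B(0,1/2)\cap\Omega}|\nabla u_\varep|^q\le C\int_{B(0,3/4)\cap\Omega}|u_\varep/\delta|^q$, which is integrable under the identical condition $(1-\rho)q<1$. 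The two routes are equivalent in substance and hinge on the same numerical threshold; yours avoids the intermediate pointwise estimate on $u_\varep$ at the cost of dyadic covering bookkeeping, and both must handle the boundary layer of thickness $\varep$ by the rescaled $V\!M\!O$ boundary estimate (the paper's Lemma \ref{lemma-7.2}), exactly as you indicate.
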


The proof of Theorem \ref{main-theorem-3} is given in Sections 7 and 8.
We mention that $W^{1, p}$ estimates for elliptic and parabolic equations with continuous or
$V\!M\!O$ coefficients have been studied extensively in recent years. We refer the reader to \cite {
CP-1998, Byun-Wang-2004, Shen-2005, Krylov-2007, Kim-Krylov-2007,
Byun-Wang-2008, Geng-2012} as well as their references for work on elliptic equations and systems, 
and  to \cite { AL-1987, AL-1991, CP-1998, Shen-2008, KLS1, song-2012, Geng-Shen-2014} 
for uniform $W^{1,p}$ estimates in homogenization.

We end this section with some notations and observations.
We will use $\average_E f =\frac{1}{|E|} \int_E f $ to denote the $L^1$ average of $f$ over the set $E$.
We will use $C$ to denote constants that may depend on $d$, $A$, or $\Omega$, but never on $\varep$.
Note that our assumptions on $A$ are invariant under translation.
Finally, the technique of rescaling (or dilation) will be used routinely in the rest of the paper.
For this, we record that if $(u_\varep, p_\varep)$ is a solution of (\ref{Stokes}) and
$v(x)=u_\varep (rx)$, then
\begin{equation}\label{rescaling}
\left\{
\aligned
\mathcal{L}_{\frac{\varep}{r}} (v)+\nabla \pi  & =G,\\
\text{\rm div}(v) &=h,
\endaligned
\right.
\end{equation}
where
\begin{equation}\label{rescaling-1}
\pi (x)=r p_\varep (rx), \quad h(x)= r g(rx), \quad \text{ and } \quad G(x)=r^2 F(rx).
\end{equation}
 
 \noindent{\bf Acknowledgement.}
 Both authors would like to thank the anonymous referees for their very helpful comments and suggestions.



\section{Homogenization Theorems and Compactness}
\setcounter{equation}{0}

In this section we give a review of homogenization theory of
the Stokes systems with periodic coefficients. We refer the reader to \cite[pp.76-81]{bensoussan-1978}
for a detailed presentation. We also prove a compactness theorem for a sequence of 
Stokes systems with (periodic) coefficient matrices satisfying the ellipticity condition (\ref{ellipticity})
with the same $\mu$.

Let $\Omega$ be a bounded Lipschitz domain in $\mathbb{R}^d$.
For $u, v \in H^1(\Omega;\mathbb{R}^d)$, we set
\begin{equation}\label{a}
a_\varepsilon(u,v)=\int_\Omega a_{ij}^{\alpha\beta}\Bigl(\frac{x}
{\varepsilon}\Bigr)\frac{\partial u^\beta}{\partial x_j}\frac{\partial v^\alpha}{\partial x_i}\, dx.
\end{equation}
For $F\in H^{-1}(\Omega;\mathbb{R}^d)$ and $g\in L^2(\Omega)$,
 we say that $(u_\varepsilon,p_\varepsilon)\in H^1(\Omega;\mathbb{R}^d)\times L^2(\Omega)$ 
 is a weak solution of the Stokes system (\ref{Stokes}) in $\Omega$, if div$(u_\varep)=g$ in $\Omega$ and 
 for any $\varphi \in C_0^1(\Omega;\mathbb{R}^d)$,
\begin{equation*}
a_\varepsilon(u_\varepsilon, \varphi)
-\int_\Omega p_\varep\,  \text{\rm div} (\varphi) = \langle F, \varphi \rangle.
\end{equation*}

\begin{theorem}\label{theorem-2.1}
Let $\Omega$ be a bounded Lipschitz domain in $\mathbb{R}^d$. 
Suppose  $A$ satisfies the ellipticity condition (\ref{ellipticity}). 
Let $F\in H^{-1}(\Omega;\mathbb{R}^d)$,  $g\in L^2(\Omega)$ and $h\in H^{1/2}(\partial\Omega;
\mathbb{R}^d)$ satisfy the compatibility condition (\ref{comp}).
Then there exist a unique $u_\varep \in H^1(\Omega; \mathbb{R}^d)$
and $p_\varep \in L^2(\Omega)$ (unique up to constants) such that 
$(u_\varep, p_\varep)$ is a weak solution of
(\ref{Stokes}) in $\Omega$ and $u_\varep =h$ on $\partial\Omega$.
Moreover, 
\begin{equation}\label{weak-solution-estimate}
\| u_\varepsilon\|_{H^1(\Omega)} +\| p_\varep -\average_\Omega p_\varep\|_{L^2(\Omega)}
\le C \Big\{ \| F \|_{H^{-1}(\Omega)}+ 
\| h \|_{H^{1/2}(\partial\Omega)} + \| g \|_{L^2(\Omega)}\Big\},
\end{equation}
where $C$ depends only on $d$, $\mu$, and $\Omega$.
\end{theorem}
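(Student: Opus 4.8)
The plan is to reduce the boundary value problem to a coercive variational problem posed on divergence-free vector fields, solve that by Lax--Milgram, and then recover the pressure by the de Rham / Ne\v{c}as argument. First I would dispose of the inhomogeneous data: extend $h$ to some $H\in H^1(\Omega;\mathbb{R}^d)$ with $\|H\|_{H^1(\Omega)}\le C\|h\|_{H^{1/2}(\partial\Omega)}$; by the compatibility condition (\ref{comp}) the function $g-\mathrm{div}(H)$ has zero mean over $\Omega$, so by the solvability of the divergence equation on a bounded Lipschitz domain (the Bogovskii operator) there is $w_0\in H^1_0(\Omega;\mathbb{R}^d)$ with $\mathrm{div}(w_0)=g-\mathrm{div}(H)$ and $\|w_0\|_{H^1_0(\Omega)}\le C\|g-\mathrm{div}(H)\|_{L^2(\Omega)}$. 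Setting $w=H+w_0$ gives $\mathrm{div}(w)=g$ in $\Omega$, $w=h$ on $\partial\Omega$, and $\|w\|_{H^1(\Omega)}\le C\{\|g\|_{L^2(\Omega)}+\|h\|_{H^{1/2}(\partial\Omega)}\}$. It then remains to produce $v_\varep\in V:=\{v\in H^1_0(\Omega;\mathbb{R}^d):\mathrm{div}(v)=0\}$ and $p_\varep\in L^2(\Omega)$ with $a_\varep(v_\varep,\varphi)-\int_\Omega p_\varep\,\mathrm{div}(\varphi)=\langle F,\varphi\rangle-a_\varep(w,\varphi)$ for all $\varphi\in C_0^1(\Omega;\mathbb{R}^d)$, after which $u_\varep=v_\varep+w$ is the desired solution.

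For the velocity I would work on the Hilbert space $V$. The form $a_\varep$ is bounded there, $|a_\varep(u,v)|\le\mu^{-1}\|\nabla u\|_{L^2(\Omega)}\|\nabla v\|_{L^2(\Omega)}$, and coercive: by the (Legendre) ellipticity (\ref{ellipticity}), $a_\varep(v,v)\ge\mu\int_\Omega|\nabla v|^2$, which together with the Poincar\'e inequality on $H^1_0(\Omega)$ controls $\|v\|_{H^1(\Omega)}^2$ from below; note that strong ellipticity makes Korn's inequality unnecessary here. Since $\varphi\mapsto\langle F,\varphi\rangle-a_\varep(w,\varphi)$ is a bounded functional on $V$ of norm at most $C\{\|F\|_{H^{-1}(\Omega)}+\|w\|_{H^1(\Omega)}\}$, the Lax--Milgram theorem yields a unique $v_\varep\in V$ with $a_\varep(v_\varep,\varphi)=\langle F,\varphi\rangle-a_\varep(w,\varphi)$ for all $\varphi\in V$ (using that $C_0^1(\Omega;\mathbb{R}^d)\cap V$ is dense in $V$), and $\|v_\varep\|_{H^1(\Omega)}\le C\{\|F\|_{H^{-1}(\Omega)}+\|w\|_{H^1(\Omega)}\}$.

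To recover the pressure, I would consider the bounded linear functional $L(\varphi)=\langle F,\varphi\rangle-a_\varep(w,\varphi)-a_\varep(v_\varep,\varphi)$ on $H^1_0(\Omega;\mathbb{R}^d)$; by the previous step it vanishes on $V=\ker(\mathrm{div}\colon H^1_0(\Omega;\mathbb{R}^d)\to L^2(\Omega))$. De Rham's theorem then provides $p_\varep\in L^2(\Omega)$, unique up to an additive constant, with $L(\varphi)=-\int_\Omega p_\varep\,\mathrm{div}(\varphi)$ for every $\varphi\in H^1_0(\Omega;\mathbb{R}^d)$, which is precisely the remaining weak equation. The pressure bound comes from the Ne\v{c}as inequality (equivalently, surjectivity of $\mathrm{div}$ onto the mean-zero subspace of $L^2(\Omega)$ on bounded Lipschitz domains): $\|p_\varep-\average_\Omega p_\varep\|_{L^2(\Omega)}\le C\sup_{\varphi\ne0}|L(\varphi)|/\|\varphi\|_{H^1_0(\Omega)}\le C\{\|F\|_{H^{-1}(\Omega)}+\|w\|_{H^1(\Omega)}+\|v_\varep\|_{H^1(\Omega)}\}$. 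Combining this with the estimates for $w$ and $v_\varep$ gives (\ref{weak-solution-estimate}). For uniqueness, if $(u,p)$ solves the problem with $F=0$, $g=0$, $h=0$, then $u\in V$ and testing with $\varphi=u$ forces $a_\varep(u,u)=0$, hence $\nabla u=0$ and $u\equiv0$; then $\int_\Omega p\,\mathrm{div}(\varphi)=0$ for all $\varphi\in H^1_0(\Omega;\mathbb{R}^d)$, so $p$ is constant.

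The step I expect to be the main obstacle is the functional-analytic input on Lipschitz domains that is used twice above: solvability of $\mathrm{div}(w_0)=f$ for mean-zero $f\in L^2(\Omega)$ with $w_0\in H^1_0(\Omega;\mathbb{R}^d)$ and $\|w_0\|_{H^1_0(\Omega)}\le C\|f\|_{L^2(\Omega)}$, and the equivalent Ne\v{c}as inequality $\|q-\average_\Omega q\|_{L^2(\Omega)}\le C\|\nabla q\|_{H^{-1}(\Omega)}$ underlying both the de Rham recovery of $p_\varep$ and its estimate. These are classical but genuinely nontrivial on a general bounded Lipschitz domain; once they are available, everything else is Lax--Milgram and bookkeeping of constants.
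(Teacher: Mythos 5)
Your proposal is correct and follows essentially the same route as the paper: lift the boundary data, correct the divergence via the Bogovskii/Duran solvability of $\mathrm{div}(w_0)=f$ for mean-zero $f$, and apply Lax--Milgram to $a_\varep$ on the divergence-free subspace of $H^1_0(\Omega;\mathbb{R}^d)$. You additionally spell out the de Rham recovery of $p_\varep$ and the Ne\v{c}as inequality for its estimate, which the paper leaves implicit; this is a complete and accurate filling-in rather than a different argument.
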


\begin{proof}
This theorem is well known and does not use the periodicity condition of $A$.
First, we choose $\widetilde{h}\in H^1(\Omega; \mathbb{R}^d)$ such that
$\widetilde{h}=h $ on $ \partial\Omega$
  and $ \| \widetilde{h}\|_{H^1(\Omega)}\le C\, \| h\|_{H^{1/2} (\partial\Omega)}$.
By considering $u_\varep -\widetilde{h}$,
we may assume that $h=0$.
Next, we choose a function $U(x)$ in $ H^1_0 (\Omega; \mathbb{R}^d)$
such that div$(U)=g$ in $\Omega$ and $\|U\|_{H^1(\Omega)} \le C\, \| g\|_{L^2(\Omega)}$
(see \cite{Duran-2012} for a proof of the existence of such functions).
By considering $u_\varep -U$, we may further assume that $g=0$.
Finally, the case $h=0$ and $g=0$ may be proved
by applying the Lax-Milgram Theorem to the bilinear form $a_\varep (u,v)$
on the Hilbert space
$$
V=\big\{ u\in H^1_0(\Omega; \mathbb{R}^d): \, \text{\rm div} (u)=0  \text{ in } \Omega\big\}.
$$
This completes the proof.
\end{proof}

Let $Y=[0,1)^d$. We denote by $H^1_{\text{per}}(Y;\mathbb{R}^d)$ 
the closure in $H^1(Y;\mathbb{R}^d)$ of $C^\infty_{\text{per}}(Y;\mathbb{R}^d)$, 
the set of $C^\infty$ 1-periodic and  $\mathbb{R}^d$-valued functions in $\mathbb{R}^d$.
Let
\begin{equation*}
a_{\text{per}} (\psi, \phi )=\int_Y a_{ij}^{\alpha\beta}(y)\frac{\partial \psi^\beta}{\partial y_j}
\frac{\partial \phi^\alpha}{\partial y_i}\, dy,
\end{equation*}
where $\phi=(\phi^\alpha)$ and $\psi=(\psi^\alpha)$.
By applying the Lax-Milgram Theorem to the bilinear form $a_{\text{per}} (\psi, \phi)$ on
the Hilbert space
$$
V_{\text{per}}(Y)
=\Big\{ u\in H^1_{\text{per}} (Y; \mathbb{R}^{d}): \, \text{div} (u)=0 \text{ in } Y  \text{ and } \int_Y u=0 \Big\},
$$
it follows that  for each $1\le j, \beta\le d$,
 there exists a unique $\chi_j^\beta \in V_{\text{per}}(Y)$ such that
 $$
 a_{\text{per}} (\chi_j^\beta, \phi) =-a_{\text{per}}
 (P_j^\beta, \phi) \qquad \text{ for any } \phi \in V_{\text{per}}(Y),
 $$
 where $P_j^\beta=P_j^\beta(y)=y_je^\beta=y_j(0,...,1,...,0)$ with 1 in the $\beta^{th}$ position.
 As a result, there exist 1-periodic functions $(\chi_j^\beta, \pi_j^\beta)
 \in H^1_{\text{loc}}(\mathbb{R}^d; \mathbb{R}^d) \times L^2_{\text{loc}}(\mathbb{R}^d)$,
 which are called the correctors for the Stokes system (\ref{Stokes}),
 such that
\begin{equation}\label{corrector}
\left\{
\aligned
\mathcal{L}_1(\chi_j^\beta +P_j^\beta) +\nabla \pi_j^\beta  &= 0 \ \text{ in } \mathbb{R}^d,\\
 \text{ div} (\chi_j^\beta) & =0 \ \text{ in }\mathbb{R}^d,\\
 \int_Y \pi_j^\beta=0 \text{ and }  \int_Y \chi_j^\beta & =0.
\endaligned
\right.
\end{equation}
Note that
\begin{equation}\label{corrector-estimate}
\| \chi_j^\beta \|_{H^1(Y)} +\|\pi_j^\beta\|_{L^2(Y)} \le C,
\end{equation}
where $C$ depends only on $d$ and $\mu$.
Let $\widehat{A}=\big(\widehat{a}_{ij}^{\alpha\beta}\big)$, where
\begin{equation}\label{homo-coefficient}
\widehat a_{ij}^{\alpha\beta}=a_{\text{per}} \big(\chi_j^\beta+P_j^\beta, \chi_i^\alpha+P_i^\alpha\big).
\end{equation}
The homogenized system for the Stokes system (\ref{Stokes}) is given by
\begin{equation}\label{homo-system}
\left\{
\aligned
\mathcal{L}_0 (u_0) +\nabla p_0  & = F,\\
\text{\rm div} (u_0) & =g,
\endaligned
\right.
\end{equation}
where $\mathcal{L}_0 =-\text{\rm div} (\widehat{A}\nabla )$ is
a second-order elliptic operator with constant coefficients.

\begin{remark}
{\rm
The homogenized matrix $\widehat{A}$ satisfies the ellipticity condition
\begin{equation}\label{ellipticity-1}
\mu |\xi|^2 \le \widehat{a}_{ij}^{\alpha\beta} \xi_i^\alpha \xi_j^\beta
\le \mu_1 |\xi|^2
\end{equation}
for any $\xi =(\xi_i^\alpha) \in \mathbb{R}^{d\times d}$, where $\mu_1$ depends only on $d$ and $\mu$.
The upper bound is a consequence of the estimate
$\|\nabla \chi_j^\beta\|_{L^2(Y)} \le C(d, \mu)$,
while the lower bound follows from
$$
\aligned
\widehat{a}_{ij}^{\alpha\beta}\xi_i^\alpha \xi_j^\beta
&=a_{\text{per}} \big( (\chi_j^\beta +P_j^\beta)\xi_j^\beta, (\chi_i^\alpha +P_i^\alpha)\xi_i^\alpha\big)\\
&\ge \mu \int_Y |\nabla (\chi_i^\alpha +P_i^\alpha)\xi_i^\alpha|^2\\
&\ge \mu| \xi|^2.
\endaligned
$$
}
\end{remark}

\begin{remark}\label{adjoint-remark}
{\rm
Let $\chi^* =(\chi_j^{*\beta})$ denote the matrix of correctors for the system (\ref{Stokes}), with $A$ replaced
by its adjoint $A^*$.
Note that by definition,
$\chi_j^{*\beta}\in V_{\text{per}}(Y)$ and
$$
a_{\text{per}}^*(\chi_j^{*\beta}, \phi)= -a_{\text{per}}^*(P_j^\beta, \phi) \qquad \text{ for any } \phi \in V_{\text{per}}(Y),
$$
where $a_{\text{per}} ^*(\psi, \phi)=a_{\text{per}} (\phi, \psi)$.
It follows that \begin{equation}\label{adjoint-form}
\aligned
\widehat{a}_{ij}^{\alpha\beta}
& =a_{\text{per}}\big(\chi_j^\beta +P_j^\beta, \chi_i^\alpha +P_i^\alpha\big)
=a_{\text{per}}\big(\chi_j^\beta +P_j^\beta, P_i^\alpha\big)\\
&=a_{\text{per}}\big(\chi_j^\beta +P_j^\alpha, \chi_i^{*\alpha} +P_i^\alpha\big)
=a^*_{\text{per}} \big(\chi_i^{*\alpha} +P_i^\alpha, \chi_j^{\beta} +P_j^\beta\big)\\
&=a^*_{\text{per}} \big(\chi_i^{*\alpha} +P_i^\alpha, P_j^\beta\big)
=a^*_{\text{per}} \big(\chi_i^{*\alpha} +P_i^\alpha, \chi_j^{*\beta} +P_j^\beta\big).
\endaligned
\end{equation}
This, in particular, shows that $\big( \widehat{A} \big)^* =\widehat{A^*}$.
}
\end{remark}

\begin{theorem}\label{homogenization-theorem}
Suppose that $A(y)$ satisfies conditions (\ref{ellipticity}) and (\ref{periodicity}).
 Let $\Omega$ be a bounded Lipschitz domain.
  Let $(u_\varepsilon, p_\varep) \in H^1(\Omega;\mathbb{R}^d)\times L^2(\Omega)$ 
  be a  weak solution of
$$
\left \{
\aligned
 \mathcal{L}_\varepsilon(u_\varepsilon) +\nabla p_\varep & = F &\quad &\text{ in } \Omega, \\
 \text{\rm div} (u_\varepsilon) & = g &\quad &\text{ in } \Omega,\\
 u_\varepsilon  &= h &\quad & \text{ on } \partial \Omega,
\endaligned
\right .
$$
where $F\in H^{-1}(\Omega; \mathbb{R}^d)$, $g\in L^2(\Omega)$ and $h\in H^{1/2}(\partial\Omega; \mathbb{R}^d)$.
Assume that $\int_\Omega p_\varep =0$.
Then as $\varepsilon\rightarrow 0$,
\begin{equation*}
\left \{
\aligned
u_\varepsilon & \rightarrow u_0 \text{ strongly in }L^2(\Omega;\mathbb{R}^d),\\
u_\varepsilon & \rightharpoonup u_0 \text{ weakly in }H^1(\Omega;\mathbb{R}^d),\\
p_\varepsilon & \rightharpoonup p_0 \text{ weakly in }L^2(\Omega),\\
A(x/\varep)\nabla u_\varep & \rightharpoonup \widehat{A}\nabla u_0 \text{ weakly in } L^2(\Omega; \mathbb{R}^{d\times d}).
\endaligned
\right.
\end{equation*}
Moreover,  $(u_0, p_0)$ is the weak solution of the homogenized problem
$$
\left \{
\aligned
 \mathcal{L}_0(u_0) +\nabla p_0  &= F &\quad&\text{ in } \Omega, \\
 \text{\rm div} (u_0)  & = g &\quad & \text{ in } \Omega,\\
 u_0  & = h &\quad & \text{ on } \partial \Omega.
\endaligned
\right.
$$
\end{theorem}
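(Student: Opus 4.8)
The plan is to run Tartar's method of oscillating test functions, adapted to handle the pressure. By the uniform a priori bound (\ref{weak-solution-estimate}) of Theorem \ref{theorem-2.1}, and since $\int_\Omega p_\varepsilon=0$, the family $(u_\varepsilon,p_\varepsilon)$ is bounded in $H^1(\Omega;\mathbb{R}^d)\times L^2(\Omega)$, and $A(x/\varepsilon)\nabla u_\varepsilon$ is bounded in $L^2(\Omega;\mathbb{R}^{d\times d})$ by (\ref{ellipticity}). Passing to a subsequence, $u_\varepsilon\rightharpoonup u_*$ weakly in $H^1$ (hence $u_\varepsilon\to u_*$ strongly in $L^2$ by Rellich, with trace $u_*=h$ on $\partial\Omega$), $p_\varepsilon\rightharpoonup p_*$ weakly in $L^2$ with $\int_\Omega p_*=0$, and $A(x/\varepsilon)\nabla u_\varepsilon\rightharpoonup\xi_0$ weakly in $L^2$. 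Letting $\varepsilon\to0$ in $\text{\rm div}(u_\varepsilon)=g$ gives $\text{\rm div}(u_*)=g$, and letting $\varepsilon\to0$ in $a_\varepsilon(u_\varepsilon,\varphi)-\int_\Omega p_\varepsilon\,\text{\rm div}(\varphi)=\langle F,\varphi\rangle$ with a fixed $\varphi\in C_0^1(\Omega;\mathbb{R}^d)$ yields $\int_\Omega\xi_0:\nabla\varphi-\int_\Omega p_*\,\text{\rm div}(\varphi)=\langle F,\varphi\rangle$. Once we prove $\xi_0=\widehat A\nabla u_*$, the pair $(u_*,p_*)$ solves the homogenized problem; since $\mathcal{L}_0$ is elliptic by (\ref{ellipticity-1}), the uniqueness part of Theorem \ref{theorem-2.1} forces $(u_*,p_*)=(u_0,p_0)$, and as the limit does not depend on the subsequence, the stated convergences hold along the full family $\varepsilon\to0$.

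To identify the flux, fix $1\le k,\gamma\le d$ and set $w_\varepsilon^{k\gamma}(x)=P_k^\gamma(x)+\varepsilon\chi_k^{*\gamma}(x/\varepsilon)$, where $(\chi^*,\pi^*)$ are the correctors for $A^*$ (Remark \ref{adjoint-remark}). Rescaling the corrector system (\ref{corrector}) written for $A^*$, the pair $(w_\varepsilon^{k\gamma},\pi_k^{*\gamma}(x/\varepsilon))$ solves $\mathcal{L}_\varepsilon^*(w_\varepsilon^{k\gamma})+\nabla\big(\pi_k^{*\gamma}(x/\varepsilon)\big)=0$ in $\mathbb{R}^d$, with $\text{\rm div}(w_\varepsilon^{k\gamma})=\delta_{k\gamma}$. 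From the weak convergence of periodic functions and (\ref{corrector-estimate}): $w_\varepsilon^{k\gamma}\to P_k^\gamma$ strongly in $L^2_{\rm loc}$, $\pi_k^{*\gamma}(x/\varepsilon)\rightharpoonup0$ weakly in $L^2_{\rm loc}$ (since $\int_Y\pi_k^{*\gamma}=0$), and $A^*(x/\varepsilon)\nabla w_\varepsilon^{k\gamma}\rightharpoonup\widehat{A^*}\nabla P_k^\gamma$ weakly in $L^2_{\rm loc}$, using the definition (\ref{homo-coefficient}) of the homogenized matrix (for $A^*$) together with (\ref{corrector}), where $(\widehat A)^*=\widehat{A^*}$ by Remark \ref{adjoint-remark}.

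Now fix a scalar $\psi\in C_0^\infty(\Omega)$. I would use $\psi\,w_\varepsilon^{k\gamma}\in C_0^1(\Omega;\mathbb{R}^d)$ as a test function in the equation for $(u_\varepsilon,p_\varepsilon)$, use $\psi\,u_\varepsilon\in H_0^1(\Omega;\mathbb{R}^d)$ as a test function in the equation for $(w_\varepsilon^{k\gamma},\pi_k^{*\gamma}(\cdot/\varepsilon))$, and subtract. The two ``symmetric'' interior integrals $\int_\Omega\psi\,a_{ij}^{\alpha\beta}(x/\varepsilon)\,\partial_j u_\varepsilon^\beta\,\partial_i w_\varepsilon^{k\gamma,\alpha}$ cancel, and what survives consists only of: integrals of the form $(\nabla\psi)\times(\text{weakly convergent})\times(\text{strongly convergent})$, to which weak$\times$strong convergence applies; the pressure term $-\int_\Omega p_\varepsilon\,\text{\rm div}(\psi\,w_\varepsilon^{k\gamma})=-\int_\Omega p_\varepsilon\big(\psi\,\delta_{k\gamma}+w_\varepsilon^{k\gamma}\cdot\nabla\psi\big)$; the pressure term $\int_\Omega\pi_k^{*\gamma}(x/\varepsilon)\,\text{\rm div}(\psi\,u_\varepsilon)=\int_\Omega\pi_k^{*\gamma}(x/\varepsilon)\big(\psi\,g+u_\varepsilon\cdot\nabla\psi\big)$, which tends to $0$ because $\pi_k^{*\gamma}(\cdot/\varepsilon)\rightharpoonup0$ weakly and $u_\varepsilon\to u_*$ strongly; and $\langle F,\psi\,w_\varepsilon^{k\gamma}\rangle\to\langle F,\psi\,P_k^\gamma\rangle$. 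Comparing the resulting limit identity with the identity of the first paragraph evaluated at the particular test function $\varphi=\psi\,P_k^\gamma$, the terms carrying the factor $P_k^\gamma$ and the remaining pressure terms coincide and cancel, leaving $\int_\Omega\psi\,(\xi_0)_k^\gamma=\int_\Omega\psi\,(\widehat A\nabla u_*)_k^\gamma$ for every $\psi\in C_0^\infty(\Omega)$ (after an integration by parts using that $\widehat A$ is constant). Hence $\xi_0=\widehat A\nabla u_*$, which completes the identification and, with the first paragraph, the proof.

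The main obstacle I anticipate is the bookkeeping in the subtraction of the two weak formulations: unlike the classical second-order case, the Stokes pressure produces the two extra terms displayed above, and one must check carefully, using $\text{\rm div}(\chi_k^{*\gamma})=0$, $\int_Y\pi_k^{*\gamma}=0$, and the weak$\times$strong structure, that these either vanish in the limit or reproduce precisely the pressure term already present in the limiting identity, so that the comparison genuinely isolates $\xi_0=\widehat A\nabla u_*$. A minor additional point is to justify that $(w_\varepsilon^{k\gamma},\pi_k^{*\gamma}(\cdot/\varepsilon))$ may legitimately be tested against $H_0^1(\Omega;\mathbb{R}^d)$ functions, which holds because it solves the corresponding system on all of $\mathbb{R}^d$.
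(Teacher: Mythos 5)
Your proposal is correct and follows essentially the same route as the paper: the paper defers Theorem \ref{homogenization-theorem} to Tartar's oscillating test function method and then carries out exactly this computation in full when proving the more general compactness result (Theorem \ref{compactness-theorem}), using the adjoint correctors $\chi_j^{*\beta}$, the pairing of the corrector pressure $\pi_j^{*\beta}(x/\varep)$ (mean zero, hence weakly null) against the strongly convergent $\text{\rm div}(\psi u_\varep)=\nabla\psi\cdot u_\varep+\psi g$, and the comparison with the limit identity at $\varphi=P_j^\beta\psi$. Your "subtract the two weak formulations" framing is just an equivalent bookkeeping of the paper's expansion (\ref{2.2-10}) followed by integration by parts, and the pressure terms are handled identically.
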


We remark that Theorem \ref{homogenization-theorem} is more or less proved in \cite {bensoussan-1978},
using Tartar's testing function method.
Our next theorem extends Theorem \ref{homogenization-theorem} to
a sequence of systems with coefficient matrices satisfying the same conditions
and should be regarded as a compactness property of the Stokes systems with periodic coefficients.
Its proof follows the same line of argument found in \cite{bensoussan-1978} for the proof of Theorem \ref{homogenization-theorem}, and also uses the observation that
if $\{ w_k\}$ is a sequence of 1-periodic functions with $\| w_k\|_{L^2(Y)} \le C$ and $\varep_k \to 0$,
then
\begin{equation}\label{periodic-convergence}
w_k (x/\varep_k) -\average_Y  w_k \rightharpoonup 0 \text{ weakly in } L^2(\Omega)
\end{equation}
as $k\to \infty$.

\begin{theorem}\label{compactness-theorem}
Let $\{ A^k(y)\} $ be a sequence of 1-periodic matrices satisfying the ellipticity condition (\ref{ellipticity}) (with the same $\mu$).
 Let $\Omega$ be a bounded Lipschitz domain
in $\mathbb{R}^d$. Let $(u_k, p_k) \in H^1(\Omega;\mathbb{R}^d)\times L^2(\Omega)$ be a weak solution of
$$
\left \{
\aligned
-\text{\rm div} \big(A^k (x/\varep_k)\nabla u_k ) +\nabla p_k  & = F_k, \\
 \text{\rm div} (u_k)  & = g_k
\endaligned
\right .
$$
in $\Omega$, where $\varepsilon_k \rightarrow 0$, $F_k \in H^{-1} (\Omega; \mathbb{R}^d)$
and $g_k \in L^2(\Omega)$.
We further assume that as $k \rightarrow \infty$,
\begin{equation*}
\left \{
\aligned
F_k & \rightarrow F_0 \text{ strongly in } H^{-1} (\Omega; \mathbb{R}^d),\\
g_k & \rightarrow g_0 \text{ strongly in }L^2(\Omega),\\
u_k & \rightharpoonup u_0 \text{ weakly in } H^1(\Omega;\mathbb{R}^d),\\
p_k & \rightharpoonup p_0 \text{ weakly in }L^2(\Omega),\\
\widehat{A^k}  & \rightarrow A^0,
\endaligned
\right.
\end{equation*}
where $\widehat {A^k}$ is the coefficient matrix of the homogenized system
for the Stokes system with coefficient matrix $A^k(x/\varep)$.
Then, $A^k(x/{\varepsilon_k})\nabla u_k \rightharpoonup  A^0\nabla u_0$ weakly in $L^2(\Omega;\mathbb{R}^{d\times d})$,
and $(u_0, p_0)$ is a weak solution of 
\begin{equation}\label{2.2-1}
\left \{
\aligned
-\text{\rm div} \big (A^0\nabla u_0 \big) +\nabla p_0  &= F_0, \\
 \text{\rm div} (u_0)   & = g_0
\endaligned
\right.
\qquad\qquad
\text{ in } \  \Omega.
\end{equation}
\end{theorem}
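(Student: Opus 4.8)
The plan is to mimic the classical Tartar "oscillating test function" (compensated compactness) argument used for Theorem \ref{homogenization-theorem}, carried out uniformly along the sequence $A^k$. First I would record the a priori bounds: since the $A^k$ all satisfy (\ref{ellipticity}) with the same $\mu$ and the data $F_k, g_k$ converge (hence are bounded), Theorem \ref{theorem-2.1} gives $\|u_k\|_{H^1(\Omega)}+\|p_k-\average_\Omega p_k\|_{L^2(\Omega)}\le C$ uniformly in $k$; after normalizing $\average_\Omega p_k=0$ we may pass to subsequences so that the stated weak limits $u_0, p_0$ exist, and so that $\xi_k := A^k(x/\varep_k)\nabla u_k \rightharpoonup \xi_0$ weakly in $L^2(\Omega;\mathbb{R}^{d\times d})$ for some $\xi_0$. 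Passing to the weak limit in the weak formulation immediately shows $-\operatorname{div}(\xi_0)+\nabla p_0 = F_0$ and $\operatorname{div}(u_0)=g_0$ in $\Omega$. The whole problem therefore reduces to the identification $\xi_0 = A^0\nabla u_0$.

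For the identification I would fix $1\le j,\beta\le d$ and use as test functions the correctors for the \emph{adjoint} systems $(A^k)^*$. Let $(\chi_j^{*\beta,k},\pi_j^{*\beta,k})$ be the correctors from Remark \ref{adjoint-remark} applied to $A^k$, and set $w_k^{*}(x) = P_j^\beta(x) + \varep_k \chi_j^{*\beta,k}(x/\varep_k)$, with associated pressure $\varep_k\pi_j^{*\beta,k}(x/\varep_k)$. By (\ref{corrector-estimate}) (which holds with a constant depending only on $d,\mu$) we have $w_k^{*}\rightharpoonup P_j^\beta$ weakly in $H^1_{loc}$ and, by (\ref{periodic-convergence}), $(A^k)^*(x/\varep_k)\nabla w_k^{*} \rightharpoonup \widehat{(A^k)^*}\,\nabla P_j^\beta$ weakly in $L^2$; combined with $\widehat{A^k}\to A^0$ and $(\widehat{A^k})^*=\widehat{(A^k)^*}$ (again Remark \ref{adjoint-remark}), this limit is $(A^0)^*\nabla P_j^\beta$. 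Now I would test the equation for $(u_k,p_k)$ against $\varphi\, w_k^{*}$ and the equation for $(w_k^{*}, \varep_k\pi_j^{*\beta,k})$ against $\varphi\, u_k$, for $\varphi\in C_0^1(\Omega)$, subtract, and use the symmetry $\int a_{ij}^{\alpha\beta,k}(x/\varep_k)\partial_j u_k^\beta\,\partial_i w_k^{*\alpha} = \int (a^k)^{*}(x/\varep_k)\nabla w_k^{*}\cdot\nabla u_k$ so that the bilinear terms cancel. The pressure contributions must be tracked carefully: the term $\int p_k \operatorname{div}(\varphi w_k^{*})$ pairs the weakly convergent $p_k$ against a product that is only weakly convergent, but $\operatorname{div}(w_k^{*})=0$ kills the dangerous piece and leaves $\int p_k \nabla\varphi\cdot w_k^{*}$, where $w_k^{*}\to P_j^\beta$ \emph{strongly} in $L^2_{loc}$ by Rellich; similarly $\varep_k\pi_j^{*\beta,k}(x/\varep_k)\to 0$ strongly in $L^2_{loc}$, so all pressure-test-function cross terms pass to the limit.

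After passing to the limit in this subtracted identity and using the div-curl structure to handle the one remaining product of two weakly convergent sequences (the term $\int \nabla\varphi \cdot [\xi_k\, w_k^{*} - u_k\,(A^k)^*\nabla w_k^{*}]$, handled via the compensated-compactness / integration-by-parts trick exactly as in \cite{bensoussan-1978}, since $\operatorname{curl}(\nabla u_k)=0$ and $\operatorname{div}(\xi_k)$ is controlled), I expect to obtain $\int_\Omega \varphi\,\xi_0 \cdot \nabla P_j^\beta = \int_\Omega \varphi\,\nabla u_0\cdot (A^0)^*\nabla P_j^\beta$ for all $\varphi$, i.e. $\xi_0^{\,i\alpha}\delta^{(j\beta)} $-components equal $(A^0\nabla u_0)$ in the $(j,\beta)$ direction; letting $j,\beta$ vary gives $\xi_0 = A^0\nabla u_0$ on $\Omega$. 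Finally, the limit $(u_0,p_0)$ solves (\ref{2.2-1}), and since every subsequence has a further subsequence converging to the same limit (the homogenized problem has a unique solution with $\average_\Omega p_0=0$ by Theorem \ref{theorem-2.1}), the full sequence converges. The main obstacle is bookkeeping the pressure terms: one must verify that the only pressure pairings that survive involve a strongly convergent factor (which is where $\operatorname{div}(w_k^{*})=0$ and the $\varep_k$-smallness of the corrector pressure are used), and that no pairing of $p_k$ with a genuinely oscillating test function remains — this is the point where the Stokes structure requires care beyond the scalar elliptic case.
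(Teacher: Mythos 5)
Your proposal follows essentially the same route as the paper: Tartar's oscillating test function method with the adjoint correctors $w_k^{*}=P_j^\beta+\varep_k\chi_j^{k*\beta}(x/\varep_k)$, reduction of the whole statement to identifying the weak limit $\xi_0$ of the fluxes $\xi_k=A^k(x/\varep_k)\nabla u_k$ along a subsequence, and weak--strong pairings (plus $(\widehat{A^k})^*=\widehat{(A^k)^*}$ from Remark \ref{adjoint-remark}) to pass to the limit term by term. One correction is needed in your pressure bookkeeping, which is exactly the point you flagged as delicate. The pressure associated with $w_k^{*}$ is $\pi_j^{k*\beta}(x/\varep_k)$, \emph{not} $\varep_k\pi_j^{k*\beta}(x/\varep_k)$: rescaling the cell problem (\ref{corrector}) gives
$-\text{div}\big((A^k)^*(x/\varep_k)\nabla w_k^{*}\big)=-\nabla\big[\pi_j^{k*\beta}(x/\varep_k)\big]$,
with no extra factor of $\varep_k$ (the factor $\varep_k$ in front of $\chi$ is consumed by the chain rule). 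Consequently this corrector-pressure term does not tend to zero strongly in $L^2_{\rm loc}$; it is only bounded in $L^2$ and, by (\ref{periodic-convergence}), converges weakly to its cell average. The term still vanishes in the limit, but for the reason the paper gives: it is paired against $\text{div}(\varphi u_k)=\nabla\varphi\cdot u_k+\varphi g_k$, which converges \emph{strongly} in $L^2$ (Rellich for $u_k$ and the hypothesis on $g_k$), while $\pi_j^{k*\beta}(x/\varep_k)-\average_Y\pi_j^{k*\beta}\rightharpoonup 0$ weakly. So the weak-times-strong mechanism you invoke is the right one, but the roles of ``weak'' and ``strong'' must be assigned to the opposite factors from what you wrote; as stated, your justification of this step is based on an incorrect scaling and would not survive scrutiny even though the conclusion does. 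A second, cosmetic point: $u_0$ and $p_0$ are the limits of the full sequences by hypothesis, so the subsequence-uniqueness argument at the end is needed only for $\xi_k$, where it follows from the uniqueness of the identified limit $A^0\nabla u_0$ rather than from well-posedness of the homogenized problem.
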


\begin{proof}
Let $A^k = \big( a_{ij}^{k\alpha\beta}\big) $ and
\begin{equation*}
(\xi_k)_i^\alpha = {a}_{ij}^{k\alpha\beta}\left(\frac{x}{\varepsilon_k}\right)\frac{\partial u_k^\beta}{\partial x_j}.
\end{equation*}
Note that
$
\| (\xi_k)_i^\alpha \|_{L^2(\Omega)} \le \mathit{C}
$.
It suffices to show that if $\{ \xi_{k^\prime}\}$ is a subsequence of $\{ \xi_k\}$ and $\xi_{k^\prime}$
converges weakly to $\xi_0$ in $L^2(\Omega; \mathbb{R}^{d\times d})$, then $\xi_0 =A^0\nabla u_0$.
This would imply that $(u_0, p_0)$ is a weak solution of (\ref{2.2-1}) in $\Omega$.
It also implies  that the whole sequence $\xi_k$ converges weakly to $A^0\nabla u_0$ in
$L^2(\Omega; \mathbb{R}^{d\times d})$.

Without loss of generality we may assume that $\xi_k\rightharpoonup  \xi_0$ weakly 
in $L^2(\Omega; \mathbb{R}^{d\times d})$.
Note that
\begin{equation}\label{2.2-5}
\langle \xi_k, \nabla \phi \rangle = \langle F_k, \phi \rangle+\langle p_k,\text{div}( \phi) \rangle
\end{equation}
for all $\phi \in H_0^1(\Omega;\mathbb{R}^d)$.
Fix $1\le j, \beta\le d$ and $\psi \in C_0^1 (\Omega)$.
Let
$$
\phi_k (x)= \left( P_j^\beta (x) +\varep _k \chi_j^{k*\beta} (x/\varep_k) \right) \psi (x),
$$
where $\chi^{k*\beta}_j$  (and $\pi^{k*\beta}_j$ used in the following)
are the correctors for the Stokes system with coefficient matrix
$(A^k)^*(x/\varep)$, introduced in Remark \ref{adjoint-remark}.
A computation shows that
\begin{equation}\label{2.2-10}
\aligned
\langle \xi_k, \nabla \phi_k \rangle
&=\langle A^k (x/\varep_k)\nabla u_k, \nabla \big(P_j^\beta +\varep_k \chi_j^{k*\beta} (x/\varep_k) \big)\cdot \psi\rangle\\
& \qquad\qquad\qquad
+\langle A^k (x/\varep_k)\nabla u_k, \big(P_j^\beta +\varep_k \chi_j^{k*\beta} (x/\varep_k)\big) \nabla \psi \rangle\\
&=\langle \psi (\nabla u_k), (A^k)^* (x/\varep_k) \nabla \big(P_j^\beta +\varep_k \chi_j^{*\beta} (x/\varep_k) \big)\rangle\\
& \qquad\qquad\qquad
+\langle A^k (x/\varep_k)\nabla u_k, \big(P_j^\beta +\varep_k \chi_j^{k*\beta} (x/\varep_k) \big)\nabla \psi \rangle\\
&=\langle \nabla (\psi u_k), (A^k)^* (x/\varep_k) \nabla \big(P_j^\beta +\varep_k \chi_j^{k*\beta} (x/\varep_k) \big)\rangle\\
&\qquad\qquad\qquad
-\langle (\nabla \psi) u_k, (A^k)^* (x/\varep_k) \nabla \big(P_j^\beta +\varep_k \chi_j^{k*\beta} (x/\varep_k) \big)\rangle\\
& \qquad\qquad\qquad
+\langle \xi_k, \big(P_j^\beta +\varep_k \chi_j^{k*\beta} (x/\varep_k) \big)\nabla \psi \rangle.\\
\endaligned
\end{equation}
Since
$$
-\text{\rm div} \left( (A^{k})^*(x/\varep_k) \nabla \left[ P_j^\beta +\varep_k \chi_j^{k*\beta} (x/\varep_k) \right]\right)
=-\nabla \left[  \pi_j^{k*\beta} (x/\varep_k) \right] \qquad \text{ in } \mathbb{R}^d,
$$
it follows that the first term in the right hand side of (\ref{2.2-10}) equals 
$$
\langle  \pi_j^{k*\beta} (x/\varep_k), \text{\rm div} (\psi u_k) \rangle
=\langle  \pi_j^{k*\beta} (x/\varep_k) -\average_Y \pi_j^{k*\beta}, \text{\rm div} (\psi u_k) \rangle.
$$
Using the fact that
$\text{div}(\psi u_k)=\nabla \psi\cdot u_k +\psi g_k  \to \nabla \psi \cdot u_0 +\psi g_0$ strongly in $L^2(\Omega)$
and
$$
\pi_j^{k*\beta} (x/\varep_k) -\average_Y \pi_j^{k*\beta} \rightharpoonup 0 \text{ weakly in } L^2(\Omega),
$$
we see that the first term in the right hand side of (\ref{2.2-10}) goes to zero.
In view of the estimate 
$$
\|\varep_k \chi_j^{k*\beta} (x/\varep_k)\|_{L^2(\Omega)}
\le C\, \varep_k \| \chi_j^{k*\beta} \|_{L^2(Y)} \le C\, \varep_k,
$$
it is easy to see that the third term in the right side of (\ref{2.2-10}) goes to 
$\langle \xi_0, P_j^\beta \nabla \psi \rangle$.

To handle the second term in the right hand side of (\ref{2.2-10}),
we note that by (\ref{periodic-convergence}),
$$
 \nabla P_i^\alpha \cdot  (A^k)^* (x/\varep_k) \nabla \big(P_j^\beta +\varep_k \chi_j^{k*\beta} (x/\varep_k) \big)
 $$
 converges weakly in $L^2(\Omega)$ 
 to
 $$
  \lim_{k\to \infty}
 \int_Y \nabla P_i ^\alpha \cdot (A^{k})^* (y) \nabla \big( P_j^\beta + \chi_j^{k*\beta} (y) \big)\, dy
=\lim_{k\to \infty} \widehat{a}_{ji}^{k\beta \alpha}
={a}^{0\beta\alpha}_{ji},
$$
where $\widehat{A^k}= (\widehat{a}_{ij}^{k\alpha\beta} )$,
$A^0= ( a^{0\alpha\beta}_{ij})$, and we have used the observation (\ref{adjoint-form}).
This, together with the fact that $u_k\to u_0$ strongly in $L^2(\Omega; \mathbb{R}^d)$, shows that
the second term in the right hand side of (\ref{2.2-10}) goes to 
$$
-{a}_{ji}^{0\beta\alpha} \int_\Omega \frac{\partial \psi}{\partial x_i} u_0^\alpha
={a}_{ji}^{0\beta\alpha} \int_\Omega \psi \frac{\partial u_0^\alpha}{\partial x_i},
$$
where we have used integration by parts.
To summarize, we have proved that as $k\to \infty$,
\begin{equation}\label{2.2-15}
\langle \xi_k, \nabla \phi_k \rangle
\to \langle \xi_0, P_j^\beta \nabla \psi\rangle 
+{a}_{ji}^{0\beta\alpha} \int_\Omega \psi \frac{\partial u_0^\alpha}{\partial x_i}.
\end{equation}

Finally, since $\phi_k \rightharpoonup  P_j^\beta \psi$ weakly in $H^1_0(\Omega; \mathbb{R}^d)$
and $F_k \to F_0$ strongly in $H^{-1}(\Omega; \mathbb{R}^d)$, we have
$\langle F_k, \phi_k\rangle \to \langle F_0, P_j^\beta \psi\rangle$.
Also, since $\text{\rm div} (\chi_j^\beta) =0$ in $\mathbb{R}^d$,
$$
\langle p_k, \text{\rm div} ( \phi_k) \rangle
=\langle p_k, \text{\rm div} (P_j^\beta \psi) \rangle+\langle p_k, \varep\chi_j^{k*\beta} (x/\varep_k) \nabla \psi\rangle
\to \langle p_0, \text{\rm div} (P_j^\beta \psi) \rangle.
$$
Thus, the right hand side of (\ref{2.2-5}) converges to
$$
\langle F_0, P_j^\beta \psi \rangle
+\langle p_0, \text{\rm div} (P_j^\beta \psi ) \rangle
=\langle \xi_0, \nabla (P_j^\beta \psi )\rangle
=\langle \xi_0, P_j^\beta \nabla \psi \rangle
+\langle \xi_0, \psi \nabla P_j^\beta\rangle,
$$
where the first equality follows by taking the limit in (\ref{2.2-5}) with $\phi=P_j^\beta \psi$.
In view of (\ref{2.2-15}) we obtain
$$
{a}_{ji}^{0\beta\alpha} \int_\Omega \psi \frac{\partial u_0^\alpha}{\partial x_i}
=\langle \xi_0, \psi \nabla P_j^\beta\rangle.
$$
Since $\psi\in C_0^1 (\Omega)$ is arbitrary, this gives
$(\xi_0)_j^\beta =a_{ji}^{0\beta\alpha} \frac{\partial u_0^\alpha}{\partial x_i}$, i.e., 
$\xi_0 =A^0 \nabla u_0$.
The proof is complete.
\end{proof}


\section{Interior Lipschitz estimates for $u_\varep$}
\setcounter{equation}{0}

For a ball
$
B=B(x_0,r)= \big\{ x\in \mathbb{R}^d: |x-x_0|<r\big\}
$
in $\mathbb{R}^d$, we will use $tB$ to denote $B(x_0,tr)$, the ball with the same center and $t$ times the radius of $B$.

We start with a Cacciopoli's inequality for the Stokes system, whose proof may be found in \cite{Gia-1982}.

\begin{theorem}
Let $(u_\varepsilon, p_\varep) \in H^1(2B; \mathbb{R}^d)\times L^2(2B)$ be a weak solution of
$$
\left\{
\aligned
 \mathcal{L}_\varepsilon(u_\varepsilon) +\nabla p_\varep
& =F+\text{\rm div} (f),\\
\text{\rm div} (u_\varepsilon) & = g
\endaligned
\right.
$$
in $2B$, where $B=B(x_0, r)$, $F\in L^2(2B; \mathbb{R}^d)$ and $f\in L^2(2B; \mathbb{R}^{d\times d})$.
Then
\begin{equation}\label{Ca}
\aligned
 \int_{B} |\nabla u_\varepsilon|^2 
& +\int_B \big|p_\varep -\average_B p_\varep\big|^2\\
& \le C \left\{
\frac{1}{r^2} \int_{2B} |u_\varepsilon|^2 +\int_{2B} |f|^2 +\int_{2B} |g|^2 + r^2 \int_{2B} |F|^2  \right\},
\endaligned
\end{equation}
where $\mathit{C}$ depends only on $d$ and $\mu$.
\end{theorem}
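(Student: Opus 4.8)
The plan is to decouple the bounds for $\nabla u_\varep$ and $p_\varep$: first I would prove a scale-invariant \emph{local} estimate for the pressure, then run a Caccioppoli argument for the velocity on a one-parameter family of concentric balls, and finally close the resulting coupling with a standard iteration lemma. For the pressure step, fix any ball $B'=B(x_0,\rho)\subseteq 2B$. Using the classical fact (Bogovskii's construction; see \cite{Duran-2012}) that there exists $\psi\in H^1_0(B';\mathbb{R}^d)$ with $\text{\rm div}(\psi)=p_\varep-\average_{B'}p_\varep$ in $B'$ and $\|\nabla\psi\|_{L^2(B')}\le C\,\|p_\varep-\average_{B'}p_\varep\|_{L^2(B')}$, with $C$ independent of $\rho$ by dilation, one extends $\psi$ by zero and uses it as a test function in the weak formulation over $2B$ to get
\[
\int_{B'}\big|p_\varep-\average_{B'}p_\varep\big|^2=a_\varep(u_\varep,\psi)-\int_{B'}F\cdot\psi+\int_{B'}f\cdot\nabla\psi .
\]
Estimating the right side by Cauchy--Schwarz, using the boundedness of $a_\varep$ and Poincar\'e's inequality $\|\psi\|_{L^2(B')}\le C\rho\,\|\nabla\psi\|_{L^2(B')}$, and dividing by $\|p_\varep-\average_{B'}p_\varep\|_{L^2(B')}$, yields the local pressure bound
\[
\Big(\int_{B'}\big|p_\varep-\average_{B'}p_\varep\big|^2\Big)^{1/2}\le C\Big\{\Big(\int_{B'}|\nabla u_\varep|^2\Big)^{1/2}+\rho\Big(\int_{B'}|F|^2\Big)^{1/2}+\Big(\int_{B'}|f|^2\Big)^{1/2}\Big\}.
\]

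For the velocity, fix $r\le\rho<s\le 2r$ and a cutoff $\eta\in C_0^\infty(B(x_0,s))$ with $\eta\equiv 1$ on $B(x_0,\rho)$ and $|\nabla\eta|\le 2(s-\rho)^{-1}$, and test the weak formulation with $\varphi=\eta^2 u_\varep\in H^1_0(B(x_0,s);\mathbb{R}^d)$. The ellipticity condition (\ref{ellipticity}) gives
\[
\mu\int\eta^2|\nabla u_\varep|^2\le\Big|\int p_\varep\,\text{\rm div}(\eta^2u_\varep)\Big|+C\int\eta|\nabla\eta|\,|\nabla u_\varep|\,|u_\varep|+\Big|\int F\cdot\eta^2u_\varep\Big|+\Big|\int f\cdot\nabla(\eta^2u_\varep)\Big| .
\]
Since $\text{\rm div}(\eta^2u_\varep)=\eta^2g+2\eta\,\nabla\eta\cdot u_\varep$ and $\eta^2u_\varep$ has zero-mean divergence, the first term equals $\int(p_\varep-\average_{B(x_0,s)}p_\varep)(\eta^2g+2\eta\,\nabla\eta\cdot u_\varep)$; bounding it by Cauchy--Schwarz and inserting the local pressure bound on $B(x_0,s)$ produces a factor $\|\nabla u_\varep\|_{L^2(B(x_0,s))}$. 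Applying Young's inequality to every product, with the small parameter chosen so that the total coefficient of $\int_{B(x_0,s)}|\nabla u_\varep|^2$ is at most $\mu/2$, and bounding all remaining integrals over $B(x_0,s)$ by integrals over $2B$ (using $s\le 2r$), I obtain
\[
\int_{B(x_0,\rho)}|\nabla u_\varep|^2\le\frac12\int_{B(x_0,s)}|\nabla u_\varep|^2+\frac{C}{(s-\rho)^2}\int_{2B}|u_\varep|^2+C\int_{2B}|g|^2+Cr^2\int_{2B}|F|^2+C\int_{2B}|f|^2 .
\]
With $\Phi(t)=\int_{B(x_0,t)}|\nabla u_\varep|^2$ this reads $\Phi(\rho)\le\tfrac12\Phi(s)+A(s-\rho)^{-2}+B$ for all $r\le\rho<s\le 2r$, with $A$, $B$ fixed; a standard real-variable iteration lemma then gives $\Phi(r)\le C(Ar^{-2}+B)$, which is the desired bound on $\int_B|\nabla u_\varep|^2$. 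Finally, applying the local pressure bound with $B'=B$ and substituting this gradient estimate controls $\int_B|p_\varep-\average_Bp_\varep|^2$ by the same right-hand side; adding the two estimates finishes the proof.

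The main obstacle is the pressure term in the velocity Caccioppoli estimate: the natural test function $\eta^2u_\varep$ does not annihilate it, and the only control available for the pressure oscillation is the local pressure bound, which feeds back $\|\nabla u_\varep\|_{L^2}$ over the \emph{larger} ball $B(x_0,s)$. This apparent circularity is precisely why the argument must be run on a family of concentric balls and closed through the iteration lemma, which absorbs the surplus gradient term. The other essential (but classical) ingredient is the dilation-invariant solvability of $\text{\rm div}(\psi)=\phi$ for mean-zero $\phi$ on a ball, which underlies the pressure step.
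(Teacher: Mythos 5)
Your proof is correct. The paper does not actually prove this Caccioppoli inequality; it cites \cite{Gia-1982} for it, and your argument — controlling the pressure oscillation locally via the Bogovskii-type solution of $\text{div}(\psi)=p_\varep-\average_{B'}p_\varep$, running the cutoff estimate on concentric balls, and closing the resulting $\tfrac12\Phi(s)$ feedback with the standard iteration lemma — is precisely the standard proof found in that reference.
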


\begin{lemma}\label{step-1}
Let $0<\sigma<\rho<1$ and $\rho=1-\frac{d}{q}$.
Then there exist $\varep_0\in (0, 1/2)$ and $\theta\in (0,1/4)$, depending only on $d$, $\mu$, $\sigma$ and $\rho$,
such that
\begin{equation}\label{3.2-0}
\aligned
& \left(\average_{B(0, \theta)}
\big|u_\varep -\average_{B(0, \theta)} u_\varep
-(P_j^\beta +\varep \chi_j^\beta (x/\varep))
 \average_{B(0, \theta)} \frac{\partial u^\beta_\varep}{\partial x_j}\big |^2\, dx \right)^{1/2}\\
&\qquad  \le \theta^{1+\sigma}\max \left\{
 \left(\average_{B(0,1)} |u_\varep|^2\right)^{1/2},
\left(\average_{B(0,1)} |F|^q\right)^{1/q},
\| g\|_{C^\rho (B(0,1))} \right\},
\endaligned
\end{equation}
whenever $0<\varep<\varep_0$, and $(u_\varep, p_\varep)$ is a weak solution of
\begin{equation} \label{3.2-1}
\mathcal{L}_\varep  (u_\varep)+\nabla p_\varep =F \quad \text{ and } \quad \text{\rm div} (u_\varep) =g
\end{equation}
in $B(0,1)$.
\end{lemma}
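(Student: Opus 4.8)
The plan is to argue by contradiction using the compactness theorem (Theorem \ref{compactness-theorem}) together with the $C^{1,\sigma}$ regularity of the homogenized (constant-coefficient) Stokes system. Suppose the conclusion fails for every $\varepsilon_0$ and $\theta$; then, fixing a small $\theta\in(0,1/4)$ to be chosen later, there exist a sequence $\varepsilon_k\to 0$, coefficient matrices $A^k$ satisfying \eqref{ellipticity} and \eqref{periodicity} with the same $\mu$, and weak solutions $(u_k,p_k)$ of $\mathcal{L}^k_{\varepsilon_k}(u_k)+\nabla p_k=F_k$, $\operatorname{div}(u_k)=g_k$ in $B(0,1)$ with the normalization
\[
\max\left\{\left(\average_{B(0,1)}|u_k|^2\right)^{1/2},\ \left(\average_{B(0,1)}|F_k|^q\right)^{1/q},\ \|g_k\|_{C^\rho(B(0,1))}\right\}\le 1,
\]
yet the left-hand side of \eqref{3.2-0} (with $u_\varep,\chi$ replaced by $u_k,\chi^k$) exceeds $\theta^{1+\sigma}$. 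The Caccioppoli inequality \eqref{Ca} applied on $B(0,1/2)\subset B(0,1)$ (writing $F_k$ as a right-hand side and using $\|g_k\|_{L^2}\le 1$) gives a uniform bound $\|u_k\|_{H^1(B(0,1/2))}\le C$ and $\|p_k-\average p_k\|_{L^2(B(0,1/2))}\le C$; after subtracting the mean of $p_k$ we may assume $\average_{B(0,1/2)}p_k=0$.

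Passing to a subsequence, $u_k\rightharpoonup u_0$ weakly in $H^1(B(0,1/2))$ (hence strongly in $L^2$), $p_k\rightharpoonup p_0$ weakly in $L^2(B(0,1/2))$, $F_k\to F_0$ strongly in $L^2$ (so also in $H^{-1}$), $g_k\to g_0$ strongly in $L^2$, and $\widehat{A^k}\to A^0$ for some constant matrix $A^0$ satisfying the ellipticity \eqref{ellipticity-1}. By Theorem \ref{compactness-theorem}, $(u_0,p_0)$ is a weak solution of $-\operatorname{div}(A^0\nabla u_0)+\nabla p_0=F_0$, $\operatorname{div}(u_0)=g_0$ in $B(0,1/2)$. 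Since $A^0$ is a constant-coefficient elliptic system, interior regularity for the homogenized Stokes system (or, after subtracting a divergence-correction, for the classical Stokes system) gives $u_0\in C^{1,\rho'}$ near $0$ for any $\rho'<\rho$; the relevant quantitative statement is that there is $C_0=C_0(d,\mu,\rho)$ with
\[
\left(\average_{B(0,\theta)}\big|u_0-\average_{B(0,\theta)}u_0-(x\cdot\nabla)\,u_0(0)\big|^2\right)^{1/2}\le C_0\,\theta^{1+\rho}\max\{\|u_0\|_{L^2(B(0,1/2))},\|F_0\|_{L^q},\|g_0\|_{C^\rho}\}\le C_0\,\theta^{1+\rho}.
\]
Now choose $\theta$ small so that $2C_0\theta^{1+\rho}\le\theta^{1+\sigma}$, which is possible since $\sigma<\rho$.

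The final step is to transfer this estimate back to $u_k$ along the two-scale expansion. One shows that
\[
u_k(x)-\average_{B(0,\theta)}u_k-\big(P_j^\beta(x)+\varepsilon_k\chi_j^{k\beta}(x/\varepsilon_k)\big)\average_{B(0,\theta)}\frac{\partial u_k^\beta}{\partial x_j}\ \longrightarrow\ u_0(x)-\average_{B(0,\theta)}u_0-(x\cdot\nabla)\,u_0(0)
\]
strongly in $L^2(B(0,\theta))$: the difference of means converges by $u_k\to u_0$ in $L^2$, the term $\varepsilon_k\chi_j^{k\beta}(x/\varepsilon_k)$ is $O(\varepsilon_k)$ in $L^2$ by \eqref{corrector-estimate} while the averaged gradients $\average_{B(0,\theta)}\partial_j u_k^\beta$ stay bounded and converge to $\partial_j u_0^\beta(0)$ (using that $\nabla u_k\rightharpoonup\nabla u_0$ weakly and the Lebesgue differentiation point can be handled by a further averaging over a slightly larger ball, or by noting the bound suffices for the contradiction), and $P_j^\beta(x)\average\partial_j u_k^\beta\to (x\cdot\nabla)u_0(0)$. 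Hence for $k$ large the left-hand side of \eqref{3.2-0} for $u_k$ is at most $2C_0\theta^{1+\rho}\le\theta^{1+\sigma}$, contradicting the assumption. This fixes $\theta$; then $\varepsilon_0$ is extracted from the "$k$ large" threshold in the usual way (if no such $\varepsilon_0$ worked, repeat the argument).

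The main obstacle I anticipate is the justification that the averaged gradients $\average_{B(0,\theta)}\partial_j u_k^\beta$ behave well in the limit and that the corrector term genuinely contributes the right first-order Taylor polynomial of $u_0$ — i.e., making rigorous that the two-scale ansatz $(P_j^\beta+\varepsilon_k\chi^{k\beta}_j(x/\varepsilon_k))\average\partial_j u_k^\beta$ converges in $L^2(B(0,\theta))$ to the affine function $(x\cdot\nabla)u_0(0)$. The subtlety is that $\average_{B(0,\theta)}\partial_j u_k^\beta$ is not obviously close to $\partial_j u_0^\beta(0)$ unless one also knows $\partial_j u_0$ is continuous at $0$ (which comes precisely from the $C^{1,\rho}$ regularity of the homogenized system) and controls the weak-$L^2$ convergence of $\nabla u_k$ on the fixed ball $B(0,\theta)$; care is also needed because the $C^{1,\rho}$ estimate for the Stokes system must absorb the inhomogeneous terms $F_0$ and $g_0\in C^\rho$, for which one uses the standard Schauder-type theory for the constant-coefficient Stokes system after subtracting an explicit particular solution handling $\operatorname{div}(u_0)=g_0$.
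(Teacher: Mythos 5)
Your proposal is correct and follows essentially the same compactness-and-contradiction route as the paper (blow-up sequence, Caccioppoli bound, Theorem \ref{compactness-theorem}, $C^{1,\rho}$ estimate for the constant-coefficient Stokes system, then choose $\theta$ with $C_0\theta^{\rho}<\theta^{\sigma}$). The one obstacle you flag --- matching $\big(P_j^\beta+\varepsilon_k\chi_j^{k\beta}(x/\varepsilon_k)\big)\average_{B(0,\theta)}\partial_j u_k^\beta$ with the Taylor polynomial $(x\cdot\nabla)u_0(0)$ --- is simply sidestepped in the paper: the constant-coefficient estimate is stated for $u_0-\average_{B(0,\theta)}u_0-P_j^\beta\average_{B(0,\theta)}\partial_j u_0^\beta$ (never the pointwise value $\nabla u_0(0)$), and since $\nabla u_k\rightharpoonup\nabla u_0$ weakly in $L^2(B(0,1/2))$, the averages $\average_{B(0,\theta)}\partial_j u_k^\beta$ converge to $\average_{B(0,\theta)}\partial_j u_0^\beta$ because averaging over the fixed ball $B(0,\theta)$ is testing against a fixed $L^2$ function; the limit passage is then immediate and no continuity of $\nabla u_0$ at the origin is invoked. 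One small imprecision in your write-up: $F_k$ is only bounded in $L^q$, so you may extract weak $L^q$ convergence but not strong $L^2$ convergence; the strong $H^{-1}$ convergence required by Theorem \ref{compactness-theorem} instead follows from the compactness of the embedding of $L^q$ into $H^{-1}$ on the ball.
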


\begin{proof}
We prove the lemma by contradiction, using the same approach as in \cite{AL-1987} for the elliptic system
$\mathcal{L}_\varep (u_\varep)=F$.
First, we note that by the interior $C^{1, \rho}$ estimates
for solutions of Stokes systems with constant coefficients,
\begin{equation}\label{3.2-2}
\aligned
& \left(\average_{B(0, \theta)}
\big|u_0 -\average_{B(0, \theta)} u_0
-P_j^\beta
 \average_{B(0, \theta)} \frac{\partial u_0^\beta}{\partial x_j}\big |^2 \right)^{1/2}\\
 &\qquad \le C\, \theta^{1+\rho} \| u_0\|_{C^{1, \rho} (B(0, 1/4))}\\
&\qquad  \le C_0 \, \theta^{1+\rho}\max \left\{
 \left(\average_{B(0,1/2)} |u_0|^2\right)^{1/2},
\left(\average_{B(0,1/2)} |F_0|^q\right)^{1/q},
\| g_0\|_{C^\rho (B(0,1/2))} \right\}
\endaligned
\end{equation}
for any $\theta\in (0,1/4)$, where $(u_0, p_0)$ is a weak solution of 
\begin{equation} \label{3.2-3}
-\text{\rm div} \big (A^0\nabla u_0 \big)+\nabla p_0 =F_0 \quad \text{ and } \quad \text{\rm div} (u_0) =g_0
\end{equation}
in $B(0,1/2)$ and $A^0$ is a constant matrix satisfying the ellipticity condition (\ref{ellipticity-1}).
We emphasize that the constant $C_0$ in (\ref{3.2-2}) depends only on
$d$ and $\mu$. Since $0<\sigma <\rho$, we may choose $\theta\in (1/4)$  such that
\begin{equation}\label{3.2-4-1}
2^d C_0 \theta^\rho < \theta^\sigma.
\end{equation}
We claim that there exists $\varep_0>0$, depending only on $d$, $\mu$,
$\sigma$ and $\rho$, such that the estimate (\ref{3.2-0}) holds with this $\theta$, whenever $0<\varep<\varep_0$ and
$(u_\varep, p_\varep)$ is a weak solution of (\ref{3.2-1}) in $B(0,1)$.

Suppose this is not the case. 
Then there exist sequences $\{\varep_k \}$, $\{ A^k (y)\}$, $\{ u_k \}$ and $\{ p_k\}$  such that
$\varep_k \to 0$, $A^k(y)$ satisfies (\ref{ellipticity}) and (\ref{periodicity}), 
\begin{equation}\label{3.2-4}
-\text{\rm div} \big (A^k (x/\varep_k) \nabla u_k \big) +\nabla p_k =F_k
\quad \text{ and } \quad \text{\rm div} (u_k) =g_k \quad \text{ in } B(0,1),
\end{equation}
\begin{equation}\label{3.2-5}
\max \left\{ \left(\average_{B(0,1)} |u_k|^2 \right)^{1/2},
\left(\average_{B(0,1)} |F_k|^q\right)^{1/q},
\| g_k\|_{C^\rho(B(0,1))} \right\} \le 1,
\end{equation}
and
\begin{equation}\label{3.2-6}
 \left(\average_{B(0, \theta)}
\big|u_k -\average_{B(0, \theta)} u_k
-(P_j^\beta +\varep_k \chi_j^{k\beta} (x/\varep_k))
 \average_{B(0, \theta)} \frac{\partial u^\beta_k}{\partial x_j}\big |^2\, dx \right)^{1/2}
 >\theta^{1+\sigma},
 \end{equation}
 where $(\chi_j^{k\beta}) $ denotes the correctors
 for the Stokes systems with coefficient matrices $A^k (x/\varep)$. 
 Note that by (\ref{3.2-5}) and Cacciopoli's inequality (\ref{Ca}),
  the sequence $\{ u_k\}$ is bounded in $H^1(B(0,1/2); \mathbb{R}^d)$.
 Thus, by passing to a subsequence, we may assume that
 $u_k \rightharpoonup u_0$ weakly in $L^2(B(0,1); \mathbb{R}^d)$ and
 $u_k \rightharpoonup u_0$ weakly in $H^1(B(0,1/2); \mathbb{R}^d)$.
 Similarly, in view of (\ref{3.2-5}), by passing to subsequences, we may assume that
 $g_k \to g_0$ in $L^\infty(B(0,1))$ and
 $F_k \rightharpoonup F_0$ weakly in  $L^q(B(0,1); \mathbb{R}^d)$.
 Since $\widehat{A^k}$ satisfies the ellipticity condition (\ref{ellipticity-1}),
 we may further assume that $\widehat{A^k} \to A^0$ for some constant matrix
 $A^0$ satisfying (\ref{ellipticity-1}).
 
 Since $\varep_k \chi_j^{k\beta} (x/\varep_k) \to 0$ strongly in $L^2(B(0,1); \mathbb{R}^d)$,
 by taking the limit in (\ref{3.2-6}), we obtain
 \begin{equation}\label{3.2-7}
\left(\average_{B(0, \theta)}
\big|u_0 -\average_{B(0, \theta)} u_0
-P_j^\beta 
 \average_{B(0, \theta)} \frac{\partial u^\beta_0}{\partial x_j}\big |^2\, dx \right)^{1/2}
 \ge \theta^{1+\sigma}.
 \end{equation}
 Also observe that (\ref{3.2-5}) implies 
 \begin{equation}\label{3.2-8}
 \max \left\{ \left(\average_{B(0,1)} |u_0|^2 \right)^{1/2},
\left(\average_{B(0,1)} |F_0|^q\right)^{1/q},
\| g_0\|_{C^\rho(B(0,1))} \right\} \le 1.
\end{equation}

Finally, we note that 
$$
\aligned
\| p_k -\average_{B(0,1/2)} p_k \|_{L^2(B(0,1/2))}
& \le C \, \| \nabla p_k \|_{H^{-1} (B(0,1/2))}\\
&\le C \Big\{ \|\nabla u_k\|_{L^2(B(0,1/2))} +\| F_k\|_{H^{-1}(B(0,1/2))}\Big\}\\
&\le C,
\endaligned
$$
where the first inequality holds for any $p_k\in L^2(B(0,1/2))$,
and we have used the first equation in (\ref{3.2-4}) for the second inequality and
 Cacciopoli's inequality for the third.
Clearly, we may assume $\int_{B(0, 1/2)} p_k =0$ by subtracting a constant.
Thus, by passing to a subsequence, we may assume that 
$p_k\rightharpoonup p_0$ weakly in $L^2(B(0,1/2))$.
This, together with convergence of $u_k$, $F_k$, $g_k$, and $\widehat{A^k}$,
 allows us to apply Theorem \ref{compactness-theorem} to conclude that
$-\text{\rm div} \big (A^0\nabla u_0) +\nabla p_0 =F_0$ and
$\text{\rm div} (u_0)=g_0$ in $B(0,1/2)$.
As a result, in view of (\ref{3.2-2}), (\ref{3.2-7}) and (\ref{3.2-8}),
we obtain
$$
\aligned
\theta^{1+\sigma}
 &\le C_0 \, \theta^{1+\rho}
\max \left\{ \left(\average_{B(0,1/2)} |u_0|^2\right)^{1/2},
\left(\average_{B(0,1/2)} |F_0|^q\right)^{1/q},
\| g_0\|_{C^\rho (B(0,1/2))} \right\} \\
&\le 2^dC_0 \theta^{1+\rho},
\endaligned
$$
which contradicts (\ref{3.2-4-1}).
This completes the proof.
\end{proof}

\begin{remark}\label{remark-3.1}
{\rm
It is easy to see that estimate (\ref{3.2-0}) continues to hold if we replace $\average_{B(0, \theta)} u_\varep$
by the average
$$
\average_{B(0,\theta)}
\bigg[ u_\varep -\big( P_j^\beta+\varep\chi_j^\beta (x/\varep) \big) \average_{B(0, \theta)} \frac{\partial u_\varep}
{\partial x_j}\bigg] \, dx.
$$
This will be used in the next lemma.
}
\end{remark}

\begin{lemma}\label{step-2}
Let $0<\sigma<\rho<1$ and $\rho=1-\frac{d}{q}$.
Let $(\varep_0, \theta)$ be the constants given by Lemma \ref{step-1}.
Suppose that $0<\varep<\theta^{k-1} \varep_0$ for some $k\ge 1$, and
\begin{equation}\label{3.3-0}
\mathcal{L}_\varep (u_\varep) +\nabla p_\varep =F \quad \text{ and } \quad
\text{\rm div} (u_\varep) =g \quad \text{ in } B(0, 1).
\end{equation}
Then there exist constants  $E(\varep, \ell)
=(E_j^\beta (\varep, \ell) ) \in \mathbb{R}^{d\times d}$ for $1\le \ell\le k$,
such that
\begin{equation}\label{3.3-1}
\aligned
 &\left(\average_{B(0, \theta^\ell)} \bigg| u_\varep -
  \big( P_j^\beta +\varep \chi_j^\beta (x/\varep) \big) E_j^\beta (\varep, \ell)
  -\average_{B(0, \theta^\ell)}
  \big[ u_\varep - \big( P_j^\beta +\varep\chi_j^\beta (x/\varep)\big) E_j^\beta (\varep, \ell) \big] \bigg|^2
\,  \right)^{1/2}\\
&\qquad
\le \theta^{\ell (1+\sigma)}
\max\left\{ \left(\average_{B(0,1)} |u_\varep|^2\right)^{1/2},
\left(\average_{B(0,1)} |F|^q\right)^{1/q},
\| g\|_{C^\rho (B(0,1))} \right\}.
\endaligned
\end{equation}
Moreover, the constants  $E (\varep, \ell )$
satisfy
\begin{equation}\label{3.3-2}
 |E(\varep, \ell)| \le 
C\max \left\{ \left(\average_{B(0,1)} |u_\varep|^2\right)^{1/2},
\left(\average_{B(0,1)} |F|^q\right)^{1/q},
\| g\|_{C^\rho (B(0,1))} \right\},
\end{equation}
\begin{equation}\label{3.3-3}
\aligned
&|E(\varep, \ell+1)-E(\varep, \ell)|\\
&\qquad \le C\, \theta^{\ell\sigma}\max \left\{ \left(\average_{B(0,1)} |u_\varep|^2\right)^{1/2},
\left(\average_{B(0,1)} |F|^q\right)^{1/q},
\| g\|_{C^\rho (B(0,1))} \right\},
\endaligned
\end{equation}
where $C$ depends only on $d$, $\mu$, $\sigma$ and $\rho$,
and
\begin{equation}\label{3.3-4}
\sum_{j=1}^d E_j^j (\varep, \ell)
=\average_{B(0, \theta^\ell)} g.
\end{equation}
\end{lemma}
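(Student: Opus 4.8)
The plan is to establish (\ref{3.3-1})--(\ref{3.3-4}) simultaneously by induction on $\ell$, the induction step being a $\theta^\ell$-dilated application of Lemma \ref{step-1}. Since the Stokes system (\ref{3.3-0}) and every quantity in the conclusion are homogeneous in $(u_\varep,p_\varep,F,g)$, I would first normalize so that
$M:=\max\{(\average_{B(0,1)}|u_\varep|^2)^{1/2},(\average_{B(0,1)}|F|^q)^{1/q},\|g\|_{C^\rho(B(0,1))}\}=1$, the general case following by rescaling. For $\ell=1$ take $E_j^\beta(\varep,1)=\average_{B(0,\theta)}\partial u_\varep^\beta/\partial x_j$; then (\ref{3.3-1}) with $\ell=1$ is exactly Lemma \ref{step-1} together with Remark \ref{remark-3.1}, the bound $|E(\varep,1)|\le C$ follows from Caccioppoli's inequality (\ref{Ca}) on $B(0,1)$ (noting $B(0,\theta)\subset B(0,1/2)$), and (\ref{3.3-4}) holds since $\sum_j E_j^j(\varep,1)=\average_{B(0,\theta)}\operatorname{div}(u_\varep)=\average_{B(0,\theta)}g$.

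For the induction step, suppose $E(\varep,1),\dots,E(\varep,\ell)$ have been constructed for some $1\le\ell\le k-1$. Put $w=u_\varep-(P_j^\beta+\varep\chi_j^\beta(\cdot/\varep))E_j^\beta(\varep,\ell)$; by the corrector equations (\ref{corrector}) and $\operatorname{div}(\chi_j^\beta)=0$, the pair $w$ (with pressure $p_\varep-\pi_j^\beta(\cdot/\varep)E_j^\beta(\varep,\ell)$) solves a Stokes system with right-hand side $F$ and $\operatorname{div}(w)=g-\sum_j E_j^j(\varep,\ell)=g-\average_{B(0,\theta^\ell)}g$ in $B(0,1)$, while (\ref{3.3-1}) gives $(\average_{B(0,\theta^\ell)}|w-\average_{B(0,\theta^\ell)}w|^2)^{1/2}\le\theta^{\ell(1+\sigma)}$. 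Now dilate: set $v(x)=\theta^{-\ell(1+\sigma)}[\,w(\theta^\ell x)-\average_{B(0,\theta^\ell)}w\,]$ on $B(0,1)$. By the rescaling identities (\ref{rescaling})--(\ref{rescaling-1}), $v$ is a weak solution of a Stokes system with coefficient $A(\cdot/(\varep/\theta^\ell))$, with $(\average_{B(0,1)}|v|^2)^{1/2}\le1$, source $\Phi$ satisfying $(\average_{B(0,1)}|\Phi|^q)^{1/q}\le\theta^{\ell(1-\sigma)}$, and divergence datum $\Psi$ satisfying $\|\Psi\|_{C^\rho(B(0,1))}\le(1+2^\rho)\theta^{\ell(\rho-\sigma)}$, the last bound using (\ref{3.3-4}) and the $C^{0,\rho}$ bound on $g$ to control $\|g-\average_{B(0,\theta^\ell)}g\|_{L^\infty(B(0,\theta^\ell))}$. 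Since $\varep<\theta^{k-1}\varep_0\le\theta^\ell\varep_0$, we have $\varep/\theta^\ell<\varep_0$, so Lemma \ref{step-1} and Remark \ref{remark-3.1} apply to $v$; if moreover $\theta$ is chosen small enough (compatibly with (\ref{3.2-4-1}), which is possible because $\rho-\sigma>0$ is fixed) that $\theta^{1-\sigma}\le1$ and $(1+2^\rho)\theta^{\rho-\sigma}\le1$, then the maximum on the right of (\ref{3.2-0}) for $v$ is $\le1$, and Lemma \ref{step-1} with Remark \ref{remark-3.1} yields estimate (\ref{3.2-0}) for $v$ with right-hand side $\theta^{1+\sigma}$ and with the corrector term $P_j^\beta(x)+\tfrac{\varep}{\theta^\ell}\chi_j^\beta(\theta^\ell x/\varep)$. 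One then defines $E_j^\beta(\varep,\ell+1)=E_j^\beta(\varep,\ell)+\theta^{\ell\sigma}\average_{B(0,\theta)}\partial v^\beta/\partial x_j$.

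The last step is to read the estimates back through the dilation. Using $P_j^\beta(\theta^\ell x)=\theta^\ell P_j^\beta(x)$ and $\varep\chi_j^\beta(\theta^\ell x/\varep)=\theta^\ell\tfrac{\varep}{\theta^\ell}\chi_j^\beta(\theta^\ell x/\varep)$, one checks that $u_\varep(\theta^\ell x)-(P_j^\beta(\theta^\ell x)+\varep\chi_j^\beta(\theta^\ell x/\varep))E_j^\beta(\varep,\ell+1)$ equals $\theta^{\ell(1+\sigma)}[\,v(x)-(P_j^\beta(x)+\tfrac{\varep}{\theta^\ell}\chi_j^\beta(\theta^\ell x/\varep))\average_{B(0,\theta)}\partial v^\beta/\partial x_j\,]$ plus a constant; subtracting its average over $B(0,\theta^{\ell+1})$ and invoking the rescaled estimate for $v$ gives (\ref{3.3-1}) at level $\ell+1$. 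Then (\ref{3.3-3}) follows from $|E(\varep,\ell+1)-E(\varep,\ell)|\le\theta^{\ell\sigma}(\average_{B(0,\theta)}|\nabla v|^2)^{1/2}$ together with Caccioppoli (\ref{Ca}) for $v$ (whose $L^2$ and source norms are all $\le C$); (\ref{3.3-2}) follows by telescoping $E(\varep,\ell+1)=E(\varep,1)+\sum_{m=1}^{\ell}(E(\varep,m+1)-E(\varep,m))$ and summing the geometric series $\sum_m\theta^{m\sigma}$; and (\ref{3.3-4}) at level $\ell+1$ follows from $\theta^{\ell\sigma}\average_{B(0,\theta)}\operatorname{div}(v)=\average_{B(0,\theta^{\ell+1})}(g-\average_{B(0,\theta^\ell)}g)=\average_{B(0,\theta^{\ell+1})}g-\average_{B(0,\theta^\ell)}g$. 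Undoing the normalization $M=1$ restores the factor $M$ on every right-hand side. The main obstacle is closing the induction with the clean constant $1$ in (\ref{3.3-1}): this is precisely what forces the extra smallness of $\theta$ above, so that after dilation the source terms $\Phi,\Psi$ are absorbed by $\|v\|_{L^2(B(0,1))}$; the rest — tracking the corrector terms through the dilation and checking $\varep/\theta^\ell<\varep_0$ — is routine bookkeeping, made possible by the divergence identity (\ref{3.3-4}) which keeps $\sum_j E_j^j(\varep,\ell)$ equal to the controllable quantity $\average_{B(0,\theta^\ell)}g$ at every stage.
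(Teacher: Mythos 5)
Your proof is correct and follows essentially the same route as the paper's: induction on $\ell$, a $\theta^\ell$-dilation of the level-$\ell$ error, an application of Lemma \ref{step-1} (via Remark \ref{remark-3.1}) to the rescaled solution, the same definition of $E(\varep,\ell+1)$ as $E(\varep,\ell)$ plus the averaged gradient of the rescaled error, Caccioppoli for (\ref{3.3-3}), telescoping for (\ref{3.3-2}), and the divergence bookkeeping for (\ref{3.3-4}); your normalization of $v$ by $\theta^{-\ell(1+\sigma)}$ and the explicit extra smallness of $\theta$ to absorb the constant $(1+2^{\rho})$ in the divergence datum are cosmetic refinements of what the paper does implicitly. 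The only slip is the exponent in the source bound, which should be $\theta^{\ell(\rho-\sigma)}$ rather than $\theta^{\ell(1-\sigma)}$, but since $\rho>\sigma$ this is still at most $1$ and the argument is unaffected.
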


\begin{proof}
The lemma is proved by an induction argument on $\ell$.
The case $\ell=1$ follows directly from Lemma \ref{step-1}, with
$$
E_j^\beta (\varep, 1) =\average_{B(0, \theta)} \frac{\partial u_\varep^\beta}{\partial x_j}
$$
(see Remark \ref{remark-3.1}).
Suppose that the desired constants exist for all positive integers up to some $\ell$, where $1\le \ell\le k-1$.
To construct $E(\varep, \ell +1)$, we consider
$$
\aligned
w(x)=& u_\varep (\theta^\ell x) - \left\{ P_j^\beta (\theta^\ell x)
+\varep  \chi_j^\beta (\theta^\ell x/\varep) \right\}
E_j^\beta (\varep, \ell)\\
&\qquad\qquad
-\average_{B(0, \theta^\ell)}
\bigg[ u_\varep -\big( P_j^\beta +\varep \chi_j^\beta (x/\varep) \big) E_j^\beta (\varep, \ell) \bigg].
\endaligned
$$
Note that by the rescaling property of the Stokes system,
\begin{equation}\label{3.3-5}
\left\{
\aligned
& \mathcal{L}_{\frac{\varep}{\theta^\ell}} (w) +\nabla \left\{ \theta^\ell p_\varep (\theta^\ell x) -
\theta^\ell \pi_j^\beta (\theta^\ell x/\varep) E_j^\beta (\varep, \ell) \right\}   = \theta^{2\ell} F(\theta^\ell x),\\
& \text{\rm div} (w) = \theta^\ell g(\theta^\ell x) -\theta^\ell \sum_{j=1}^d E_j^j (\varep, \ell),
\endaligned
\right.
\end{equation}
in $B(0,1)$, 
where $\pi_j^\beta $ is defined by (\ref{corrector}). 
Since $(\varep/\theta^\ell) \le ({\varep}/{\theta^{k-1}}) < \varep_0$,
we may apply Lemma \ref{step-1} to obtain
\begin{equation}\label{3.3-6}
\aligned
& \bigg(\average_{B(0, \theta)}
\bigg|w -\big(P_j^\beta +\theta^{-\ell} \varep \chi_j^\beta (\theta^\ell x/\varep)\big)
 \average_{B(0, \theta)} \frac{\partial w^\beta}{\partial x_j}\\
&\qquad\qquad\qquad 
 -\average_{B(0, \theta)}
 \bigg[w -\big(P_j^\beta +\theta^{-\ell} \varep \chi_j^\beta (\theta^\ell x/\varep)\big)
 \average_{B(0, \theta)} \frac{\partial w^\beta}{\partial x_j}\bigg] \bigg|^2
\, dx \bigg)^{1/2}\\
& \le \theta^{1+\sigma}
\max \left\{  \left(\average_{B(0,1)} |w|^2\right)^{1/2},
\left(\average_{B(0,1)} |F_\ell |^q dx\right)^{1/q},
\| \text{\rm div} (w) \|_{C^\rho (B(0,1))} \right\},
\endaligned
\end{equation}
where $F_\ell (x)=\theta^{2\ell} F(\theta^\ell x)$.

We now estimate the right hand side of (\ref{3.3-6}).
Observe that by the induction assumption,
\begin{equation}\label{3.3-7}
\aligned
&\left(\average_{B(0,1)} |w|^2\right)^{1/2}\\
&
\le \theta^{\ell (1+\sigma)}
\max\left\{ \left(\average_{B(0,1)} |u_\varep|^2\right)^{1/2},
\left(\average_{B(0,1)} |F|^q\right)^{1/q},
\| g\|_{C^\rho (B(0,1))} \right\}.
\endaligned
\end{equation}
Also note that since $0<\rho=1-\frac{d}{q}$,
$$
\left(\average_{B(0,1)} |\theta^{2\ell} F (\theta^\ell x)|^q dx\right)^{1/q}
\le \theta^{\ell (1+\rho)}
\left(\average_{B(0,1)} |F|^q\right)^{1/q}.
$$
In view of (\ref{3.3-5}) and (\ref{3.3-4}), we have
$$
\text{div} (w)=\theta^\ell \left\{ g(\theta^\ell x) -\average_{B(0, \theta^\ell)} g \right\},
$$
which gives
$$
\| \text{div} (w)\|_{C^{\rho} (B(0,1))}
\le \theta^{\ell (1+\rho)} \| g\|_{C^\rho(B(0,1))}.
$$
Thus we have proved that the right hand side of (\ref{3.3-6}) is bounded by
$$
\theta^{(\ell+1) (1+\sigma)}
\max\left\{ \left(\average_{B(0,1)} |u_\varep|^2\right)^{1/2},
\left(\average_{B(0,1)} |F|^q\right)^{1/q},
\| g\|_{C^\rho (B(0,1))} \right\}.
$$

Finally, we note that the left hand side of (\ref{3.3-6}) may be written as
$$
\aligned
\bigg(\average_{B(0, \theta^{\ell+1})} \bigg|u_\varep -
&  \big( P_j^\beta +\varep \chi_j^\beta (x/\varep) \big) E_j^\beta(\varep, \ell+1) \\
& -\average_{B(0, \theta^{\ell+1})}
 \bigg[
 u_\varep -
 \big( P_j^\beta +\varep \chi_j^\beta (x/\varep) \big) E_j^\beta(\varep, \ell+1) \bigg]
 \bigg|^2\, dx \bigg)^{1/2}
\endaligned
$$
with
\begin{equation}\label{3.3-10}
E_j^\beta (\varep, \ell +1)  =E_j^\beta (\varep, \ell) +\theta^{-\ell} \average_{B(0, \theta)} \frac{\partial w^\beta}{\partial x_j}.
\end{equation}
Observe that by Cacciopoli's inequality (\ref{Ca}),
$$
\aligned
& |E (\varep, \ell+1)-E(\varep, \ell)|
\le \theta^{-\ell} \left(\average_{B(0, \theta)} |\nabla w|^2\right)^{1/2}\\
&\le C\theta^{-\ell}
\max \left\{ \left(\average_{B(0,1)} |w|^2\right)^{1/2},
\left(\average_{B(0,1)} |\theta^{2\ell} F(\theta^{2\ell} x)|^2\right)^{1/2},
\left(\average_{B(0,1)} |\text{\rm div} (w)|^2\right)^{1/2}\right\}\\
&\le C\theta^{\ell\sigma}
\max
\left\{\left(\average_{B(0,1)} |u_\varep|^2\right)^{1/2},
\left(\average_{B(0,1)} |F|^q\right)^{1/q},
\| g\|_{C^\rho (B(0,1))} \right\},
\endaligned
$$
where we have used the estimates for the right hand side of (\ref{3.3-6}) for the last inequality.
This, together with the estimate of $E(\varep, 1)$,
gives (\ref{3.3-2}) and (\ref{3.3-3}).
To see (\ref{3.3-4}), we note that by (\ref{3.3-10}) and (\ref{3.3-5}),
$$
\aligned
\sum_{j=1}^d E_j^j (\varep, \ell +1)
&=\sum_{j=1}^d E_j^j (\varep, \ell) +\theta^{-\ell} \average_{B(0,\theta)} \text{\rm div} (w)
=\average_{B(0, \theta)} g(\theta^\ell x)\, dx \\
&=\average_{B(0, \theta^{\ell+1})} g,
\endaligned
$$
This completes the proof.
\end{proof}

The following theorem may be viewed as the Lipschitz estimate for $u_\varep$, 
down to the scale $\varep$.
We use $[g]_{C^{0, \rho}(E)}$ to denote the semi-norm
$$
[g]_{C^{0, \rho}(E)}
=\sup \left\{ \frac{|g(x)-g(y)|}{|x-y|^\rho}: \ x,y\in E \text{ and } x\neq y \right\}.
$$

\begin{theorem}\label{theorem-3.2}
Suppose that $A(y)$ satisfies the ellipticity condition (\ref{ellipticity})
and is 1-periodic.
Let $(u_\varep, p_\varep)$ be a weak solution of
\begin{equation}\label{3.4-0}
\mathcal{L}_\varep (u_\varep) +\nabla p_\varep =F \quad \text{ and } \quad \text{\rm div} (u_\varep) =g
\end{equation}
in $B(x_0, R)$ for some $x_0\in \mathbb{R}^d$ and $R>2\varep$.
Then, if $\varep\le r\le (R/2)$,
\begin{equation}\label{3.4-1}
\aligned
\left(\average_{B(x_0, r)} |\nabla u_\varep|^2 \right)^{1/2}
 & \le C \bigg\{
\frac{1}{R} \left(\average_{B(x_0, R)} |u_\varep|^2\right)^{1/2}
+R \left(\average_{B(x_0, R)} |F|^q\right)^{1/q}\\
& \qquad\qquad\qquad
+ \| g\|_{L^\infty(B(x_0, R))}
+ R^\rho [g]_{C^{0, \rho} (B(x_0, R))} \bigg\},
\endaligned
\end{equation}
where $\rho \in (0, 1)$, $\rho=1-\frac{d}{q}$, and
$C$ depends only on $d$, $\mu$, and $\rho$.
\end{theorem}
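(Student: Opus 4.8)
The plan is to deduce Theorem~\ref{theorem-3.2} from the iterated estimate of Lemma~\ref{step-2} together with Cacciopoli's inequality (\ref{Ca}), in the spirit of Avellaneda--Lin. First I would carry out the usual reductions. By translation take $x_0=0$; by the dilation $v(x)=u_\varep(Rx)$ and (\ref{rescaling})--(\ref{rescaling-1}) reduce to $R=1$ (one checks that the two sides of (\ref{3.4-1}) scale in a compatible way, so the case $R=1$ gives the general case, with $\varep$, $r$ replaced by $\varep/R$, $r/R$, still satisfying $\varep\le r\le 1/2$, $\varep<1/2$). Fixing once and for all $\sigma=\rho/2$ in Lemmas~\ref{step-1}--\ref{step-2}, so that $\varep_0$, $\theta$ and all constants below depend only on $d$, $\mu$, $\rho$, and dividing $(u_\varep,p_\varep,F,g)$ by
\[
M:=\Big(\average_{B(0,1)}|u_\varep|^2\Big)^{1/2}+\Big(\average_{B(0,1)}|F|^q\Big)^{1/q}+\|g\|_{C^\rho(B(0,1))}
\]
(the case $M=0$ being trivial), it suffices to prove $\average_{B(0,r)}|\nabla u_\varep|^2\le C$ for all $\varep\le r\le 1/2$ whenever $M\le 1$.

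I would then dispose of the range where $r$ is bounded away from $0$. If $\varep\ge\varep_0$ or $r\ge\theta/2$, then $r\ge\min(\varep_0,\theta/2)$, a constant depending only on $d,\mu,\rho$; applying Cacciopoli's inequality (\ref{Ca}) with $f=0$ on $B(0,r)\subset B(0,2r)\subset B(0,1)$ and using $\average_{B(0,2r)}|u_\varep|^2\le Cr^{-d}$, $|g|\le\|g\|_{L^\infty}\le 1$, $\average_{B(0,2r)}|F|^2\le\big(\average_{B(0,2r)}|F|^q\big)^{2/q}\le Cr^{-2d/q}$ and $2-\tfrac{2d}{q}=2\rho>0$, one gets $\average_{B(0,r)}|\nabla u_\varep|^2\le C$. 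So assume now $\varep<\varep_0$ and $\varep\le r<\theta/2$. Let $k\ge1$ be the integer with $\theta^{k}\varep_0\le\varep<\theta^{k-1}\varep_0$, and apply Lemma~\ref{step-2}: there are constants $E(\varep,\ell)$, $1\le\ell\le k$, satisfying (\ref{3.3-1}), (\ref{3.3-2}), (\ref{3.3-4}) with right-hand maxima $\le1$. Choose the intermediate scale by letting $j$ be the largest integer with $\theta^{j}\ge 2r$ and setting $\ell:=\min(j,k)$, $s:=\theta^{\ell}$. Since $r<\theta/2$ we have $j\ge1$, hence $1\le\ell\le k$, and moreover $r\le s/2$ and $s\le Cr$ ($C$ depending only on $\theta,\varep_0$): if $j\le k$ this is immediate from $2r\le\theta^{j}<2r/\theta$; if $j>k$ then $s=\theta^{k}>\theta^{j}\ge 2r$ and $s=\theta^{k}\le\varep_0^{-1}\varep\le\varep_0^{-1}r$ (in this subcase $r$ is forced to be comparable to $\varep$).

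Now set, with $c_\ell:=\average_{B(0,s)}\big[u_\varep-(P_j^\beta+\varep\chi_j^\beta(x/\varep))E_j^\beta(\varep,\ell)\big]$,
\[
v:=u_\varep-\big(P_j^\beta+\varep\chi_j^\beta(x/\varep)\big)E_j^\beta(\varep,\ell)-c_\ell .
\]
Using $\mathcal{L}_\varep\big(P_j^\beta+\varep\chi_j^\beta(x/\varep)\big)+\nabla\big(\pi_j^\beta(x/\varep)\big)=0$ and $\mathrm{div}(\chi_j^\beta)=0$ from (\ref{corrector}), together with $\sum_j E_j^j(\varep,\ell)=\average_{B(0,s)}g$ from (\ref{3.3-4}), one finds in $B(0,1)$
\[
\mathcal{L}_\varep(v)+\nabla\big(p_\varep-\pi_j^\beta(x/\varep)E_j^\beta(\varep,\ell)\big)=F,\qquad \mathrm{div}(v)=g-\average_{B(0,s)}g .
\]
Apply Cacciopoli's inequality (\ref{Ca}) to $v$ on $B(0,s/2)\subset B(0,s)$ and estimate the three terms on its right-hand side: the $v$-term is at most $Cs^{-2}\average_{B(0,s)}|v|^2\le Cs^{-2}s^{2(1+\sigma)}\le C$ by (\ref{3.3-1}); the $g$-term is at most $[g]_{C^{0,\rho}(B(0,1))}^2(2s)^{2\rho}\le C$; the $F$-term is at most $Cs^{2}\big(\average_{B(0,s)}|F|^q\big)^{2/q}\le Cs^{2}s^{-2d/q}=Cs^{2\rho}\le C$. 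Hence $\average_{B(0,s/2)}|\nabla v|^2\le C$. To conclude, write $\nabla u_\varep=\nabla v+\big(\nabla P_j^\beta+(\nabla\chi_j^\beta)(x/\varep)\big)E_j^\beta(\varep,\ell)$. Since $B(0,r)\subset B(0,s/2)$ and $|B(0,s/2)|\le C|B(0,r)|$ (because $s\le Cr$), $\average_{B(0,r)}|\nabla v|^2\le C$. For the corrector term, $|E(\varep,\ell)|\le C$ by (\ref{3.3-2}); and since $r\ge\varep$ and $\nabla\chi_j^\beta$ is $1$-periodic with $\|\nabla\chi_j^\beta\|_{L^2(Y)}\le C$ by (\ref{corrector-estimate}), covering $B(0,r)$ by $O((r/\varep)^d)$ translated unit cells gives $\average_{B(0,r)}|(\nabla\chi_j^\beta)(x/\varep)|^2\le C\average_Y|\nabla\chi_j^\beta|^2\le C$. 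Combining, $\average_{B(0,r)}|\nabla u_\varep|^2\le C$, which is the desired estimate.

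The one step that genuinely requires care is the choice of the intermediate scale $s=\theta^{\ell}$: it must stay comparable to $r$ and yet have $\ell\le k$ so that the iterated estimate (\ref{3.3-1}) of Lemma~\ref{step-2} is available at that scale. This is what forces the subcase $j>k$, in which one must instead use the smallest scale $\theta^{k}$ provided by the lemma and observe that then $r$ is itself comparable to $\varep$. Everything else is a routine combination of (\ref{3.3-1}), Cacciopoli's inequality (\ref{Ca}), the bound (\ref{3.3-2}) on $E(\varep,\ell)$, and the periodicity and $L^2(Y)$-bound (\ref{corrector-estimate}) for $\chi$.
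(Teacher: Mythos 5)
Your proof is correct and takes essentially the same route as the paper: both deduce the estimate from the iteration in Lemma \ref{step-2} combined with Cacciopoli's inequality (\ref{Ca}) applied at a scale $\theta^{\ell}$ comparable to $\max(r,\varepsilon)$. The only (harmless) difference is that the paper first reduces to $r=\varepsilon$ by covering $B(x_0,r)$ with $\varepsilon$-balls and then applies Cacciopoli to $u_\varepsilon$ itself after bounding its oscillation at scale $\theta^{k-1}$, whereas you treat general $r$ directly via the intermediate scale $s=\theta^{\min(j,k)}$ and apply Cacciopoli to the corrected function $v$, adding back the corrector gradient afterwards --- a slightly more explicit bookkeeping of the same argument.
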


\begin{proof}
By covering $B(x_0, r)$ with balls of radius $\varep$ we only need to consider the case $r=\varep$.
By translation and dilation we may further assume that $x_0=0$ and $R=1$.
Thus we need to show that if $0<\varep\le (1/2)$,
\begin{equation}\label{3.4-3}
\left(\average_{B(0, \varep)}
|\nabla u_\varep|^2\right)^{1/2}
\le C\left\{ \left(\average_{B(0,1)} |u_\varep|^2\right)^{1/2}
+\left(\average_{B(0,1)} |F|^q\right)^{1/q}
+\| g\|_{C^\rho (B(0,1))} \right\}.
\end{equation}
We will see that this follows readily from Lemma \ref{step-2}.

Indeed, let $(\varep_0, \theta)$ be given by Lemma \ref{step-1}.
The case $\theta \varep_0\le \varep\le (1/2)$ follows directly from  Cacciopoli's inequality.
Suppose $0<\varep<\theta \varep_0$.
Choose $k\ge 2$ so that $\theta^k \varep_0 \le \varep< \theta^{k-1} \varep_0$.
It follows from Lemma \ref{step-2} that
\begin{equation}\label{3.4-4}
\aligned
& \left(\average_{B(0, \theta^{k-1})} \big|u_\varep -\average_{B(0, \theta^{k-1})} u_\varep\big|^2 \right)^{1/2}\\
&\qquad 
\le C\left\{ \left(\average_{B(0,1)} |u_\varep|^2\right)^{1/2}
+\left(\average_{B(0,1)} |F|^q\right)^{1/q}
+\| g\|_{C^\rho (B(0,1))} \right\}.
\endaligned
\end{equation}
This, together with the Cacciopoli's inequality, implies that
$$
\left(\average_{B(0, \theta^{k-1})}
|\nabla u_\varep|^2\right)^{1/2}
\le C\left\{ \left(\average_{B(0,1)} |u_\varep|^2\right)^{1/2}
+\left(\average_{B(0,1)} |F|^q\right)^{1/q}
+\| g\|_{C^\rho (B(0,1))} \right\},
$$
from which the estimate (\ref{3.4-3}) follows.
\end{proof}



\section{A Liouville property for Stokes systems}
\setcounter{equation}{0}

In this section we prove a Liouville property for global solutions of the Stokes systems
with periodic coefficients. We refer the reader to \cite{AL-liouv} for the case of 
the elliptic systems $\mathcal{L}_1 (u)=0$ (also see \cite{Struwe-1992, Kuchment-2007} and their references
for related work). The Liouville property for  Stokes systems with constant coefficients is well known;
however, the authors are not aware of any previous work on the Liouville property for Stokes systems
with variable coefficients.

\begin{theorem}\label{L-theorem}
Suppose that $A(y)$ satisfies the ellipticity condition (\ref{ellipticity}) and is 1-periodic.
Let $ (u, p)\in H^1_{\text loc} (\mathbb{R}^d; \mathbb{R}^d) \times L^2_{\text loc} (\mathbb{R}^d)$
be a weak solution of
\begin{equation} \label{4.1-0}
\mathcal{L}_1 (u) +\nabla p =0 \quad \text{ and } \quad \text{\rm div} (u) =g
\end{equation}
in $\mathbb{R}^d$, where $g$ is constant. Assume that
\begin{equation}\label{4.1-1}
\left(\average_{B(0, R)} |u|^2\right)^{1/2} \le C_u \, R^{1+\sigma}
\end{equation}
for some $C_u>0$, $\sigma \in (0,1)$, and for all $R>1$.
Then 
\begin{equation}\label{4.1-2}
\left\{
\aligned
u(x)  &= H+ \big( P_j^\beta (x)+\chi_j^\beta (x)\big) E_j^\beta,\\
p(x) &= \widetilde{H} +\pi_j^\beta (x) E_j^\beta
\endaligned
\right.
\end{equation}
for some constants $H\in \mathbb{R}^d$, $\widetilde{H}\in \mathbb{R}$, and  $E =(E_j^\beta) \in \mathbb{R}^{d\times d}$.
In particular, the space of functions $(u,p)$ that satisfy (\ref{4.1-0}) and (\ref{4.1-1})
is of dimension $d^2+d+1$.
\end{theorem}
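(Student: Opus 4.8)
The plan is to deduce the representation (\ref{4.1-2}) from the interior estimate of Lemma~\ref{step-2}: at the microscopic scale $\varepsilon=1$ that estimate becomes, after rescaling, a Campanato-type decay estimate for $u$ on balls $B(0,r)$ with $r\ge 1$, and the growth bound (\ref{4.1-1}) then forces the relevant decay quantity to vanish identically. First I would record that the linear map $(H,\widetilde H,E)\mapsto \big(H+(P_j^\beta+\chi_j^\beta)E_j^\beta,\ \widetilde H+\pi_j^\beta E_j^\beta\big)$ sends $\mathbb{R}^d\times\mathbb{R}\times\mathbb{R}^{d\times d}$ into the solution set of (\ref{4.1-0}) (with $g=\sum_j E_j^j$), by the corrector equations (\ref{corrector}), and that it is injective, since a nonconstant linear function cannot be cancelled by a $1$-periodic one. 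Hence it suffices to show that every $(u,p)$ satisfying (\ref{4.1-0})--(\ref{4.1-1}) lies in its range; the dimension count $d^2+d+1$ is then immediate. Subtracting $\tfrac{g}{d}\sum_j(P_j^j+\chi_j^j)$ from $u$ (and $\tfrac{g}{d}\sum_j\pi_j^j$ from $p$), which preserves (\ref{4.1-1}) with a larger constant and makes the new velocity divergence free, I may assume $g=0$. I would then fix $\sigma'\in(\sigma,1)$, pick $\rho'\in(\sigma',1)$ and $q$ with $\rho'=1-\tfrac{d}{q}$, and let $(\varepsilon_0,\theta)$ be the constants of Lemma~\ref{step-1} for the pair $(\sigma',\rho')$.

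For $r\ge 1$ I would set $\Psi(r)=\inf\big\{\big(\average_{B(0,r)}|u-H-(P_j^\beta+\chi_j^\beta)E_j^\beta|^2\big)^{1/2}:\ H\in\mathbb{R}^d,\ E\in\mathbb{R}^{d\times d},\ \textstyle\sum_j E_j^j=0\big\}$; the infimum is attained because the comparison class is a closed affine subspace of $L^2(B(0,r);\mathbb{R}^d)$. Given $R\ge 1$ I would take a minimizer $(H_R,\mathcal{E}_R)$, put $\tilde u(y)=u(y)-H_R-(P_j^\beta(y)+\chi_j^\beta(y))(\mathcal{E}_R)_j^\beta$, so $\tilde u$ solves $\mathcal{L}_1(\tilde u)+\nabla\tilde p=0$, $\text{\rm div}(\tilde u)=0$ in $\mathbb{R}^d$ with $\big(\average_{B(0,R)}|\tilde u|^2\big)^{1/2}=\Psi(R)$, and rescale: by (\ref{rescaling})--(\ref{rescaling-1}) the function $v(x):=\tilde u(Rx)$ solves $\mathcal{L}_{1/R}(v)+\nabla\pi=0$, $\text{\rm div}(v)=0$ in $B(0,1)$ with $\big(\average_{B(0,1)}|v|^2\big)^{1/2}=\Psi(R)$, vanishing right-hand side and vanishing divergence. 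Thus the normalizing maximum on the right of (\ref{3.3-1}) is exactly $\Psi(R)$. Applying Lemma~\ref{step-2} to $v$ with $\varepsilon=1/R$ (legitimate for every $\ell$ with $\theta^{\ell-1}\varepsilon_0>1/R$), using $\varepsilon\chi_j^\beta(x/\varepsilon)=R^{-1}\chi_j^\beta(Rx)$, and undoing the change of variables $y=Rx$ (under which $B(0,\theta^\ell)$ becomes $B(0,R\theta^\ell)$ and corrected affine functions in $x$ become corrected affine functions in $y$ with vanishing trace), I would obtain $\Psi(r)\le (r/R)^{1+\sigma'}\Psi(R)$ whenever $r=R\theta^\ell$ with $\ell$ admissible. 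A comparison of balls of comparable radius then upgrades this to: there are $r_0\ge 1$ and $C_3>0$, depending only on $d$, $\mu$, $\sigma$, $\rho'$, with $\Psi(r)\le C_3(r/R)^{1+\sigma'}\Psi(R)$ for all $r_0\le r\le R$.

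To conclude, since $\Psi(R)\le\big(\average_{B(0,R)}|u|^2\big)^{1/2}\le C_u R^{1+\sigma}$ (take $H=E=0$), I would fix $r\ge r_0$ and let $R\to\infty$ in $\Psi(r)\le C_3C_u\,r^{1+\sigma'}R^{\sigma-\sigma'}$; since $\sigma'>\sigma$ this gives $\Psi(r)=0$ for every $r\ge r_0$, so for each such $r$ there are $H_r,E_r$ with $\sum_j(E_r)_j^j=0$ and $u=H_r+(P_j^\beta+\chi_j^\beta)(E_r)_j^\beta$ a.e.\ on $B(0,r)$. Comparing two radii, the difference $(P_j^\beta+\chi_j^\beta)\big((E_{r'})_j^\beta-(E_r)_j^\beta\big)$ is constant on $B(0,r)$; taking its gradient and averaging over a unit cell (where $\nabla\chi_j^\beta$ has mean zero) forces $E_{r'}=E_r$ and then $H_{r'}=H_r$, so these constants are independent of $r$ and $u=H+(P_j^\beta+\chi_j^\beta)E_j^\beta$ on all of $\mathbb{R}^d$. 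Finally the corrector equations give $\mathcal{L}_1(u)=-\nabla(\pi_j^\beta E_j^\beta)$, so (\ref{4.1-0}) yields $\nabla\big(p-\pi_j^\beta E_j^\beta\big)=0$ on the connected set $\mathbb{R}^d$, hence $p=\widetilde H+\pi_j^\beta E_j^\beta$ for a constant $\widetilde H$; undoing the reduction to $g=0$ gives (\ref{4.1-2}), and the injectivity noted at the outset gives the dimension $d^2+d+1$.

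The step I expect to be most delicate is the rescaling: one must arrange that the normalizing maximum in (\ref{3.3-1}) comes out to be exactly $\Psi(R)$ — which is why the best corrected-affine approximation is subtracted and the constraint $\sum_j E_j^j=g$ is imposed before rescaling — and, more importantly, one must take the decay exponent $\sigma'$ in Lemma~\ref{step-2} strictly larger than the growth exponent $\sigma$, so that the surplus factor $R^{\sigma-\sigma'}$ kills the right-hand side as $R\to\infty$. The remaining ingredients (attainment of the infimum, the ball-comparison upgrade, and the rigidity of a linear-plus-periodic function being constant) are routine.
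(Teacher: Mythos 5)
Your proposal is correct and follows essentially the same route as the paper: rescale to the unit ball with $\varepsilon=1/R$, apply the iteration Lemma \ref{step-2} with a decay exponent $\sigma_1=\sigma'>\sigma$, and let $R\to\infty$ so that the growth hypothesis (\ref{4.1-1}) forces the Campanato-type excess to vanish on every fixed ball, after which the rigidity of linear-plus-periodic functions and the corrector equation for the pressure finish the argument. The only differences are cosmetic repackagings — you reduce to $g=0$ and phrase the key step as an excess-decay inequality for a minimized quantity $\Psi(r)$, whereas the paper keeps $g$ (absorbing it as the $\varepsilon^{-1}|g|$ term in the maximum) and takes the limit directly in (\ref{4.1-3}).
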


\begin{proof}
Fix $\sigma_1\in (\sigma,1)$. Let
$(\varep_0, \theta)$ be the constants given by Lemma \ref{step-1} for $0<\sigma_1<\rho<1$.
Suppose that $(u,p)$ is a solution of (\ref{4.1-0}) in $\mathbb{R}^d$ for some constant $g$.
Let $u_\varep(x) =u(x/\varep )$ and $p_\varep (x)=\varep^{-1} p( x/\varep )$.
Then $\mathcal{L}_\varep (u_\varep) +\nabla p_\varep=0$ and
$\text{div}(u_\varep) (x) =\varep^{-1} g$ in $B(0,1)$.
It follows from Lemma \ref{step-2} that 
if $0<\varep<\theta^{k-1} \varep_0$ for some $k\ge 1$, then
$$
\aligned
 \inf_{\substack{E=(E_j^\beta)\in \mathbb{R}^{d\times d}\\ H\in \mathbb{R}^d}}
& \left(\average_{B(0, \theta^\ell)} 
\big |u_\varep - \big( P_j^\beta  +\varep \chi_j^\beta (x/\varep) \big) E_j^\beta -H\big |^2 \right)^{1/2}\\
&\le \theta^{\ell (1+\sigma_1)}
\max \left\{ \left(\average_{B(0,1)} |u_\varep|^2\right)^{1/2},
\varep^{-1} | g|  \right\},
\endaligned
$$
where $1\le \ell \le k$.
By a change of variables this gives
\begin{equation}\label{4.1-3}
\aligned
 \inf_{\substack{E=(E_j^\beta)\in \mathbb{R}^{d\times d}\\ H\in \mathbb{R}^d}}
& \left(\average_{B(0, \varep^{-1} \theta^\ell)} 
\big |u - \big( P_j^\beta  + \chi_j^\beta (x ) \big) E_j^\beta -H\big |^2 \right)^{1/2}\\
&\le \theta^{\ell (1+\sigma_1)}
\max \left\{ \left(\average_{B(0,\varep^{-1} )} |u|^2\right)^{1/2},
\varep^{-1} | g|  \right\},
\endaligned
\end{equation}
where $0<\varep<\theta^{k-1} \varep_0$ for some $k\ge 1$ and $1\le \ell\le k$.

Now, suppose that $u$ satisfies the growth condition (\ref{4.1-1}).
 For any $m\ge 1$ such that $\theta^{m+1}<\varep_0$, let $\varep=\theta^{m+\ell}$, where $\ell>1$.
It follows from (\ref{4.1-3}) and (\ref{4.1-1})  that
\begin{equation}\label{4.1-4}
\aligned
\inf_{\substack{ E=(E_j^\beta)\in \mathbb{R}^{d\times d}\\ H\in \mathbb{R}^d}}
 &\left(\average_{B(0, \theta^{-m})}\left |u -\big( P_j^\beta +\chi_j^\beta  \big)E_j^\beta 
-H\right |^2\right)^{1/2}\\
& \le \theta^{\ell (1+\sigma_1)} 
\max \Big\{ C (\varep^{-1})^{1+\sigma}, \varep^{-1} |g|\Big\}\\
&= \theta^{\ell (1+\sigma_1)} 
\max \Big \{ C \theta^{-(m+\ell) (1+\sigma)}, \theta^{-(m+\ell)}  |g|\Big\},
\endaligned
\end{equation}
for some constant $C$ independent of $m$ and $\ell $.
Since $\sigma_1>\sigma$, we may fix $m$ and let $\ell \to \infty$ in (\ref{4.1-4}) to conclude that
the left hand side of (\ref{4.1-4}) is zero.
Thus, for each $m$ large, there exist constants $H^m \in \mathbb{R}^d$ and $E^m = (E_j^{m\beta})\in \mathbb{R}^{d\times d}$
such that
$$
u(x) =H^m + \big( P_j^\beta  (x)+\chi_j^\beta (x) \big) E_j^{m\beta} \qquad \text{ in } B(0, \theta^{-m}).
$$

Finally, we observe that $\nabla u= (\nabla P_j^\beta +\nabla \chi_j^{\beta} )E_j^{m\beta}$ 
and since $\int_Y \nabla \chi_j^\beta=0$,
$$
\int_Y \nabla u=\int_Y \nabla P_j^\beta \cdot E_j^{m\beta}.
$$
This implies that $E_j^{m\beta}=E_j^{n \beta}$ for any $m, n$ large; and as a consequence,
we also obtain $H^m =H^n$ for any $m, n$ large.
Thus we have proved that (\ref{4.1-2}) holds for some $H\in \mathbb{R}^d$ and $E=(E_j^\beta)\in 
\mathbb{R}^{d\times d}$.
Note that if $H+( P_j^\beta +\chi_j^\beta ) E_j^\beta =0$ in $\mathbb{R}^d$,
then $\int_Y  \nabla P_j^\beta \cdot E_j^\beta=0$.
It follows that $E_j^\beta=0$ and hence, $H=0$.
This shows that  the space of functions $(u, p)$ that satisfy (\ref{4.1-0})-(\ref{4.1-1})
is of dimension $d^2+d+1$.
\end{proof}

\begin{remark}\label{remark-4.1}
{\rm
Suppose that $(u, p)$ satisfies (\ref{4.1-0}) in $\mathbb{R}^d$ for some constant $g$ and that
\begin{equation}\label{4.2-1}
\left(\average_{B(0, R)} |u|^2\right)^{1/2} \le C_u \, R^{\sigma}
\end{equation}
for some $C_u>0$, $\sigma \in (0,1)$, and for all $R>1$.
It follows from Theorem \ref{L-theorem} that $(u, p)$ must be constant.
}
\end{remark}

\begin{remark}
{\rm 
One may use the results in Theorem \ref{L-theorem} and a line of argument used in \cite{Struwe-1992}
to characterize all solutions of (\ref{4.1-0}) in $\mathbb{R}^d$ that satisfy the growth condition
\begin{equation}\label{4.3-1}
\left(\average_{B(0, R)} |u|^2\right)^{1/2} \le C_u \, R^{N+\sigma}
\end{equation}
for some $C_u>0$, integer $N\ge 2$, $\sigma \in (0,1)$, and for all $R>1$.
In particular, by using the difference operator $\Delta_i \phi
= \phi (x+e_i) -\phi (x)$  repeatedly,
one may deduce from the observation in Remark \ref{remark-4.1} that
$$
u^\alpha (x)
=\sum_{|\nu|=N} E(\nu, \alpha) x^\nu + \sum_{0\le |\nu|\le N-1} w_{\nu, \alpha} (x) x^\nu,
$$
where  $E(\nu, \alpha)$ is constant and $w_{\nu, \alpha} (x)$ is 1-periodic. Here
$\nu=(\nu_1, \nu_2, \dots, \nu_d)$ is a multi-index and 
$x^\nu=x_1^{\nu_1}x_2^{\nu_2} \cdots x_d^{\nu_d}$.
We will pursue this line of research elsewhere.
}
\end{remark}



\section{$L^\infty$ estimates for $p_\varep$ and proof of Theorem \ref{main-theorem-1}}
\setcounter{equation}{0}

In this section we prove an $L^\infty$ estimate for $p_\varep$, down to the scale $\varep$.
We also give the proof of Theorem \ref{main-theorem-1} and Corollary \ref{corollary-1}.

\begin{theorem}\label{theorem-5.1}
Suppose that $A(y)$ satisfies the ellipticity condition (\ref{ellipticity}) and
is 1-periodic. 
Let $(u_\varep, p_\varep)$ be a weak solution of
\begin{equation}\label{5.1-0}
\mathcal{L}_\varep (u_\varep) +\nabla p_\varep =F \quad \text{ and } \quad \text{\rm div} (u_\varep) =g
\end{equation}
in $B(x_0, R)$ for some $x_0\in \mathbb{R}^d$ and $R>\varep$.
Then, if $\varep\le r< R$,
\begin{equation}\label{5.1-1}
\aligned
 \left(\average_{B(x_0, r)} |p_\varep -\average_{B(x_0, R)} p_\varep|^2 \right)^{1/2}
&  \le C \bigg\{
 \left(\average_{B(x_0, R)} |\nabla u_\varep|^2\right)^{1/2}
+R \left(\average_{B(x_0, R)} |F|^q\right)^{1/q}\\
&\qquad\qquad
+ \| g\|_{L^\infty(B(x_0, R))}
+ R^\rho [g]_{C^{0, \rho} (B(x_0, R))} \bigg\},
\endaligned
\end{equation}
where $\rho \in (0, 1)$, $\rho=1-\frac{d}{q}$, and
$C$ depends only on $d$, $\mu$ and $\rho$.
\end{theorem}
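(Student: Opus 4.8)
The plan is to prove the estimate at every scale $r\in[\varep,R)$ by first reducing, via translation and the rescaling identities (\ref{rescaling})--(\ref{rescaling-1}), to the case $x_0=0$, $R=1$; after replacing $u_\varep$ by $u_\varep-\average_{B(0,1)}u_\varep$ (which changes neither the system nor $p_\varep-\average_{B(0,1)}p_\varep$) and invoking Poincar\'e's inequality, it suffices to bound
\begin{equation*}
\left(\average_{B(0,r)}\big|p_\varep-\average_{B(0,1)}p_\varep\big|^2\right)^{1/2}\le C\,M,\qquad
M:=\left(\average_{B(0,1)}|\nabla u_\varep|^2\right)^{1/2}+\left(\average_{B(0,1)}|F|^q\right)^{1/q}+\|g\|_{C^\rho(B(0,1))},
\end{equation*}
for $\varep\le r<1$. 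The elementary inequality $\|q-\average_B q\|_{L^2(B)}\le C\|\nabla q\|_{H^{-1}(B)}$ on a ball $B$ (already used in the proof of Lemma \ref{step-1}), applied to $p_\varep$ on $B(0,1)$ together with the first equation in (\ref{5.1-0}), gives $\big(\average_{B(0,1)}|p_\varep-\average_{B(0,1)}p_\varep|^2\big)^{1/2}\le C\,M$; this disposes of all $r$ bounded away from $0$, so we may assume $\varep\le r<\theta/2$, where $\theta,\varep_0$ are the constants of Lemma \ref{step-1}, and then Lemma \ref{step-2} is available up to a depth $k$ with $\theta^k\varep_0\le\varep<\theta^{k-1}\varep_0$.

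For each admissible $\ell$, write $B_\ell=B(0,\theta^\ell)$ and set
\begin{equation*}
w_\ell=u_\varep-\big(P_j^\beta+\varep\chi_j^\beta(x/\varep)\big)E_j^\beta(\varep,\ell)-\average_{B_\ell}\Big[u_\varep-\big(P_j^\beta+\varep\chi_j^\beta(x/\varep)\big)E_j^\beta(\varep,\ell)\Big],\qquad q_\ell=p_\varep-\pi_j^\beta(x/\varep)E_j^\beta(\varep,\ell),
\end{equation*}
where $(\chi_j^\beta,\pi_j^\beta)$ are the correctors (\ref{corrector}) and $E(\varep,\ell)$ the constants from Lemma \ref{step-2}. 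Since $\mathcal L_\varep\big(P_j^\beta+\varep\chi_j^\beta(x/\varep)\big)+\nabla\big[\pi_j^\beta(x/\varep)\big]=0$ and $\sum_{j}E_j^j(\varep,\ell)=\average_{B_\ell}g$ by (\ref{3.3-4}), the pair $(w_\ell,q_\ell)$ solves $\mathcal L_\varep(w_\ell)+\nabla q_\ell=F$, $\text{div}(w_\ell)=g-\average_{B_\ell}g$ in $B(0,1)$. Applying Caccioppoli's inequality (\ref{Ca}) on concentric balls $\tfrac12 B_\ell\subset B_\ell$, and feeding in $\average_{B_\ell}|w_\ell|^2\le C\theta^{2\ell(1+\sigma)}M^2$ from (\ref{3.3-1}), the bound $\|g-\average_{B_\ell}g\|_{L^\infty(B_\ell)}\le C\theta^{\ell\rho}\|g\|_{C^\rho}$, and $\average_{B_\ell}|F|^2\le C\theta^{-2\ell d/q}M^2$ (H\"older), together with $\rho=1-\tfrac dq>\sigma$, we obtain, with $c_\ell:=\average_{\frac12 B_\ell}q_\ell$,
\begin{equation*}
\average_{\frac12 B_\ell}\big|p_\varep-c_\ell-\pi_j^\beta(x/\varep)E_j^\beta(\varep,\ell)\big|^2\le C\,\theta^{2\ell\sigma}M^2.
\end{equation*}
This is the local expansion of the pressure at scale $\theta^\ell$.

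The decisive point --- and the reason the pressure is controllable even though the corrector $\pi_j^\beta$ need not be bounded --- is that $\int_Y\pi_j^\beta=0$: this gives $\big|\average_{B(0,s)}\pi_j^\beta(x/\varep)\big|\le C\varep/s$ (only full periods survive in the average; the boundary layer contributes $O(\varep/s)$), while still $\average_{B(0,s)}|\pi_j^\beta(x/\varep)|^2\le C$ for $s\ge\varep$. Choose $\ell^*$ to be essentially the integer with $\theta^{\ell^*}\approx 2r$, capped at the depth $k$; a routine check shows $B(0,r)\subset\tfrac12 B_{\ell^*}$ with $|\tfrac12 B_{\ell^*}|\le C|B(0,r)|$ and $\theta^{\ell}\ge 2\varep$ for $1\le\ell\le\ell^*$. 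Comparing the expansions at levels $\ell$ and $\ell+1$ on $\tfrac12 B_{\ell+1}$ and using $|E(\varep,\ell+1)-E(\varep,\ell)|\le C\theta^{\ell\sigma}M$ from (\ref{3.3-3}) yields $|c_{\ell+1}-c_\ell|\le C\theta^{\ell\sigma}M$; moreover $|E(\varep,\ell)|\le C M$ from (\ref{3.3-2}) and the mean-zero bound give $\big|\average_{\frac12 B_\ell}\pi_j^\beta(x/\varep)E_j^\beta(\varep,\ell)\big|\le C\varep\theta^{-\ell}M$. Since $\average_{\frac12 B_\ell}p_\varep=c_\ell+\average_{\frac12 B_\ell}\pi_j^\beta(x/\varep)E_j^\beta(\varep,\ell)$, telescoping from $\ell=1$ to $\ell=\ell^*$, summing the two geometric series $\sum_\ell\theta^{\ell\sigma}$ and $\varep\sum_\ell\theta^{-\ell}\le C\varep\theta^{-\ell^*}\le C\varep/r\le C$, and comparing the fixed ball $\tfrac12 B_1$ with $B(0,1)$ by the Ne\v{c}as inequality again, we get $|c_{\ell^*}-\average_{B(0,1)}p_\varep|\le C\,M$.

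Finally, writing
\begin{equation*}
p_\varep-\average_{B(0,1)}p_\varep=\big(p_\varep-c_{\ell^*}-\pi_j^\beta(x/\varep)E_j^\beta(\varep,\ell^*)\big)+\pi_j^\beta(x/\varep)E_j^\beta(\varep,\ell^*)+\big(c_{\ell^*}-\average_{B(0,1)}p_\varep\big)
\end{equation*}
and averaging $|\cdot|^2$ over $B(0,r)\subset\tfrac12 B_{\ell^*}$: the first term is $\le C\theta^{2\ell^*\sigma}M^2\le CM^2$, the middle term is $\le C|E(\varep,\ell^*)|^2\average_{B(0,r)}|\pi_j^\beta(x/\varep)|^2\le CM^2$ since $r\ge\varep$, and the last is $\le CM^2$ by the previous paragraph. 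Undoing the rescaling, which by (\ref{rescaling})--(\ref{rescaling-1}) converts $M$ into the right-hand side of (\ref{5.1-1}), completes the proof. I expect the main obstacle to be exactly the heart of the argument just sketched: extracting from the $L^2$ Caccioppoli inequality the pressure expansion $p_\varep\approx(\text{const})+\pi_j^\beta(x/\varep)E_j^\beta(\varep,\ell)$, whose corrector part is neither small nor $L^\infty$-bounded, and then exploiting $\int_Y\pi_j^\beta=0$ so that the averages $\average_{\frac12 B_\ell}p_\varep$ telescope across the $\sim\log(r/\varep)$ intermediate scales at the cost of only a convergent series; the scaling reductions, the H\"older manipulations of the $F$- and $g$-terms, and the Ne\v{c}as estimate at the top scale are routine.
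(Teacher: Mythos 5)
Your proposal is correct and follows essentially the same route as the paper's proof of Theorem \ref{theorem-5.1}: after reducing to $x_0=0$, $R=1$, you combine Lemma \ref{step-2} with Caccioppoli's inequality and the Ne\v{c}as estimate $\|p-\average_B p\|_{L^2(B)}\le C\|\nabla p\|_{H^{-1}(B)}$ to get the pressure expansion $p_\varep\approx c_\ell+\pi_j^\beta(x/\varep)E_j^\beta(\varep,\ell)$ at each scale $\theta^\ell$, and then telescope across the $\sim\log(r/\varep)$ scales, exploiting $\int_Y\pi_j^\beta=0$ so that the corrector-pressure averages contribute only the convergent series $\sum_\ell(\theta^{\ell\sigma}+\varep\theta^{-\ell})\le C$, exactly as in (\ref{5.1-3})--(\ref{5.1-6}). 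The only cosmetic difference is that the paper first reduces matters to the single quantity $|\average_{B(0,r)}p_\varep-\average_{B(0,1)}p_\varep|$ (handling the oscillation of $p_\varep$ on $B(0,r)$ by the Ne\v{c}as inequality and Theorem \ref{theorem-3.2}) and telescopes the averages $\average_{B(0,\theta^\ell)}p_\varep$ directly, whereas you telescope the constants $c_\ell$ and keep $\pi_j^\beta(x/\varep)E_j^\beta(\varep,\ell^*)$ in the final decomposition, bounding it in $L^2(B(0,r))$ via $r\ge\varep$.
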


\begin{proof}
By translation and dilation we may assume that $x_0=0$ and $R=1$.
Note that
\begin{equation}\label{5.1-2}
\aligned
\|p_\varep -\average_{B(0,r)} p_\varep\|_{L^2(B(0,r))}
&\le C\, \| \nabla p_\varep\|_{H^{-1} (B(0,r))}\\
& \le C \Big\{ \|\nabla u_\varep\|_{L^2(B(0,r))}
+\| F\|_{H^{-1}(B(0,r))}\Big\},
\endaligned
\end{equation}
where we have used the first equation in (\ref{5.1-0}) for the second inequality.
Thus, in view of Theorem \ref{theorem-3.2}, it suffices to show that
$\big|\average_{B(0,r)}  p_\varep-\average_{B(0,1)} p_\varep\big|$ 
is bounded by the right hand side of
(\ref{5.1-1}). This will be done by using the $C^{1, \sigma}$ estimate for $u_\varep$ down to the scale $\varep$ in 
Lemma \ref{step-2}.

Let $(\theta, \varep_0)$ be the constants given by Lemma \ref{step-1}.
By (\ref{5.1-2}) we may assume that $0<\varep\le r<\varep_0$.
Let $\theta^k \varep_0 \le \varep<\theta^{k-1} \varep_0$ and
$\theta^t \varep_0\le r < \theta^{t-1}\varep_0$ for some $1\le t \le k$.
The terms $\average_{B(0,r)} p_\varep -\average_{B(0, \theta^t)} p_\varep$
and $\average_{B(0,1)} p_\varep -\average_{B(0,\theta)} p_\varep$
can be handled by using (\ref{5.1-2}).
To deal with $\average_{B(0,\theta^t)} p_\varep -\average_{B(0,\theta)} p_\varep$, we write
\begin{equation}\label{5.1-3}
\int_{B(0, \theta^t )} p_\varep -\int_{B(0, \theta)} p_\varep
=\sum_{\ell=1}^{t-1} \left\{ \average_{B(0, \theta^{\ell+1})} p_\varep -\average_{B(0, \theta^{\ell})} p_\varep\right\}.
\end{equation}
Let
$$
\aligned
v_\ell=u_\varep (x) &-\big( P_j^\beta (x) +\varep \chi_j^\beta (x/\varep) \big) E_j^\beta (\varep, \ell)\\
&-\average_{B(0,\theta^\ell)} 
\left\{ u_\varep (x) -\big( P_j^\beta (x) +\varep \chi_j^\beta (x/\varep) \big) E_j^\beta (\varep, \ell)\right\} dx,
\endaligned
$$
where $E(\varep, \ell)= (E_j^\beta(\varep, \ell) )\in \mathbb{R}^{d\times d}$ are constants given by
Lemma  \ref{step-2}.
Note that by Lemma \ref{step-2},
\begin{equation}\label{5.1-3-1}
\aligned
& \left(\average_{B(0, \theta^\ell)} |v_\ell|^2 \right)^{1/2}\\
&\le \theta^{\ell(1+\sigma)}
\max \left\{ \left(\average_{B(0,1)} |u_\varep|^2\right)^{1/2},
\left(\average_{B(0,1)} |F|^q\right)^{1/q},
\| g\|_{C^\rho(B(0,1))} \right\},
\endaligned
\end{equation}
where $0<\sigma<\rho<1$, and
\begin{equation}\label{5.1-4}
\left\{\aligned
&\mathcal{L}_\varep (v_\ell)+\nabla \left\{ p_\varep -\pi_j^\beta (x/\varep) E_j^\beta (\varep, \ell) \right\}=F,\\
&\text{\rm div} (v_\ell) =g -\average_{B(0, \theta^\ell)} g,
\endaligned
\right.
\end{equation}
in $B(0,1)$.
Observe that for any $H\in \mathbb{R}$,
\begin{equation}\label{5.1-5}
\aligned
&\left|\average_{B(0, \theta^{\ell+1})} p_\varep -\average_{B(0, \theta^\ell)} p_\varep \right|\\
&\le
\left|\average_{B(0, \theta^{\ell+1})}
\big[ p_\varep -H -\pi_j^\beta (x/\varep) E_j^\beta (\varep, \ell) \big] dx\right|\\
&\qquad\qquad\qquad
+\left|\average_{B(0, \theta^\ell)} \big[ p_\varep -H-\pi_j^\beta (x/\varep) E_j^\beta (\varep, \ell) \big] dx \right|\\
&\qquad\qquad\qquad
+ |E_j^\beta (\varep, \ell)|
\left|\average_{B(0, \theta^{\ell+1})}
\pi_j^\beta (x/\varep) dx
-\average_{B(0, \theta^{\ell})}
\pi_j^\beta (x/\varep) dx \right|.
\endaligned
\end{equation}
Choose
$$
H=\average_{B(0, \theta^\ell)} \big[ p_\varep -\pi_j^\beta (x/\varep) E_j^\beta (\varep, \ell) \big] dx
$$
so that the second term in the right hand side of (\ref{5.1-5}) equals to zero.
Using (\ref{5.1-2}), (\ref{5.1-4}), Cacciopoli's inequality and (\ref{5.1-3-1}),
we see that the first term in the right hand side of (\ref{5.1-5}) is bounded by
$$
\aligned
& C \left( \average_{B(0, \theta^\ell)} \big| p_\varep -H-\pi_j^\beta (x/\varep) E_j^\beta (\varep, \ell) \big|^2 dx\right)^{1/2}\\
&\qquad\qquad \le C \theta^{-d\ell /2}
\Big\{ \|\nabla v_\ell\|_{L^2(B(0, \theta^{\ell}))}
+\|F\|_{H^{-1}(B(0, \theta^\ell))} \Big\}\\
& \qquad \qquad 
\le C\, \theta^{\ell\sigma}
\max \left\{ \left(\average_{B(0,1)} |u_\varep|^2\right)^{1/2},
\left(\average_{B(0,1)} |F|^q\right)^{1/q},
\| g\|_{C^\rho(B(0,1))} \right\},
\endaligned
$$
where we also used $q>d$, $0<\sigma<\rho=1-\frac{d}{q}$, and
$$
\aligned
\| F\|_{H^{-1} (B(0, \theta^\ell))}  &
\le C |B(0, \theta^\ell)|^{\frac{1}{2} +\frac{1}{d} }
\left(\average_{B(0, \theta^\ell)} |F|^q\right)^{1/q}\\
&\le C \theta^{\ell (\frac{d}{2} +\rho)} 
\left(\average_{B(0,1)} |F|^q\right)^{1/q}.
\endaligned
$$

Finally, we note that since $\pi_j^\beta$ is 1-periodic,
\begin{equation}\label{5.1-6}
\aligned
&\left|\average_{B(0, \theta^{\ell+1})}
\pi_j^\beta (x/\varep) dx
-\average_{B(0, \theta^{\ell})}
\pi_j^\beta (x/\varep) dx \right|\\
 &=\left|\average_{B(0, \varep^{-1} \theta^{\ell+1})} \pi_j^\beta -\langle \pi_j^\beta\rangle \right| +
\left|\average_{B(0, \varep^{-1} \theta^\ell)} \pi_j^\beta -\langle \pi_j^\beta \rangle \right|\\
& \le C \varep \theta^{-\ell} \| \pi_j^\beta\|_{L^2(Y)}\\
&\le C \, \varep  \theta^{-\ell},
\endaligned
\end{equation}
where $\langle \pi_j^\beta\rangle=\average_Y \pi_j^\beta$.
This, together with the estimate of the first two terms in the right hand side of (\ref{5.1-5}),
shows that the left hand side of (\ref{5.1-3}) is bounded by
$$
\aligned
& C  \sum_{\ell=1}^{t-1} \big( \theta^{\ell \sigma}
+\varep  \theta^{-\ell} \big)
\max \left\{ \left(\average_{B(0,1)} |u_\varep|^2\right)^{1/2},
\left(\average_{B(0,1)} |F|^q\right)^{1/q},
\| g\|_{C^\rho(B(0,1))} \right\}\\
& 
\le C\max \left\{ \left(\average_{B(0,1)} |u_\varep|^2\right)^{1/2},
\left(\average_{B(0,1)} |F|^q\right)^{1/q},
\| g\|_{C^\rho(B(0,1))} \right\},
\endaligned
$$
This completes the proof.
\end{proof}

\begin{proof}[\bf Proof of Theorem \ref{main-theorem-1}]
The estimate for $\nabla u_\varep $ in (\ref{main-estimate-1}) is given by Theorem  \ref{theorem-3.2}, while
the estimate for $ p_\varep$  is contained in Theorem \ref{theorem-5.1}.
\end{proof}

\begin{proof}[\bf Proof of Corollary \ref{corollary-1}]
Under the H\"older continuous condition (\ref{holder}),
it is known that solutions of the Stokes systems are locally $C^{1, \alpha}$ for $\alpha<\lambda$
(see \cite{Gia-1982}).
In particular, it follows that if
$(u, p)$ is a weak solution of $-\text{\rm div}(A(x)\nabla u) +\nabla p =F$ 
and $\text{div} (u)=g$ in $B(y,1)$ for some $y\in \mathbb{R}^d$, then
\begin{equation}\label{5.2-1}
\aligned
&\| \nabla u\|_{L^\infty (B(y, 1/2))}
+\| p-\average_{B(y,1/2)} p\|_{L^\infty(B(y,1/2))}\\
&\le C\left\{
\left(\average_{B(y, 1)} |\nabla u|^2\right)^{1/2}
+ \left(\average_{B(y, 1)} |F|^q\right)^{1/q}
+\| g\|_{C^\rho(B(y, 1))} \right\},
\endaligned
\end{equation}
where $0<\rho< 1$, $\rho=1-\frac{d}{q}$, and the constant $C$ depends only on
$d$, $\mu$, $\rho$, and $(\lambda, \tau)$ in (\ref{holder}).

To prove (\ref{Lip-estimate}), by translation and dilation, we may assume that $x_0=0$ and $R=1$.
Now suppose $(u_\varep, p_\varep)$ is a weak solution of (\ref{Stokes}) in $B(0,1)$.
The estimate (\ref{Lip-estimate}) for the case $\varep\ge (1/8)$ follows directly from (\ref{5.2-1}), as the matrix 
$A(x/\varep)$ satisfies (\ref{holder}) uniformly in $\varep$.
For $0<\varep<(1/8)$, we use a blow-up argument and estimate
(\ref{5.2-1}) by considering $u (x)=\varep^{-1} u_\varep (\varep x)$ and $p(x)=p_\varep (\varep x)$.
This leads to
\begin{equation}\label{5.2-2}
\aligned
&\|\nabla u_\varep\|_{L^\infty(B(y, \varep))} +\| p_\varep-\average_{B(y, \varep)} p_\varep\|_{L^\infty(B(y, \varep))}\\
&\le C \left\{
\left(\average_{B(y, 2\varep)} |\nabla u_\varep|^2\right)^{1/2}
+\varep \left(\average_{B(y,2\varep )} |F|^q\right)^{1/q}
+\| g\|_{C^\rho(B(y,2\varep))} \right\},
\endaligned
\end{equation}
for any $y\in B(0, 1/2)$. In view of Theorem \ref{theorem-3.2} we obtain
\begin{equation}\label{5.2-3}
\aligned
& \|\nabla u_\varep\|_{L^\infty(B(0,1/2))}+\| p_\varep-\average_{B(y, \varep)} p_\varep\|_{L^\infty(B(y, \varep))}\\
&\le C
\left\{
\left(\average_{B(0, 1)} |\nabla u_\varep|^2\right)^{1/2}
+\left(\average_{B(0,1)} |F|^q\right)^{1/q}
+\| g\|_{C^\rho(B(0,1))} \right\}.
\endaligned
\end{equation}

Finally, we note that for any $y\in B(0,1/2)$,
\begin{equation*}
\aligned
& |p_\varep (y)-\average_{B(0, 1)} p_\varep |\\
&\le |p_\varep (y) -\average_{B(y, \varep)} p_\varep|
+|\average_{B(y, \varep)} p_\varep
-\average_{B(y, 1/2)} p_\varep|
+|\average_{B(y, 1/2)} p_\varep -\average_{B(0,1)} p_\varep|\\
&\le 
|p_\varep (y) -\average_{B(y, \varep)} p_\varep|
+\left(\average_{B(y, \varep)} |p_\varep -\average_{B(y, 1/2)} p_\varep|^2 \right)^{1/2}
+\left(\average_{B(0,1)} |p_\varep -\average_{B(0,1)} p_\varep|^2\right)^{1/2}\\
&\le C
\left\{
\left(\average_{B(0, 1)} |\nabla u_\varep|^2\right)^{1/2}
+\left(\average_{B(0,1)} |F|^q\right)^{1/q}
+\| g\|_{C^\rho(B(0,1))} \right\},
\endaligned
\end{equation*}
where we have used (\ref{5.2-2}), (\ref{5.2-3}), Theorem \ref{theorem-5.1}, and (\ref{5.1-2})
for the last inequality.
This completes the proof.
\end{proof}



\section{Boundary H\"older estimates and proof of Theorem \ref{main-theorem-2}}
\setcounter{equation}{0}

In this section we establish uniform boundary H\"older estimates for the Stokes system (\ref{Stokes})
in $C^1$ domains and give the proof of Theorem \ref{main-theorem-2}.

Let $\psi:\mathbb{R}^{d-1}\to \mathbb{R}$ be a $C^1$ function and
\begin{equation}\label{6-0}
\aligned
D_r &= D(r, \psi)
=\big\{ x=(x^\prime, x_d)\in \mathbb{R}^d: \, |x^\prime|<r \text{ and } \psi(x^\prime)<x_d<\psi(x^\prime) +10(M+1)r ) \big\},\\
\Delta_r & = \Delta (r, \psi)
=\big\{ x=(x^\prime, x_d)\in \mathbb{R}^d: \, |x^\prime|<r \text{ and } x_d=\psi(x^\prime) \big\}.
\endaligned
\end{equation}
We will always assume that $\psi (0)=0$ and
\begin{equation}\label{module}
\|\nabla \psi\|_\infty\le M \text{ and }
|\nabla \psi (x^\prime)-\nabla \psi (y^\prime)|\le \omega \big( |x^\prime-y^\prime|\big) \quad \text{ for any } x^\prime, y^\prime 
\in \mathbb{R}^{d-1},
\end{equation}
where $M>0$ is a fixed constant and
$\omega (r)$ is a fixed, nondecreasing continuous function on $[0, \infty)$ and $\omega (0)=0$.

\begin{theorem}\label{theorem-6.1}
Let $0<\rho, \eta<1$. 
Let $(u_\varepsilon, p_\varep)\in H^1(D_r;\mathbb{R}^d)\times L^2(D_r)$ be a weak solution of
\begin{equation}\label{6.1-0}
\left\{
\aligned
 \mathcal{L}_\varepsilon(u_\varepsilon) +\nabla p_\varep& =0&\quad &\text{ in } D_r,\\
 \text{ \rm div}(u_\varepsilon)  & = g &\quad &  \text{ in }D_r,\\
 u_\varepsilon  &= h &\quad  &\text{ on }\Delta_r
\endaligned
\right.
\end{equation}
for some $0<\varep<r<r_0$, where $g\in C^\eta(D_r)$, $h\in C^{0,1}(\Delta_r)$ and $h(0)=0$.
 Then for any $0<\varep\le t < r$,
\begin{equation}\label{6.1-1}
\aligned
&\left(\average_{D_t} |u_\varepsilon |^2\right)^{1/2}\\
& \le C\left(\frac{t}{r}\right)^\rho \left\{\left(\average_{D_r}|u_\varepsilon|^2\right)^{1/2}
+r \|g\|_{L^\infty(D_r)} +r^{1+\eta} [ g]_{C^{0, \eta}(D_r)}
+ r[h]_{C^{0,1}(\Delta_r)} \right\},
\endaligned
\end{equation}
where $C$ depends only on $d$, $\mu$, $\rho$, $\eta$, $r_0$, and $(M, \omega)$ in (\ref{module}).
\end{theorem}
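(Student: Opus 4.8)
The plan is to prove Theorem~\ref{theorem-6.1} by a compactness argument analogous to the one in Lemma~\ref{step-1}, but at the boundary and \emph{without} correctors (as already announced in the discussion after Theorem~\ref{main-theorem-2}). The estimate \eqref{6.1-1} is scaling-invariant, so by the usual rescaling and by iterating on dyadic-type scales it suffices to establish a single-step improvement: there exist $\theta\in(0,1/4)$ and $\varep_0\in(0,1/2)$, depending only on $d,\mu,\rho,\eta,r_0,(M,\omega)$, such that if $(u_\varep,p_\varep)$ solves \eqref{6.1-0} in $D_1$ with $\bigl(\average_{D_1}|u_\varep|^2\bigr)^{1/2}\le 1$, $\|g\|_{L^\infty(D_1)}+[g]_{C^{0,\eta}(D_1)}\le 1$, $[h]_{C^{0,1}(\Delta_1)}\le 1$, and $0<\varep<\varep_0$, then
\[
\Bigl(\average_{D_\theta}|u_\varep|^2\Bigr)^{1/2}\le \theta^{\rho}.
\]
Granting this, one rescales $v(x)=u_\varep(\theta^\ell x)$ at each stage (using the rescaling relations \eqref{rescaling}--\eqref{rescaling-1} together with the observation that $g$ and $h$ pick up favorable powers $\theta^{\ell(1+\eta)}$ and $\theta^\ell$ respectively, and that the domain $D_{\theta^\ell}$ rescales to a Lipschitz graph domain with the \emph{same} bound $M$ and an improved modulus $\omega(\theta^\ell\,\cdot\,)$), obtains by induction $\bigl(\average_{D_{\theta^\ell}}|u_\varep|^2\bigr)^{1/2}\le C\theta^{\ell\rho}$ whenever $\varep<\theta^{\ell-1}\varep_0$, and then fills the remaining range $\varep\le t<r$ by a routine covering/interpolation of dyadic scales. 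One should be slightly careful that $u_\varep$ need not vanish at the center of $D_{\theta^\ell}$, but since $h(0)=0$ and $[h]_{C^{0,1}}$ is controlled, the boundary data on the rescaled domain stays controlled; the iteration is on the $L^2$ average of $u_\varep$ itself, exactly as in the $C^{1,\sigma}$-at-$\varep$ scheme but with $u_0$ now vanishing on the flat-ish part of the boundary.

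For the single-step estimate I argue by contradiction. Suppose it fails; then there are $\varep_k\to 0$, matrices $A^k$ satisfying \eqref{ellipticity}--\eqref{periodicity}, graph functions $\psi_k$ obeying \eqref{module} with the same $(M,\omega)$, and solutions $(u_k,p_k)$ in $D(1,\psi_k)$ with the normalizations above but $\bigl(\average_{D(\theta,\psi_k)}|u_k|^2\bigr)^{1/2}>\theta^\rho$. By Caccioppoli's inequality \eqref{Ca} (applied on interior balls and, near the boundary, its boundary analogue for the Stokes system with zero—or controlled—Dirichlet data, which holds by the same Giaquinta-type argument) the sequence $\{u_k\}$ is bounded in $H^1$ on compact subsets of $\overline{D(1/2,\psi_k)}$, and $\{p_k\}$, normalized to mean zero, is bounded in $L^2$ via $\|p_k-\average p_k\|_{L^2}\le C\|\nabla p_k\|_{H^{-1}}\le C\|\nabla u_k\|_{L^2}$. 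Using the $C^1$-modulus bound, the domains $D(1,\psi_k)$ converge (after passing to a subsequence) to a limit graph domain $D(1,\psi_0)$; I pass to the limit $\psi_k\to\psi_0$ in $C^0$ (with $\nabla\psi_k\to\nabla\psi_0$ uniformly by Arzel\`a--Ascoli thanks to the equicontinuity modulus $\omega$), $g_k\rightharpoonup g_0$, $h_k\to h_0$ with $h_0(0)=0$, $\widehat{A^k}\to A^0$ elliptic, $u_k\rightharpoonup u_0$ weakly in $H^1_{\mathrm{loc}}$ and strongly in $L^2$, and $p_k\rightharpoonup p_0$ weakly in $L^2$. Theorem~\ref{compactness-theorem}, together with the strong $L^2$ convergence of the traces (which survives because the boundaries converge in $C^1$), shows $(u_0,p_0)$ solves the constant-coefficient Stokes system $-\mathrm{div}(A^0\nabla u_0)+\nabla p_0=0$, $\mathrm{div}(u_0)=g_0$ in $D(1,\psi_0)$ with $u_0=h_0$ on $\Delta(1,\psi_0)$ and $\bigl(\average_{D(\theta,\psi_0)}|u_0|^2\bigr)^{1/2}\ge\theta^\rho$. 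On the other hand, boundary $C^{1,\alpha'}$ (or even just $C^{0,1}$) regularity up to a $C^1$ boundary for the constant-coefficient Stokes system, combined with $u_0(0)=h_0(0)=0$, the bound $[h_0]_{C^{0,1}}\le 1$, and $\|g_0\|_{C^{0,\eta}}\le 1$, gives
\[
\Bigl(\average_{D(\theta,\psi_0)}|u_0|^2\Bigr)^{1/2}\le C_0\,\theta^{1}\le C_0\,\theta,
\]
and choosing $\theta$ in advance so that $C_0\theta<\theta^\rho$ (possible since $\rho<1$) yields the contradiction. Here $C_0$ depends only on $d,\mu,\eta$ and the $C^1$ character of $\partial D(1,\psi_0)$, which is uniform over the class by \eqref{module}; a brief argument is needed to see that $C_0$ can indeed be taken uniform over the compact family of limit domains.

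The main obstacle, as in all such compactness schemes, is the passage to the limit at the \emph{boundary}: one must ensure that the Dirichlet traces converge (strongly in $L^2(\partial)$, so that $u_0=h_0$ truly holds) despite the moving graphs $\psi_k$, and that the boundary Caccioppoli inequality for the Stokes system is available with constants independent of $k$ (this is where the uniform $C^1$-modulus $\omega$, rather than mere Lipschitz bounds, is used—both to flatten the boundary with a uniformly bounded change of variables and to get the limiting domain genuinely $C^1$ so that the constant-coefficient boundary regularity constant $C_0$ is uniform). The pressure plays only an auxiliary role here—it is eliminated via the $H^{-1}$ bound and is never differentiated—so, unlike in Theorem~\ref{theorem-5.1}, no delicate analysis of pressure error terms is required; correspondingly the estimate \eqref{6.1-1} is stated only for $u_\varep$.
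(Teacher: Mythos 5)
Your proposal follows essentially the same route as the paper: a one-step decay estimate at the boundary proved by compactness (with no correctors), a boundary analogue of the interior compactness theorem to pass to the limit through the varying graphs $\psi_k$, an induction on scales $\theta^k$ using the favorable rescaling of $g$ and $h$, and a final covering/rescaling step. The paper packages the compactness-through-moving-boundaries step as a separate lemma (Lemma \ref{boundary-compactness-theorem}, proved by straightening via $(x',x_d)\mapsto(x',x_d-\psi_k(x'))$), and iterates exactly as you describe.

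There is, however, one step in your contradiction argument that is wrong as written. You invoke ``boundary $C^{1,\alpha'}$ (or even just $C^{0,1}$) regularity up to a $C^1$ boundary'' for the limiting constant-coefficient Stokes system and conclude $\bigl(\average_{D(\theta,\psi_0)}|u_0|^2\bigr)^{1/2}\le C_0\theta$. Neither Lipschitz nor $C^{1,\alpha}$ regularity up to the boundary holds in a domain that is merely $C^1$ (already for the Laplacian with zero Dirichlet data); what is true is $C^{0,\sigma}$ regularity up to the boundary for every $\sigma<1$, with a constant depending on $\sigma$ and the modulus $\omega$. So the bound $C_0\theta$ is not available. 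The repair is immediate and is exactly what the paper does: fix $\sigma=(1+\rho)/2\in(\rho,1)$, use the boundary $C^{0,\sigma}$ estimate to get $\bigl(\average_{D(\theta,\psi_0)}|u_0|^2\bigr)^{1/2}\le C_0\theta^{\sigma}$ (using $u_0(0)=h_0(0)=0$ and the normalized data), and choose $\theta$ so that $2C_0\theta^{\sigma}<\theta^{\rho}$, which is possible precisely because $\sigma>\rho$. With that substitution your argument is complete; the remaining points you flag (uniform boundary Caccioppoli constants, strong $L^2$ convergence of traces, uniformity of $C_0$ over the class of graphs satisfying (\ref{module})) are handled correctly in your sketch.
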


It is not hard to see that Theorem \ref{main-theorem-2}
follows from Theorem \ref{theorem-6.1} and
the following boundary Cacciopoli's inequality whose proof may be found in \cite{Gia-1982}.

\begin{theorem}
Suppose that $A$ satisfies the ellipticity condition (\ref{ellipticity}).
Let $(u, p) \in H^1(D_r;\mathbb{R}^d)\times L^2(D_r)$
 be a weak solution of
$$
\left\{
\aligned
 -\text{\rm div} \big(A(x)\nabla u\big) +\nabla p & =F +\text{\rm div}(f) &\quad &  \text{ in } D_r,\\
 \text{\rm div} (u) & = g &\quad & \text{ in } D_r,\\
 u & =h &\quad & \text{ on } \Delta_r.
\endaligned
\right.
$$
Then
\begin{equation}\label{B-Cacciopoli}
\int_{D_{r/2}} |\nabla u|^2  \le C\left\{\frac{1}{r^2} \int_{D_r} |u|^2 
 +\int_{D_r} |f|^2 +\int_{D_r} |g|^2 + r^2 \int_{D_r} |F|^2  +\|h\|^2_{H^{1/2}(\Delta_r)} \right\},
\end{equation}
where $\mathit{C}$ depends only on $d$, $\mu$, and $M$.
\end{theorem}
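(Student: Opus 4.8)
\emph{Proof proposal.} The proof is the classical localized energy (Caccioppoli) estimate for the stationary Stokes system adapted to the portion $\Delta_r$ of the boundary (compare \cite{Gia-1982}); the two points that require care are the reduction of the Dirichlet datum to zero and the elimination of the pressure. By the dilation $x\mapsto rx$, which transforms the data as in \eqref{rescaling}--\eqref{rescaling-1} and leaves every term of \eqref{B-Cacciopoli} invariant, we may assume $r=1$. Using a bounded extension operator $H^{1/2}(\Delta_1;\mathbb{R}^d)\to H^1(D_1;\mathbb{R}^d)$ whose norm depends only on $d$ and $M$, pick $\widetilde h$ with $\widetilde h=h$ on $\Delta_1$ and $\|\widetilde h\|_{H^1(D_1)}\le C\|h\|_{H^{1/2}(\Delta_1)}$. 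Replacing $u$ by $u-\widetilde h$ makes the boundary value zero on $\Delta_1$, changes $f$ into $f+A\nabla\widetilde h$ and $g$ into $g-\operatorname{div}\widetilde h$, and increases each of $\int_{D_1}|u|^2,\int_{D_1}|f|^2,\int_{D_1}|g|^2$ by at most $C\|h\|_{H^{1/2}(\Delta_1)}^2$; hence it suffices to prove \eqref{B-Cacciopoli} with $r=1$, $h=0$, and without the $\|h\|_{H^{1/2}}$ term.

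Now fix radii $1/2\le s<t\le1$ and a cutoff $\varphi\in C_0^\infty(\mathbb{R}^d)$ with $0\le\varphi\le1$, $\varphi\equiv1$ on $D_s$, $|\nabla\varphi|\le C(M)(t-s)^{-1}$, and $\varphi$ supported away from the top and lateral faces of $\partial D_t$ (but not necessarily from $\Delta_t$). Since $u$ has zero trace on $\Delta_t$ and $\varphi$ vanishes on the remaining part of $\partial D_t$, the field $\varphi^2u$ lies in $H_0^1(D_t;\mathbb{R}^d)$, so $\int_{D_t}\operatorname{div}(\varphi^2u)=0$. Because the domains $D_t$, $1/2\le t\le1$, are uniformly bi-Lipschitz (with constant depending only on $d$ and $M$, by \eqref{module}) to a fixed cylinder, the Bogovskii operator yields $w\in H_0^1(D_t;\mathbb{R}^d)$ with $\operatorname{div}w=\operatorname{div}(\varphi^2u)=\varphi^2g+2\varphi\,\nabla\varphi\cdot u$ and $\|\nabla w\|_{L^2(D_t)}+\|w\|_{L^2(D_t)}\le C\big(\|g\|_{L^2(D_t)}+(t-s)^{-1}\|u\|_{L^2(D_t)}\big)$. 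The decisive point is that $\phi:=\varphi^2u-w\in H_0^1(D_t;\mathbb{R}^d)$ is divergence free, so testing the weak formulation against $\phi$ annihilates the pressure term and leaves $\int_{D_t}A\nabla u:\nabla\phi=\int_{D_t}F\cdot\phi-\int_{D_t}f:\nabla\phi$.

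Writing $\nabla\phi=\varphi^2\nabla u+2\varphi\,\nabla\varphi\otimes u-\nabla w$ and using the lower bound in \eqref{ellipticity} on $\int_{D_t}\varphi^2A\nabla u:\nabla u$ (which is $\ge\mu\int_{D_s}|\nabla u|^2$ since $\varphi\equiv1$ on $D_s$) one gets
$$\mu\int_{D_s}|\nabla u|^2\le C\int_{D_t}|\varphi||\nabla u||\nabla\varphi||u|+\Big|\int_{D_t}A\nabla u:\nabla w\Big|+\Big|\int_{D_t}F\cdot\phi\Big|+\Big|\int_{D_t}f:\nabla\phi\Big|.$$
Each term on the right is handled by Cauchy--Schwarz and Young's inequality: in every one of them a factor $\nabla u$ over $D_t$ is paired with $\nabla\varphi\cdot u$, with $\nabla w$, or with $f$, and the Bogovskii bounds above also give $\|\nabla\phi\|_{L^2(D_t)}\le C(\|\nabla u\|_{L^2(D_t)}+(t-s)^{-1}\|u\|_{L^2(D_t)}+\|g\|_{L^2(D_t)})$. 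Collecting terms, choosing the Young parameter so that the total coefficient of $\int_{D_t}|\nabla u|^2$ is at most $\mu/2$, and dividing by $\mu$ yields, for all $1/2\le s<t\le1$,
$$\int_{D_s}|\nabla u|^2\le\tfrac12\int_{D_t}|\nabla u|^2+C\Big\{(t-s)^{-2}\int_{D_1}|u|^2+\int_{D_1}|f|^2+\int_{D_1}|g|^2+\int_{D_1}|F|^2\Big\}.$$
The standard hole-filling lemma---if $\Phi(s)\le\theta\Phi(t)+A(t-s)^{-2}+B$ on $[1/2,1]$ with $\theta<1$, then $\Phi(1/2)\le c(\theta)(A+B)$---applied to $\Phi(s)=\int_{D_s}|\nabla u|^2$ (which is finite since $u\in H^1(D_1)$) gives \eqref{B-Cacciopoli} in the case $r=1$, $h=0$; undoing the two reductions completes the proof.

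The step I expect to be the real obstacle is the treatment of the pressure: with the naive test function $\varphi^2u$ one is left with $\int_{D_t}p\,\operatorname{div}(\varphi^2u)$, which cannot be absorbed because no bound on $\average_{D_t}p$ is available, and this is precisely why the test function must be corrected by the Bogovskii field $w$ to make it solenoidal. A related subtlety is that $\operatorname{div}w$ contains $\varphi^2g$, which does not vanish on $D_s$, so $w$ is supported on all of $D_t$ and the term $\int A\nabla u:\nabla w$ brings in $\|\nabla u\|_{L^2(D_t)}$ rather than only $\|\nabla u\|_{L^2(D_t\setminus D_s)}$; this forces the two-radius scheme together with the iteration lemma instead of a single cutoff. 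Finally, one must keep the Bogovskii constant and the extension-operator norm uniform and scale-invariant over the family $\{D_t\}$, which is guaranteed by the uniform Lipschitz control \eqref{module} on $\psi$.
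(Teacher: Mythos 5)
Your proposal is correct and is essentially the standard argument that the paper delegates to \cite{Gia-1982}: reduce to zero Dirichlet data by extension, test with a cutoff of $u$ corrected by a Bogovskii field so that the pressure term vanishes, and close with the two-radius hole-filling iteration. You correctly identify the two genuine points of care (solenoidal correction of the test function and the resulting need for the iteration lemma), so nothing is missing.
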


To prove Theorem \ref{theorem-6.1}
we need an analogue of Theorem \ref{compactness-theorem}
in the presence of boundary.

\begin{lemma}\label{boundary-compactness-theorem}
Let $\{A^k(y)\}$ be a sequence of 1-periodic matrices satisfying the ellipticity condition(\ref{ellipticity}).
Let $D(k)= D(r, \psi_k)$ and $\Delta (k)=\Delta (r, \psi_k)$,
where $\{\psi_k\}$ is a sequence of $C^1$ functions satisfying $\psi_k(0)=0$ and (\ref{module}).
 Let $(u_k, p_k) \in H^1(D(k);\mathbb{R}^d))\times L^2(D(k))$ be a weak solution of
$$
\left \{
\aligned
 -\text{\rm div} \big(A^k(x/\varep_k)\nabla u_k \big) +\nabla p_k &=0 &\quad&\text{ in } D(k), \\
 \text{\rm div} (u_k)  &= g_k& \quad & \text{ in } D(k),\\
 u_k &=h_k &\quad & \text{ on }\Delta(k),
\endaligned
\right .
$$
where $\varepsilon_k \rightarrow 0$, $f_k(0)=0$, and
\begin{equation}\label{temp3.3}
\|u_k\|_{H^1(D(k))}+ \| p_k\|_{L^2(D(k))} +\|g_k\|_{C^{\eta}(D(k))}+ \|h_k\|_{C^{0,1}(\Delta(k))} \le C.
\end{equation}
Then there exist subsequences of $\{ A^k\}$, $\{ u_k\}$, $\{ p_k\}$,
$\{\psi_k\}$, $\{ g_k\}$ and $\{h_k\}$, which we will still denote by the same notation, 
and a constant matrix $A^0$ satisfying (\ref{ellipticity-1}),
a function $\psi_0$ satisfying $\psi_0(0)=0$ and
(\ref{module}), $u_0\in H^1(D(r,\psi_0);\mathbb{R}^d)$, $p_0\in L^2(D(r,\psi_0))$, $g_0\in C^{\eta}(D(r, \psi_0))$,
$h_0\in C^{0,r}(\Delta(r,\psi_0);\mathbb{R}^d)$ such that
\begin{equation}\label{temp3.4}
\left \{
\aligned
&\widehat{A^k} \to A^0,\\
& \psi_k (x^\prime) \rightarrow \psi_0 (x^\prime)
\text{ and } \nabla \psi_k (x^\prime) \to \nabla \psi_0 (x^\prime) \text{ uniformly for } |x'|<r,\\
& h_k(x',\psi_k(x')) \rightarrow h_0(x',\psi_0(x')) \text{ uniformly for }|x'|<r,\\
& g_k(x',\psi_k(x')) \rightarrow g_0(x',\psi_0(x')) \text{ uniformly for }|x'|<r,\\
& u_k(x',x_d-\psi_k(x'))  \rightharpoonup u_0(x',x_d-\psi_0(x')) \text{ weakly in } H^1(Q;\mathbb{R}^d),\\
& p_k(x',x_d-\psi_k(x'))  \rightharpoonup p_0(x',x_d-\psi_0(x')) \text{ weakly in }L^2(Q),
\endaligned
\right.
\end{equation}
where $Q=\big\{ (x^\prime, x_d): \, |x^\prime|<r \text{ and } 0<x_d< 10(M+1)r\big\}$.
Moreover, $(u_0, p_0)$ is a weak solution of 
\begin{equation}\label{temp3.5}
\left \{
\aligned
 -\text{\rm div} \big(A^0\nabla u_0\big) +\nabla p_0&= 0&\quad& \text{ in } D(r,\psi_0), \\
 \text{\rm div} (u_0) &= g_0&\quad & \text{ in } D(r,\psi_0),\\
 u_0&= h_0 &\quad & \text{ on } \Delta(r,\psi_0).
\endaligned
\right.
\end{equation}
\end{lemma}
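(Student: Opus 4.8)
The plan is to flatten every domain $D(k)=D(r,\psi_k)$ onto the fixed cylinder $Q$, to extract convergent subsequences in that common domain, and then to recover the limiting equation~(\ref{temp3.5}) by reducing its \emph{interior} part to Theorem~\ref{compactness-theorem} and its \emph{boundary} part to weak continuity of the trace operator. For each $k$ set $\Phi_k(x',x_d)=(x',x_d+\psi_k(x'))$, a bi-Lipschitz map of $Q$ onto $D(k)$ with $\det D\Phi_k\equiv 1$ and $\|D\Phi_k\|_\infty+\|D\Phi_k^{-1}\|_\infty\le C(M)$, and put $\widetilde u_k=u_k\circ\Phi_k$, $\widetilde p_k=p_k\circ\Phi_k$, $\widetilde g_k=g_k\circ\Phi_k$, $\widetilde h_k(x')=h_k(x',\psi_k(x'))$. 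Composition with $\Phi_k$ is bounded on $H^1$, on $L^2$, on $C^\eta$ and on $C^{0,1}$ with constants depending only on $M$, so~(\ref{temp3.3}) gives
\[
\|\widetilde u_k\|_{H^1(Q)}+\|\widetilde p_k\|_{L^2(Q)}+\|\widetilde g_k\|_{C^\eta(\overline Q)}+\|\widetilde h_k\|_{C^{0,1}(\{|x'|<r\})}\le C .
\]

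Next I would extract compactness. Since the $\psi_k$ share the Lipschitz bound $M$ and the modulus $\omega$, Arzel\`a--Ascoli applied to $\{\psi_k\}$ and to $\{\nabla\psi_k\}$ gives a subsequence with $\psi_k\to\psi_0$ and $\nabla\psi_k\to\nabla\psi_0$ uniformly on $\{|x'|\le r\}$, with $\psi_0$ satisfying~(\ref{module}) and $\psi_0(0)=0$. Passing to further subsequences: $\widetilde u_k\rightharpoonup\widetilde u_0$ weakly in $H^1(Q)$ and strongly in $L^2(Q)$ (Rellich); $\widetilde p_k\rightharpoonup\widetilde p_0$ weakly in $L^2(Q)$; $\widetilde g_k\to\widetilde g_0$ and $\widetilde h_k\to\widetilde h_0$ uniformly, with $\widetilde g_0\in C^\eta(\overline Q)$ and $\widetilde h_0$ Lipschitz; and, since~(\ref{ellipticity-1}) confines the $\widehat{A^k}$ to a fixed compact set, $\widehat{A^k}\to A^0$ for a constant matrix $A^0$ obeying~(\ref{ellipticity-1}). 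Defining $u_0=\widetilde u_0\circ\Phi_0^{-1}$, $p_0=\widetilde p_0\circ\Phi_0^{-1}$, $g_0=\widetilde g_0\circ\Phi_0^{-1}$ and $h_0(x',\psi_0(x'))=\widetilde h_0(x')$, and using $\Phi_k^{-1}\to\Phi_0^{-1}$ in $C^1$ on compact subsets, one reads off all the convergences in~(\ref{temp3.4}).

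The heart of the argument is the identification of the limit. The relations $-\mathrm{div}(A^0\nabla u_0)+\nabla p_0=0$ and $\mathrm{div}(u_0)=g_0$ in $D(r,\psi_0)$ are local, so it suffices to check them on every ball $B$ with $\overline B\subset D(r,\psi_0)$. Since $\psi_k\to\psi_0$ uniformly, $\overline B\subset D(k)$ for all large $k$; on $B$ the pair $(u_k,p_k)$ is then a weak solution of the Stokes system with coefficient matrix $A^k(x/\varep_k)$, right-hand side $0$, and $\mathrm{div}(u_k)=g_k$. Writing $u_k=\widetilde u_k\circ\Phi_k^{-1}$ and splitting $u_k-u_0=(\widetilde u_k-\widetilde u_0)\circ\Phi_k^{-1}+(\widetilde u_0\circ\Phi_k^{-1}-\widetilde u_0\circ\Phi_0^{-1})$, the $C^1$-convergence of $\Phi_k^{-1}$ together with the strong $L^2(Q)$-convergence of $\widetilde u_k$ gives $u_k\to u_0$ in $L^2(B)$, hence $u_k\rightharpoonup u_0$ in $H^1(B)$; likewise $p_k\rightharpoonup p_0$ in $L^2(B)$ and $g_k\to g_0$ in $L^2(B)$. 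Theorem~\ref{compactness-theorem} with $\Omega=B$ and $F_k\equiv0$ then shows that $(u_0,p_0)$ is a weak solution of $-\mathrm{div}(A^0\nabla u_0)+\nabla p_0=0$, $\mathrm{div}(u_0)=g_0$ in $B$, hence in $D(r,\psi_0)$. For the Dirichlet condition, let $T=\{(x',0):|x'|<r\}$, which $\Phi_k$ carries onto $\Delta(k)$; since the trace operator $H^1(Q)\to L^2(T)$ is compact, $\widetilde u_k|_T\to\widetilde u_0|_T$ strongly in $L^2(T)$, while $\widetilde u_k|_T=\widetilde h_k\to\widetilde h_0$ uniformly, so $\widetilde u_0|_T=\widetilde h_0$, i.e.\ $u_0=h_0$ on $\Delta(r,\psi_0)$. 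This proves~(\ref{temp3.5}).

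The main obstacle is bookkeeping rather than conceptual: one must carefully transport the several modes of convergence (weak $H^1$, strong $L^2$, weak $L^2$, uniform, and convergence of boundary traces) through the $k$-dependent changes of variables $\Phi_k$, which converge only in $C^1$. I deliberately pass \emph{back} to $D(k)$ before invoking Theorem~\ref{compactness-theorem}, because in the flattened picture the coefficient matrix becomes $(D\Phi_k)^{-1}A^k(\Phi_k(\cdot)/\varep_k)(D\Phi_k)^{-T}$, which is no longer a rescaled periodic matrix of the form $A(\cdot/\varep_k)$. As a self-contained alternative, one could homogenize directly on $Q$ by Tartar's oscillating test-function method, exactly as in the proof of Theorem~\ref{compactness-theorem} but with test functions $\bigl(P_j^\beta+\varep_k\chi_j^{k*\beta}(\Phi_k(x)/\varep_k)\bigr)\psi(x)$, $\psi\in C_0^\infty(Q)$; this requires controlling the extra commutator terms generated by $\nabla\psi_k$, which the localization above avoids.
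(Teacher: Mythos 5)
Your proof is correct and follows essentially the same route as the paper's: extract the convergences in (\ref{temp3.4}) by flattening onto $Q$ and passing to subsequences, identify the interior equation by localizing to subdomains compactly contained in $D(r,\psi_0)$ (hence contained in $D(r,\psi_k)$ for large $k$) and invoking Theorem \ref{compactness-theorem} there, and recover the Dirichlet condition from trace convergence of the flattened functions together with the uniform convergence of $h_k$. Your write-up is in fact more careful than the paper's about transporting the various modes of convergence through the $k$-dependent maps $\Phi_k$, a point the paper leaves implicit.
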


\begin{proof}
We first note that (\ref{temp3.4}) follows from (\ref{module}) and (\ref{temp3.3}) by passing to subsequences. 
To prove (\ref{temp3.5}), let $\Omega\subset\overline{\Omega}\subset D(r, \psi_0)$.
Observe that if $k$ is sufficiently large, $\Omega\subset D(r, \psi_k)$.
 We now apply Theorem \ref{compactness-theorem} in $\Omega$ to conclude that
 $A^k(x/\varep_k)\nabla u_k \rightharpoonup A^0\nabla u_0$ weakly in $L^2(\Omega)$.
 As a consequence, $(u_0, p_0)$ is a weak solution of
 $-\text{\rm div} \big(A^0\nabla u_0\big) +\nabla p_0=0$ and
 $\text{\rm div} (u_0)=g_0$ in $\Omega$ for any domain $\Omega$ such that $\overline{\Omega}\subset D(r, \psi_0)$, and
 thus for $\Omega=D(r, \psi_0)$.
 Finally, let $v_k(x',x_d)=u_k(x',x_d+\psi_k(x'))$ and $ v_0(x',x_d)=u_0(x',x_d+\psi_0(x'))$. 
 That $u_0=h_0$ on $\Delta(r,\psi_0)$ in the sense of trace follows from the fact that 
 $v_k \rightharpoonup v_0$ weakly in $H^1(Q;\mathbb{R}^d)$, $v_k (x^\prime, 0)=h_k(x',\psi_k(x'))$ and
 $h_k(x^\prime, \psi_k (x^\prime)) \to h_0(x^\prime, \psi_0(x^\prime))$ uniformly on $\{ |x^\prime|<r\}$.
\end{proof}

With the help of Lemma \ref{boundary-compactness-theorem}
we prove Theorem \ref{theorem-6.1} by a compactness argument
in the same manner as in \cite{AL-1987}.

\begin{lemma}\label{step-1-b}
Let $0<\rho, \eta<1$. Then there exist constants $\varepsilon_0\in (0,1/2)$ 
and $\theta\in(0,1/4)$, depending only on $d$, $\mu$, $\rho$, $\eta$,
 and $(M, \omega)$ in (\ref{module}), such that
\begin{equation}\label{6.4-0}
\left(\average_{D_\theta} |u_\varep |^2\right)^{1/2} \le \theta^{\rho}
\end{equation}
for any $0<\varepsilon<\varepsilon_0$, whenever $(u_\varepsilon, p_\varep)
 \in H^1(D_1;\mathbb{R}^d)\times L^2(D_1)$ is a weak solution of
\begin{equation}\label{6.4-1}
\left\{
\aligned
 \mathcal{L}_\varepsilon(u_\varepsilon)  +\nabla p_\varepsilon &=0&\quad &\text{ in } D_1,\\
 \text{\rm div}(u_\varepsilon)  &=g &\quad & \text{ in } D_1,\\
 u_\varepsilon& =h &\quad & \text{ on } \Delta_1,
\endaligned
\right.
\end{equation}
and
\begin{equation}\label{6.4-2}
\left\{
\aligned
& h(0)=0, \ \ \|h\|_{C^{0,1}(\Delta_1)} \le 1,\\
& \average_{D_1} |u_\varepsilon|^2\le 1, \ \ \ \|g\|_{C^{\eta}(D_1)} \le 1.
\endaligned
\right.
\end{equation}
\end{lemma}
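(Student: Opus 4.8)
The plan is to prove Lemma \ref{step-1-b} by contradiction using the boundary compactness result, Lemma \ref{boundary-compactness-theorem}, exactly mirroring the interior argument in Lemma \ref{step-1}. First I would fix the target decay exponent by invoking the boundary $C^\rho$ regularity for the homogenized Stokes system with constant coefficients $A^0$ and vanishing Dirichlet data (after subtracting an affine extension of $h$): there is a constant $C_0$, depending only on $d$, $\mu$, $\rho$, $\eta$, and $(M,\omega)$, such that any weak solution $(u_0,p_0)$ of $-\text{\rm div}(A^0\nabla u_0)+\nabla p_0 = 0$, $\text{\rm div}(u_0)=g_0$ in $D_{1/2}$ with $u_0 = h_0$ on $\Delta_{1/2}$, $h_0(0)=0$, satisfies
\begin{equation*}
\left(\average_{D_t} |u_0|^2\right)^{1/2}
\le C_0\, t^{\rho_1}\left\{\left(\average_{D_{1/2}}|u_0|^2\right)^{1/2} + \|g_0\|_{C^\eta(D_{1/2})} + \|h_0\|_{C^{0,1}(\Delta_{1/2})}\right\}
\end{equation*}
for some fixed $\rho_1\in(\rho,1)$ and all $0<t<1/4$. (Here the flat boundary $\Delta_r$ in $D_r$ is only Lipschitz-flattened, but since $\psi\in C^1$ the boundary Hölder estimate for constant-coefficient Stokes systems holds; this is classical.) Then I would choose $\theta\in(0,1/4)$ so small that $2^d C_0\,\theta^{\rho_1} \le \theta^\rho$, fixing $\theta$ once and for all.

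Next, suppose for contradiction that no such $\varep_0$ works for this $\theta$. Then there are sequences $\varep_k\to 0$, matrices $A^k(y)$ satisfying \eqref{ellipticity} and \eqref{periodicity}, domains $D(k)=D(1,\psi_k)$ with $\psi_k$ satisfying $\psi_k(0)=0$ and \eqref{module}, and weak solutions $(u_k,p_k)$ of \eqref{6.4-1}--\eqref{6.4-2} on $D(k)$ with
\begin{equation*}
\left(\average_{D(1,\psi_k)\cap\{x_d<\psi_k(x')+10(M+1)\theta\}} |u_k|^2\right)^{1/2} > \theta^\rho.
\end{equation*}
The bounds \eqref{6.4-2}, together with the boundary Caccioppoli inequality \eqref{B-Cacciopoli} applied on $D_{1/2}$ (with $F=0$, $f=0$), give a uniform $H^1(D_{1/2})$ bound on $u_k$; and $p_k$, normalized to have mean zero on $D_{1/2}$, is bounded in $L^2(D_{1/2})$ via $\|p_k - \average p_k\|_{L^2}\le C\|\nabla p_k\|_{H^{-1}}\le C\|\nabla u_k\|_{L^2}$ using the first equation. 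So hypothesis \eqref{temp3.3} of Lemma \ref{boundary-compactness-theorem} holds (on the smaller domain $D_{1/2}$), and after passing to subsequences we obtain limits $A^0$, $\psi_0$, $g_0$, $h_0$, $u_0$, $p_0$ with the convergences \eqref{temp3.4}, and $(u_0,p_0)$ solves the homogenized problem \eqref{temp3.5} on $D(1/2,\psi_0)$ with $u_0 = h_0$ on $\Delta(1/2,\psi_0)$. The flattening change of variables $x_d\mapsto x_d-\psi_k(x')$ makes the domains fixed, so the $L^2$ convergence of $u_k$ (strong in $L^2$ by Rellich after the weak $H^1$ convergence) passes to the limit in the reverse inequality, yielding
\begin{equation*}
\left(\average_{D(\theta,\psi_0)} |u_0|^2\right)^{1/2} \ge \theta^\rho,
\end{equation*}
while the lower-semicontinuity of norms gives $\average_{D(1/2,\psi_0)}|u_0|^2\le 2^d$, $\|g_0\|_{C^\eta(D(1/2,\psi_0))}\le 1$, $\|h_0\|_{C^{0,1}(\Delta(1/2,\psi_0))}\le 1$, $h_0(0)=0$.

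Finally, applying the constant-coefficient boundary estimate above to $(u_0,p_0)$ at scale $t=\theta$ gives $(\average_{D(\theta,\psi_0)}|u_0|^2)^{1/2}\le C_0\,\theta^{\rho_1}\cdot\{2^{d/2}+1+1\}\le 2^d C_0\,\theta^{\rho_1}\le\theta^\rho$ — a contradiction if the absolute constant in the bracket is absorbed, which one arranges by taking $\theta$ slightly smaller (replace $2^d C_0\theta^{\rho_1}\le\theta^\rho$ by $C\cdot C_0\,\theta^{\rho_1}\le\theta^\rho$ with $C$ the combined constant). This yields the desired $\varep_0$ and completes the proof. The main obstacle I anticipate is technical bookkeeping rather than conceptual: one must be careful that the flattening diffeomorphisms $\Phi_k(x',x_d)=(x',x_d-\psi_k(x'))$ converge in $C^1$ (which follows from $\nabla\psi_k\to\nabla\psi_0$ uniformly, part of \eqref{temp3.4}) so that the pulled-back coefficient matrices and the weak formulations converge correctly, and that Lemma \ref{boundary-compactness-theorem} is genuinely applicable — its hypothesis is stated on $D(r,\psi_k)$ with $r$ fixed, so one should run the whole argument with the normalization domain taken to be, say, $D_{1/2}$ (shrinking from $D_1$ via Caccioppoli) and then note $D(\theta,\psi_0)\subset D(1/2,\psi_0)$ since $\theta<1/4$. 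The passage between the "slab" sets $D_t$ before and after flattening, and the corresponding measure ratios $|D_t|/|D_r|\sim (t/r)^d$, is routine but must be kept consistent.
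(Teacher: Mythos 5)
Your proposal is correct and follows essentially the same route as the paper's proof: a compactness/contradiction argument based on Lemma \ref{boundary-compactness-theorem}, the boundary H\"older estimate for the constant-coefficient (homogenized) Stokes system with an exponent $\rho_1>\rho$ (the paper takes $\sigma=(1+\rho)/2$), the choice of $\theta$ with $C_0\theta^{\rho_1}<\theta^{\rho}$, and passage to the limit using boundary Caccioppoli, the pressure bound via $\|\nabla p_k\|_{H^{-1}}$, and strong $L^2$ convergence after flattening. The only blemish is the set in your displayed contradiction hypothesis, which should be $D(\theta,\psi_k)$ (restricting $|x'|<\theta$ as well), but this does not affect the argument.
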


\begin{proof}
We will prove the lemma by contradiction. Let $\sigma=(1+\rho)/2>\rho$.
Using the boundary H\"older estimates for solutions of Stokes systems
 with constant coefficients, we obtain 
\begin{equation}\label{6.4-3}
\left( \average_{D_r}  |w|^2 \right)^{1/2}
\le C\, r^\sigma \|w\|_{C^{\sigma}(D_{1/4})}
 \le C_0 \, r^\sigma,
 \end{equation}
if $ 0<r<(1/4)$ and $(w, p_0)$ satisfies 
\begin{equation}\label{6.4-4}
\left\{
\aligned
& -\text{\rm div}\big(A^0 \nabla w\big) + \nabla p_0=0 \ \text{ in }D_{1/2},\\
& \text{div } w=g \  \text{   in }D_{1/2},\\
& w= h \  \text{ on }\Delta_{1/2},\\
& \|h\|_{C^{0,1}(\Delta_{1/2})} \le 1, \ \ h(0)=0,\\
& \int_{D_{1/2}} |w|^2 \le |D_1|, \text{ and } \ \|g\|_{C^{\eta}(D_{1/2})} \le 1,
\endaligned
\right.
\end{equation}
where $A^0$ is a constant matrix satisfying the ellipticity condition (\ref{ellipticity-1}).
The constant $C_0$ in (\ref{6.4-3}) depends only on $d$,
$\mu$, $\rho$, $\eta$, and $(M, \omega)$ in (\ref{module}).
We now choose $\theta\in (0,1/4)$ so small that
\begin{equation}\label{6.4-5}
2 C_0\theta^{\sigma} < \theta^{\rho}.
\end{equation}
We claim that the lemma holds for this $\theta$ and some $\varepsilon_0>0$, 
which depends only on $d$, $\mu$, $\rho$, $\eta$,  and $(M,\omega)$.

Suppose this is not the case. Then there exist sequences $\{\varepsilon_k\}$,
$\{A^k\}$, $\{ u_k\}$, $\{p_k\}$, $\{g_k\}$, $\{ h_k\}$, $\{ \psi_k\}$,
 such that  $\varepsilon_k \to 0$, $A^k$ satisfies (\ref{ellipticity}) and (\ref{periodicity}),
 $\psi_k$ satisfies (\ref{module}),
\begin{equation}\label{6.4-6}
\left\{
\aligned
& -\text{\rm div} \big(A^k (x/\varep_k)\nabla u_k\big) + \nabla p_k=0  \ \text{ in }D(k),\\
& \text{div} (u_k) = g_k \ \text{ in }D(k),\\
& u_k =h_k \ \text{ on }\Delta(k),\\
& \|h_k\|_{C^{0,1}(\Delta(k))} \le 1, \ \ h_k(0)=0,\\
&\left(\average_{D(k)} |u_k|^2\right)^{1/2}\le 1, \ \ \|g_k\|_{C^{\eta}(D(k))} \le 1,
\endaligned
\right.
\end{equation}
and
\begin{equation}\label{6.4-7}
\left(\average_{D(\theta, \psi_k )}|u_k|^2\right)^{1/2} > \theta^{\rho},
\end{equation}
where $D(k)=D(1, \psi_k)$ and $\Delta(k)=\Delta (1, \psi_k)$.
Note that by Cacciopoli's inequality (\ref{B-Cacciopoli}),
 the sequence $\{\| u_k\|_{H^1(D(1/2, \psi_k))} \}$ is  bounded. 
  In view of Lemma \ref{boundary-compactness-theorem},
  by passing to  subsequences, we may assume that
\begin{equation}\label{6.4-8}
\left\{
\aligned
&\widehat{A^k} \to A^0,\\
& \psi_k  \to \psi_0 \text{ and } \nabla \psi_k \to \nabla \psi_0  \text{ uniformly in } \{ |x'|<1\},\\
& u_k(x',x_d-\psi_k(x')) \rightarrow u_0(x',x_d-\psi_0(x')) \text{ weakly in } H^1(Q;\mathbb{R}^d),\\
& h_k(x^\prime, \psi_k(x^\prime)) \to h_0 (x^\prime, \psi_0(x^\prime))
\text{ uniformly in } \{ |x^\prime|<1\},\\
& g_k (x^\prime, x_d-\psi_k(x^\prime)) \to g_0 (x^\prime, x_d-\psi_0 (x^\prime))
\text{ uniformly in } Q,
\endaligned
\right.
\end{equation}
where $Q=\{ (x^\prime, x_d): \, |x^\prime|<1/2 \text{ and } 0<x_d< 5(M+1) \}$.
Moreover, we note that
$u_0 \in H^1(D(1/2,\psi_0);\mathbb{R}^d)$ and satisfies
\begin{equation*}
\left\{
\aligned
 -\text{\rm div}\big(A^0\nabla u_0\big) +\nabla p_0& =0&\quad & \text{ in }D(1/2,\psi_0),\\
 \text{\rm div} (u_0) & =g_0 &\quad &\text{ in }D(1/2,\psi_0),\\
 u_0 &= h_0 &\quad  & \text{ on }\Delta(1/2,\psi_0),\\
\endaligned
\right.
\end{equation*}
Observe that by (\ref{6.4-6}) and (\ref{6.4-8}),
$$
h_0(0)=0, \ \ \| h_0\|_{C^{0,1}(\Delta(1/2, \psi_0))}\le 1, \ \ 
\| g_0\|_{C^\eta(D(1/2, \psi_0))} \le 1,
$$
$$
\int_{D(1/2, \psi_0)} |u_0|^2
=\lim_{k\to \infty} \int_{D(1/2, \psi_k)} |u_k|^2
\le \lim_{k\to \infty} |D(1, \psi_k)|
=|D(1, \psi_0)|.
$$
It follows that $w=u_0$ satisfies (\ref{6.4-4}). However,
by (\ref{6.4-7}),
\begin{equation}\label{6.4-9}
\left(\average_{D(\theta,\psi_0)} |u_0|^2\right)^{1/2}
 = \lim_{k \rightarrow \infty} \left(\average_{D(\theta,\psi_k)} |u_k|^2 \right)^{1/2}
 \ge \theta^{\rho}.
\end{equation}
Thus, by (\ref{6.4-3}), we obtain 
 $\theta^{\rho} \le C_0 \theta^\sigma$,
which contradicts the choice of $\theta$.
This completes the proof.
\end{proof}

\begin{lemma}\label{step-2-b}
Fix $0<\rho, \eta<1$. Let $\varepsilon_0$ and $\theta$ be constants 
given by Lemma \ref{step-1-b}. 
Suppose that $(u_\varepsilon, p_\varep) \in H^1(D(1,\psi);\mathbb{R}^d)\times L^2(D(1, \psi))$ be a weak solution of
\begin{equation*}
\left\{
\aligned
 \mathcal{L}_\varepsilon (u_\varepsilon) +\nabla p_\varepsilon  &=0&\quad& \text{ in } D(1,\psi),\\
\text{\rm div }u_\varepsilon & = g&\quad&  \text{ in } D(1,\psi),\\
 u_\varepsilon&=h& \quad &  \text{ on }\Delta(1,\psi),\\
\endaligned
\right.
\end{equation*}
where $g\in C^\eta(D(1, \psi))$, $h \in C^{0,1}(\Delta(1,\psi),\mathbb{R}^d)$ and $h(0)=0$.
Then, if $0<\varepsilon<\varepsilon_0\theta^{k-1}$ for some $k \ge 1$,
\begin{equation}\label{6.5-1}
\left(\average_{D(\theta^k,\psi)}|u_\varepsilon|^2\right)^{1/2}
 \le \theta^{k\rho}\max\left\{\left(\average_{D(1,\psi)}|u_\varepsilon|^2\right)^{1/2},
 \|g\|_{C^{\eta}(D(1,\psi))}, \|h\|_{C^{0,1}(\Delta(1,\psi))}\right\}.
\end{equation}
\end{lemma}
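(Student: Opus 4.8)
The plan is to prove (\ref{6.5-1}) by induction on $k$, reducing each step to the single-scale decay estimate of Lemma \ref{step-1-b}. The preliminary observation is that Lemma \ref{step-1-b} has a scaling-invariant reformulation: setting $N=\max\{(\average_{D_1}|u_\varepsilon|^2)^{1/2},\ \|g\|_{C^\eta(D_1)},\ \|h\|_{C^{0,1}(\Delta_1)}\}$, dividing $(u_\varepsilon,g,h)$ by $N$, and handling the trivial case $N=0$ separately (where $u_\varepsilon\equiv0$), the lemma says precisely that $(\average_{D_\theta}|u_\varepsilon|^2)^{1/2}\le\theta^{\rho}N$ for any weak solution of (\ref{6.4-1}) with $h(0)=0$ and any $0<\varepsilon<\varepsilon_0$. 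Writing $M_0$ for the maximum appearing on the right-hand side of (\ref{6.5-1}), the case $k=1$ is exactly this.

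For the inductive step I would assume (\ref{6.5-1}) at level $k$ and take $0<\varepsilon<\varepsilon_0\theta^{k}$. Rescaling to the unit scale, I set $\widetilde\psi(x')=\theta^{-k}\psi(\theta^k x')$, $w(x)=u_\varepsilon(\theta^k x)$, $\pi(x)=\theta^k p_\varepsilon(\theta^k x)$, $\widetilde g(x)=\theta^k g(\theta^k x)$ and $\widetilde h(x)=h(\theta^k x)$. Because $\theta^k<1$ and $\omega$ is nondecreasing, $\widetilde\psi$ satisfies (\ref{module}) with the \emph{same} pair $(M,\omega)$, and $\widetilde\psi(0)=0$; a change of variables gives $D(r,\widetilde\psi)=\theta^{-k}D(\theta^k r,\psi)$ together with the analogous identity for $\Delta$. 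By the rescaling rule (\ref{rescaling})--(\ref{rescaling-1}) (with $F=0$), $(w,\pi)$ is a weak solution of $\mathcal{L}_{\varepsilon/\theta^k}(w)+\nabla\pi=0$, $\text{\rm div}(w)=\widetilde g$ in $D(1,\widetilde\psi)$, with $w=\widetilde h$ on $\Delta(1,\widetilde\psi)$ and $\widetilde h(0)=0$. Since $\varepsilon/\theta^k<\varepsilon_0$, the scaling-invariant form of Lemma \ref{step-1-b} applies to $(w,\pi)$ on $D(1,\widetilde\psi)$.

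It then remains to estimate the three quantities defining the $\max$ on the right-hand side of that inequality. The induction hypothesis, usable because $\varepsilon<\varepsilon_0\theta^{k}\le\varepsilon_0\theta^{k-1}$, gives $(\average_{D(1,\widetilde\psi)}|w|^2)^{1/2}=(\average_{D(\theta^k,\psi)}|u_\varepsilon|^2)^{1/2}\le\theta^{k\rho}M_0$. The amplitude factor $\theta^k$ built into $\widetilde g$ yields $\|\widetilde g\|_{C^\eta(D(1,\widetilde\psi))}\le\theta^k\|g\|_{C^\eta(D(1,\psi))}\le\theta^k M_0$. For the boundary datum the Lipschitz seminorm scales cleanly, $[\widetilde h]_{C^{0,1}(\Delta(1,\widetilde\psi))}=\theta^k[h]_{C^{0,1}(\Delta(\theta^k,\psi))}\le\theta^k\|h\|_{C^{0,1}(\Delta(1,\psi))}\le\theta^k M_0$, and $\widetilde h(0)=0$ keeps the sup part comparable to this, so $\|\widetilde h\|_{C^{0,1}(\Delta(1,\widetilde\psi))}$ is $\le\theta^k M_0$ as well. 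Since $\theta<1$ and $\rho<1$ give $\theta^k\le\theta^{k\rho}$, all three terms are $\le\theta^{k\rho}M_0$, and Lemma \ref{step-1-b} yields $(\average_{D(\theta^{k+1},\psi)}|u_\varepsilon|^2)^{1/2}=(\average_{D(\theta,\widetilde\psi)}|w|^2)^{1/2}\le\theta^\rho\cdot\theta^{k\rho}M_0=\theta^{(k+1)\rho}M_0$, closing the induction.

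Everything after the induction setup is the standard Avellaneda--Lin iteration, and the only genuinely delicate points are the bookkeeping ones: checking that the dilated graph $\widetilde\psi$ obeys (\ref{module}) with exactly the original $(M,\omega)$, so that a single pair $(\varepsilon_0,\theta)$ from Lemma \ref{step-1-b} is valid at every dyadic scale; and verifying that under the dilation the data acquire an honest factor $\theta^k$ (not merely $\theta^{k\rho}$), with the hypothesis $h(0)=0$ being what prevents a scale-dependent constant from contaminating the estimate for $\widetilde h$.
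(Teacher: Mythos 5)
Your proposal is correct and follows essentially the same argument as the paper: induction on $k$, rescaling $w(x)=u_\varepsilon(\theta^k x)$ to the unit-scale domain $D(1,\widetilde\psi)$ with $\widetilde\psi(x')=\theta^{-k}\psi(\theta^k x')$ (which satisfies (\ref{module}) with the same $(M,\omega)$), applying the normalized form of Lemma \ref{step-1-b} since $\varepsilon/\theta^k<\varepsilon_0$, and then using the induction hypothesis together with the factor $\theta^k\le\theta^{k\rho}$ gained by the data under dilation. The bookkeeping points you flag (uniformity of (\ref{module}) under dilation, and the role of $h(0)=0$ in controlling the sup part of the Lipschitz norm) are exactly the ones the paper relies on.
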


\begin{proof}
We  prove the lemma by an induction argument on k. 
The case $k=1$ follows directly from Lemma \ref{step-1-b}.
Now suppose that the estimate (\ref{6.5-1}) is true for some $k\ge 1$. 
Let $0<\varepsilon<\varepsilon_0\theta^k$.
 We apply Lemma \ref{step-1-b} to the function
$$
w(x)=u_\varepsilon(\theta^k x) \quad \text{ in } D(1,\psi_k), 
$$
 where $ \psi_k(x')=\theta^{-k}\psi(\theta^k x')$.
Observe that $\psi_k$ satisfies (\ref{module}) uniformly in $k$, and 
\begin{equation*}
\left\{
\aligned
 \mathcal{L}_{\frac{\varepsilon}{\theta^k}}(w) +\nabla \big(  \theta^k p_\varep(\theta^k x) \big)  
 &=0&\quad & \text{ in }D(1,\psi_k),\\
 \text{div}( w) &= \theta^k g(\theta^k x)& \quad &  \text{ in }D(1,\psi_k),\\
 w &= h(\theta^k x)&\quad& \text{ on }\Delta(1,\psi_k).
\endaligned
\right.
\end{equation*}
Since $\theta^{-k}\varepsilon<\varepsilon_0$, by the induction assumption,
\begin{equation*}
\aligned
& \left(\average_{D(\theta^{k+1},\psi)} |u_\varepsilon|^2\right)^{1/2}
  = \left(\average_{D(\theta,\psi_k)} |w|^2\right)^{1/2}\\
& \le \theta^{\rho} \max \left\{
\left(\average_{D(1,\psi_k)} |w|^2\right)^{1/2},
\theta^{k}\|g(\theta^k x)\|_{C^{\eta}(D(1,\psi_k))}, 
\|h(\theta^k x) \|_{C^{0,1}(\Delta(1,\psi_k))}\right\}\\
& \le \theta^{\rho} \max \left\{\left(\average_{D(\theta^{k},\psi)} |u_\varepsilon|^2\right)^{1/2},
\theta^{k}\|g\|_{C^{\eta}(D(1,\psi))}, \theta^{k}\|h\|_{C^{0,1}(\Delta(1,\psi))}\right\}\\
& \le \theta^{(k+1)\rho} \max \left\{\left(\average_{D(1,\psi)} |u_\varepsilon|^2\right)^{1/2},
\|g\|_{C^{\eta}(D(1,\psi))}, \|h\|_{C^{0,1}(\Delta(1,\psi))}\right\}.
\endaligned
\end{equation*}
This completes the proof.
\end{proof}

We are now ready to give the proof of Theorems \ref{theorem-6.1} and \ref{main-theorem-2}.

\begin{proof}[\bf Proof of Theorem \ref{theorem-6.1}]
By considering the function $u_\varep(rx)$ in $D(1, \psi_r)$, where $\psi_r (x^\prime) =r^{-1} \psi(rx^\prime)$,
we may assume that $r=1$.
Note that $\|\nabla \psi_r\|_\infty =\|\nabla \psi\|_\infty\le M$ and
$$
|\nabla \psi_r (x^\prime)-\nabla \psi_r (y^\prime)|
=|\nabla\psi (rx^\prime)-\nabla \psi (ry^\prime)|
\le \omega(|rx^\prime -ry^\prime|)
\le \omega(r_0 | x^\prime- y^\prime|).
$$
The bounding constants $C$ will depend on $r_0$, if $r_0>1$.

Let $\varep\le t<1$.
We may assume that $t<\varep_0 \theta$, for otherwise the estimate is trivial.
Choose $k\ge 1$ so that $\varep_0 \theta^{k+1} \le t <\varep_0 \theta^k$.
Since $\varep<\varep_0 \theta^{k-1}$, it follows from Lemma \ref{step-2-b} that
$$
\aligned
\left(\average_{D_t} |u_\varep|^2\right)^{1/2}
&\le C \left(\average_{D_{\theta^k}} |u_\varep|^2 \right)^{1/2}\\
&\le C \theta^{k\rho} \left\{ \left(\average_{D_1} |u_\varep|^2\right)^{1/2}
+\| g\|_{C^\eta(D_1)} +\| h\|_{C^{0,1}(\Delta_1)}\right\}\\
&\le C\, t^\rho
\left\{ \left(\average_{D_1} |u_\varep|^2\right)^{1/2}
+\| g\|_{C^\eta(D_1)} +\| h\|_{C^{0,1}(\Delta_1)}\right\}.
\endaligned
$$
This finishes the proof.
\end{proof}

\begin{proof}[\bf Proof of Theorem \ref{main-theorem-2}]
First, we note that by Cacciopoli's inequality and Poincar\'e inequality, it suffices to show that
\begin{equation}\label{6.8-0}
\left(\average_{B(x_0, r)\cap\Omega} |u_\varep|^2\right)^{1/2}
\le C \left(\frac{r}{R}\right)^\rho \left(\average_{B(x_0, R)\cap\Omega} |u_\varep|^2\right)^{1/2}
\end{equation}
for $0<r<c_0 R<R_0$.
By translation we may assume that $x_0=0$.
Next, we may assume that in a new coordinate system, obtained from the current system
through a rotation by an orthogonal matrix with rational entries,
\begin{equation}\label{6.8-1}
\aligned
B(0, R)\cap\Omega & =B(0, R)\cap \big\{ (x^\prime, x_d): \, x_d>\psi (x^\prime) \big\},\\
B(0, R)\cap\partial \Omega & =B(0, R)\cap \big\{ (x^\prime, x_d): \, x_d=\psi (x^\prime) \big\},
\endaligned
\end{equation}
where $\psi$ is a $C^1$ function satisfying $\psi(0)=0$ and (\ref{module}).
Here we have used the fact that for any $d\times d$ orthogonal matrix $O$ and $\delta>0$,
there exists a $d\times d$ orthogonal matrix $T$ with rational entries such that
$\|O-T\|_\infty<\delta$. Moreover, each entry of $T$ has a denominator less than a constant
depending only on $d$ and $\delta$ (see \cite{Schmutz-2008}).
Finally, we point out that if $(u_\varep, p_\varep)$ is a solution of the Stokes system (\ref{Stokes})
and $u^\beta (x)=T_{\gamma \beta} v^\gamma (y)$, $p(x)=q(y)$, 
where $T= (T_{ij})$ is an orthogonal matrix and $y=Tx$,
then
\begin{equation}\label{6.8-2}
\left\{
\aligned
-\text{\rm div}_y \big (B(y/\varep)\nabla_y v\big) +\nabla_y q  &=G (y),\\
\text{\rm div}_y (v) &=h(y),
\endaligned
\right.
\end{equation}
where $B(y)= \big (b_{k\ell}^{t\gamma} (y) \big)$ with
$b_{k\ell}^{t\gamma} (y)=T_{t\alpha} T_{\gamma\beta} T_{\ell j} T_{k i} a_{ij}^{\alpha\beta} (x)$,
$G^t (y)=T_{t\alpha} F^\alpha(x) $, and $h(y) =g(x)$.
Note that the matrix $B(y)$ is periodic, if $T$ has rational entries
(a dilation may be needed to ensure that $B$ is 1-periodic).
These observations  allow us to deduce estimate (\ref{6.8-0}) from Theorem \ref{theorem-6.1}
and complete the proof.
\end{proof}


\section{$W^{1,p}$ estimates}
\setcounter{equation}{0}

In this and next sections we establish uniform $W^{1, p}$ estimates for the Stokes system (\ref{Stokes})
under the additional condition that $A$ belongs to $V\!M\!O(\mathbb{R}^d)$:
\begin{equation}\label{module-1}
\sup_{\substack{y\in \mathbb{R}^d\\ 0< t < r}}
\average_{B(y, t)} \big | A-\average_{B(y, t)} A\big | \le \omega_1 (r),
\end{equation}
where ${\omega_1}$ is a (fixed) nondecreasing continuous function on $[0, \infty)$ and $\omega_1 (0)=0$.

The following two lemmas provide the local interior and boundary $W^{1,p}$ estimates.

\begin{lemma}\label{lemma-7.1}
Suppose that $A(y)$ satisfies  the ellipticity condition (\ref{ellipticity}) and 
smoothness condition (\ref{module-1}).
Let $(u, p)\in H^1(B(0,1); \mathbb{R}^d)\times L^2(B(0,1))$
 be a weak solution to  
 \begin{equation}\label{7.1-0}
 -\text{\rm div} \big(A(x)\nabla u\big) +\nabla p=0 \quad \text{ and }
\quad \text{\rm div} (u)=0
\end{equation}
in $B(0,1)$.
Then $|\nabla u|\in L^q (B(0,1/2))$ for any $2<q<\infty$, and
\begin{equation}\label{7.1-1}
\left(\average_{B(0,1/2)} |\nabla u|^q \right)^{1/q}
\le C_q \left( \average_{B(0,1)} |\nabla u|^2 \right)^{1/2},
\end{equation}
where $C_q$ depends only on $d$, $\mu$, $q$, and $\omega_1$ in (\ref{module-1}).
\end{lemma}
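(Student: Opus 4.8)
The plan is to prove the interior $W^{1,q}$ estimate \eqref{7.1-1} by the classical real-variable (good-$\lambda$ / Calder\'on--Zygmund) method adapted to homogenization, in the spirit of \cite{Shen-2005, Shen-2008}. The input is the $L^2$-based interior Lipschitz estimate already available in the $V\!M\!O$ setting: by the standard perturbation argument for elliptic systems with $V\!M\!O$ coefficients (freezing the coefficients and using the constant-coefficient $C^{1,\alpha}$ theory for Stokes systems as in \cite{Gia-1982}), a solution of \eqref{7.1-0} in a ball $B$ satisfies, for some small $\alpha>0$,
\begin{equation*}
\left(\average_{\frac12 B} |\nabla u|^2\right)^{1/2}
\le C \left(\average_{B} |\nabla u|^2\right)^{1/2},
\qquad
\left(\average_{tB} |\nabla u|^2\right)^{1/2}
\le C \left(\average_{B} |\nabla u|^2\right)^{1/2}
\end{equation*}
for $0<t<\tfrac12$; this is the scale-invariant Lipschitz-type bound that will serve as the ``reverse H\"older'' ingredient. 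I would first record this perturbation estimate precisely (it does \emph{not} use periodicity, only \eqref{ellipticity} and \eqref{module-1}), handling the pressure term via $\|p-\average_{\frac12 B}p\|_{L^2}\le C\|\nabla p\|_{H^{-1}}\le C\|\nabla u\|_{L^2}$ exactly as in \eqref{5.1-2}.

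Next I would set up the real-variable argument. Fix $q>2$. Given a ball $B=B(0,1)$ and the solution $(u,p)$, the quantity to control is $\average_{B'}|\nabla u|^q$ over subballs $B'\subset \tfrac12 B$. The standard scheme (see \cite[Theorem 3.1]{Shen-2008}) reduces matters to the following: for every ball $2B'\subset B$, one must produce a decomposition $\nabla u = \nabla v + \nabla w$ on $B'$ such that $v$ satisfies an $L^\infty$ (or $L^s$ for some $s>q$) bound by $\left(\average_{2B'}|\nabla u|^2\right)^{1/2}$ and $w$ satisfies an $L^2$ bound that is ``small'' relative to the right-hand side data---here the data are zero, so $w$ is controlled purely by $\left(\average_{2B'}|\nabla u|^2\right)^{1/2}$ with the Lipschitz estimate absorbing it. Concretely: on $2B'$ let $v$ solve the Stokes system $-\mathrm{div}(A\nabla v)+\nabla q' = 0$, $\mathrm{div}(v)=0$ in $2B'$ with $v=u$ on $\partial(2B')$ (so actually take $v=u$ and exploit that $u$ itself is a solution), and invoke the interior Lipschitz bound above to get $\left(\average_{B'}|\nabla u|^s\right)^{1/s}\le C\left(\average_{2B'}|\nabla u|^2\right)^{1/2}$ for all finite $s$; then the abstract $L^p$ theorem upgrades the a priori $L^2$ bound on $\nabla u$ to $L^q$ on $\tfrac12 B$, yielding \eqref{7.1-1}. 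Because the ``inhomogeneous'' remainder term vanishes ($F=g=0$), this is the cleanest possible instance of the method.

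The main technical obstacle is the pressure. Unlike the scalar or elliptic-system case, the real-variable lemma is phrased for a single function (here $\nabla u$), so the pressure must be eliminated \emph{before} invoking it. This is handled by the observation, already used repeatedly in the excerpt, that $p$ is recovered from $\nabla u$ through a bounded operator on $L^2$ up to constants, so the local reverse-H\"older / Lipschitz estimates for $\nabla u$ never require a pointwise bound on $p$; one only needs $\|p - \average p\|_{L^2}\lesssim \|\nabla u\|_{L^2}$, which follows from the equation and the Ne\v{c}as inequality. The other point requiring care is verifying the hypotheses of the real-variable theorem (the weak-reverse-H\"older self-improvement and the correct exponents), which is routine once the perturbation estimate is in hand. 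I would therefore carry out the steps in this order: (1) prove the $V\!M\!O$ perturbation Lipschitz-type estimate on balls; (2) state the abstract $L^p$ real-variable lemma (citing \cite{Shen-2008}); (3) verify its hypotheses using step (1) with vanishing right-hand side; (4) conclude \eqref{7.1-1} and note the dependence of $C_q$ on $d,\mu,q,\omega_1$ only.
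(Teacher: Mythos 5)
Your proposal has a genuine gap, and it sits exactly at the point where the VMO hypothesis has to do real work. The input estimate you propose to prove in step (1), namely
\begin{equation*}
\left(\average_{tB}|\nabla u|^2\right)^{1/2}\le C\left(\average_{B}|\nabla u|^2\right)^{1/2}
\quad\text{uniformly for } 0<t<\tfrac12,
\end{equation*}
is a Morrey--Campanato bound which, by Lebesgue differentiation, is equivalent to a local $L^\infty$ bound on $\nabla u$, i.e.\ a Lipschitz estimate. That is \emph{false} for merely $V\!M\!O$ (or even merely continuous) coefficients: freezing the coefficients and using the constant-coefficient $C^{1,\alpha}$ theory yields Lipschitz bounds only under a Dini or H\"older modulus, whereas $V\!M\!O$ gives $W^{1,p}$ for every finite $p$ but not $W^{1,\infty}$. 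Consequently step (3) becomes circular: taking $v=u$ and asserting $\bigl(\average_{B'}|\nabla u|^s\bigr)^{1/s}\le C\bigl(\average_{2B'}|\nabla u|^2\bigr)^{1/2}$ for all finite $s$ is precisely the (rescaled) conclusion of the lemma; if that were already available, a covering argument would finish the proof and no real-variable machinery would be needed. Note also that the version of the real-variable theorem stated in the paper (Theorem \ref{real-variable}) requires the ``small'' piece $F_B$ to be controlled by the data $f$, which vanishes here, so with your decomposition you would be forced to take $F_B\equiv 0$ --- again circular.

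The argument the paper intends (following \cite{Shen-2005, CP-1998}, and reducing to \emph{constant-coefficient} Stokes systems, which your decomposition never does) is: on each small ball $2B'$ compare $u$ with the solution $v$ of the frozen-coefficient system $-\mathrm{div}(\bar A\nabla v)+\nabla q'=0$, $\mathrm{div}(v)=0$, $v=u$ on $\partial(2B')$, where $\bar A=\average_{2B'}A$. The constant-coefficient interior estimates give the reverse H\"older bound for $\nabla v$ with any exponent, while $w=u-v$ satisfies $\|\nabla w\|_{L^2(2B')}\le C\|(A-\bar A)\nabla u\|_{L^2(2B')}$, which is bounded by $\eta\,\|\nabla u\|_{L^2(cB')}$ with $\eta$ small once the radius is below a threshold depending on $\omega_1$ --- this uses H\"older's inequality together with Meyers' $L^{2+\delta}$ self-improvement for $\nabla u$ (the genuinely needed a priori higher integrability, which holds for any elliptic $A$). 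One then invokes the variant of the real-variable theorem that tolerates a small-constant perturbation term $\bigl(\average|F_B|^2\bigr)^{1/2}\le\eta\bigl(\average_{cB}|F|^2\bigr)^{1/2}$. Your treatment of the pressure via $\|p-\average p\|_{L^2}\le C\|\nabla p\|_{H^{-1}}\le C\|\nabla u\|_{L^2}$ is fine, but the core comparison step must be rebuilt along these lines.
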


\begin{lemma}\label{lemma-7.2}
Suppose that $A(y)$ satisfies  (\ref{ellipticity}) and (\ref{module-1}).
Let $(u, p)\in H^1(D_1; \mathbb{R}^d)\times L^2(D_1)$
 be a weak solution to  (\ref{7.1-0}) 
in $D_1$ and $u=0$ on $\Delta_1$.
Then $|\nabla u|\in L^q (D_{1/2})$ for any $2<q<\infty$, and
\begin{equation}\label{7.2-1}
\left(\average_{D_{1/2}} |\nabla u|^q \right)^{1/q}
\le C_q \left( \average_{D_1} |\nabla u|^2 \right)^{1/2},
\end{equation}
where $C_q$ depends only on $d$, $\mu$, $q$,  $(M, \omega)$ in (\ref{module}),
and   $\omega_1$ in (\ref{module-1}).
\end{lemma}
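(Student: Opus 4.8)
The plan is to prove the boundary estimate \eqref{7.2-1} by a real-variable argument of the type developed by Shen \cite{Shen-2005, Shen-2008}, which reduces an $L^q$ bound to the verification of a reverse-H\"older type inequality together with a weak $(2,2)$ bound; the required ingredients are the uniform boundary H\"older estimate (Theorem \ref{theorem-6.1}), the boundary Cacciopoli inequality \eqref{B-Cacciopoli}, and the small-scale $W^{1,q}$ estimate for Stokes systems with VMO coefficients "frozen" at a boundary point (which is the $\varepsilon$-free, classical result, available since $A \in V\!M\!O$). First I would localize and normalize so that the domain is $D_1 = D(1,\psi)$ and $\Delta_1 = \Delta(1,\psi)$ with $\psi$ satisfying \eqref{module}. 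Since the estimate \eqref{7.2-1} is for a fixed $\varepsilon = 1$ (the operator is $-\operatorname{div}(A(x)\nabla\,)$, not $\mathcal L_\varepsilon$), the scaling \eqref{rescaling} is not directly what is being iterated; instead the point is that on balls of radius $t$ near $\partial\Omega$, the oscillation of $A$ is controlled by $\omega_1(t)\to 0$, so one can compare $u$ to the solution of the constant-coefficient Stokes system with matrix $\fint_{B} A$ on that ball.

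The key steps, in order, are as follows. \emph{Step 1: set up the decomposition.} For a surface ball $\Delta(r,\psi)$ with $2r < 1$ and center near $0$, split $u = v + w$ on $D(r,\psi)$, where $(v,q_v)$ solves the constant-coefficient Stokes system $-\operatorname{div}(A_0 \nabla v) + \nabla q_v = 0$, $\operatorname{div}(v)=0$ in $D(r,\psi)$ with $v = 0$ on $\Delta(r,\psi)$ and $v = u$ on the remaining part of $\partial D(r,\psi)$, and $A_0 = \fint_{D(r,\psi)} A$. \emph{Step 2: estimate $v$.} By the (classical, $\varepsilon$-free) boundary $C^{0,1}$ estimate for constant-coefficient Stokes systems in $C^1$ domains, and then by Theorem \ref{theorem-6.1} applied with $\varepsilon$ irrelevant (or directly by the constant-coefficient H\"older estimate used to prove Lemma \ref{step-1-b}), one gets $\big(\fint_{D(\lambda r)} |\nabla v|^q\big)^{1/q} \le C \big(\fint_{D(2r)} |\nabla u|^2\big)^{1/2}$ for small $\lambda$. \emph{Step 3: estimate $w$.} The remainder $w = u - v$ solves $-\operatorname{div}(A\nabla w) + \nabla(p - q_v) = \operatorname{div}\big((A_0 - A)\nabla v\big)$, $\operatorname{div}(w)=0$, $w=0$ on $\Delta(r,\psi)$; by the boundary Cacciopoli/energy estimate \eqref{B-Cacciopoli} applied to the Stokes system and by \eqref{module-1}, $\big(\fint_{D(2\lambda r)} |\nabla w|^2\big)^{1/2} \le C\,\omega_1(2r)^{?}\,\big(\fint_{D(2r)}|\nabla u|^2\big)^{1/2}$ plus an error that is $\le C\big(\fint_{D(2r)}|\nabla v - \fint \nabla v|^2\big)^{1/2}$ — the precise bookkeeping of the $(A_0-A)\nabla v$ term, using that $A\in V\!M\!O$, is routine but must be done with an $L^p$-improved H\"older inequality so that the VMO modulus genuinely produces a small factor. \emph{Step 4: run the real-variable machinery.} Combining Steps 2 and 3 gives, for every surface ball,
\[
\left(\fint_{D(\lambda r)} |\nabla u|^q\right)^{1/q}
\le C\left(\fint_{D(2r)}|\nabla u|^2\right)^{1/2}
+ \left(\fint_{D(2r)} |\nabla u|^q\right)^{1/q}\eta(r),
\]
with $\eta(r)\to 0$ as $r\to 0$; together with the trivial weak-$(2,2)$ bound this is exactly the hypothesis of the $L^p$ real-variable lemma of \cite{Shen-2005,Shen-2008} (boundary version), which yields \eqref{7.2-1}.

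The main obstacle is \emph{Step 3}: controlling the inhomogeneous term $\operatorname{div}\big((A_0-A)\nabla v\big)$ in the Stokes system. Two difficulties combine here. First, the pressure: the Stokes operator couples $w$ and $p - q_v$, so the energy estimate must be the one for the Stokes system (Theorem \ref{theorem-2.1} or \eqref{B-Cacciopoli} applied on $D(2\lambda r)$), not the one for a single elliptic equation; one must be careful that the right-hand side $\operatorname{div}\big((A_0-A)\nabla v\big)$ lies in $H^{-1}$ with the correct scaling. Second, converting the smallness $\fint_{B}|A - \fint_{B} A| \le \omega_1(r)$ — which is only an $L^1$-oscillation bound — into a small factor multiplying $\|\nabla u\|_{L^q}$ requires pairing it (via H\"older) with higher integrability of $\nabla v$, so one needs the $C^{0,1}$ (hence $L^\infty_{\mathrm{loc}}$) bound on $\nabla v$ from Step 2, and then trade exponents so the loss is absorbed. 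Once these two points are handled the rest is the standard perturbation-plus-real-variable scheme; the interior case (Lemma \ref{lemma-7.1}) is identical with interior balls and Cacciopoli \eqref{Ca} replacing their boundary counterparts.
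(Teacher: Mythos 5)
Your overall architecture --- freeze the coefficients at a boundary ball, compare $u$ with the solution $v$ of the constant-coefficient Stokes system, control the remainder $w$ via the energy estimate and the $V\!M\!O$ modulus, then run the real-variable (Caffarelli--Peral/Shen) iteration --- is exactly the route the paper sketches (the paper's ``proof'' is essentially a citation to \cite{Shen-2005} together with \cite{Dindos-2004} for the constant-coefficient input). However, Step 2 rests on a false regularity claim: constant-coefficient Stokes systems in merely $C^1$ domains do \emph{not} satisfy boundary $C^{0,1}$ (Lipschitz) estimates, and $\nabla v$ need not be in $L^\infty$ up to $\Delta(r,\psi)$; already for harmonic functions in $C^1$ domains the gradient can be unbounded at the boundary. (Theorem \ref{theorem-6.1}, which you offer as an alternative, only gives a $C^{0,\rho}$ bound on $v$ itself, not on $\nabla v$.) The correct ingredient, and the one the paper actually invokes, is the boundary $W^{1,q}$ estimate for constant-coefficient Stokes systems in $C^1$ domains from \cite{Dindos-2004}, i.e.\ the reverse H\"older inequality
$\bigl(\average_{D(\lambda r)}|\nabla v|^q\bigr)^{1/q}\le C\bigl(\average_{D(2r)}|\nabla v|^2\bigr)^{1/2}$
for every finite $q$; this weaker bound is all the real-variable theorem needs for the ``good'' piece $R_B=|\nabla v|$.

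The same false premise is load-bearing in your Step 3: you cannot pair the $L^1$-oscillation bound $\average_B|A-A_0|\le\omega_1(r)$ with an $L^\infty$ bound on $\nabla v$. The standard repair is to note that since $A$ is bounded, $\average_B|A-A_0|^{s}\le (2\|A\|_\infty)^{s-1}\omega_1(r)$ for every $s<\infty$, and then estimate
$\|(A-A_0)\nabla v\|_{L^2(D(r))}\le \|A-A_0\|_{L^{2s'}(D(r))}\|\nabla v\|_{L^{2s}(D(r))}$
with $2s\le q$, using the Dindos--Mitrea reverse H\"older bound on $\nabla v$; the energy estimate of Theorem \ref{theorem-2.1} (for $w\in H^1_0$, $\operatorname{div}(w)=0$) then yields the small factor $\eta(r)=C\,\omega_1(2r)^{1/(2s')}$. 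One further point worth flagging: Theorem \ref{real-variable} as stated in the paper has $F_B$ controlled by averages of the \emph{data} $f$, which vanishes here, so you must use the variant of the real-variable lemma in which $\bigl(\average_{2B}|F_B|^2\bigr)^{1/2}\le\eta\sup_{B'\supset B}\bigl(\average_{B'}|F|^2\bigr)^{1/2}$ with $\eta$ small (this version is in \cite{Shen-2007}); your Step 4 display implicitly assumes this absorption-type form, so you should state it explicitly. With these corrections the argument closes.
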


We remark that $W^{1,p}$ estimates for elliptic equations and systems with continuous or $V\!M\!O$
coefficients have been studied extensively in recent years.
In particular, estimates in Lemmas \ref{lemma-7.1} and \ref{lemma-7.2}
are known for solutions of $\text{\rm div} \big(A(x)\nabla u)=0$
(see \cite{ CP-1998, Byun-Wang-2004, Shen-2005, Krylov-2007,Byun-Wang-2008} and their references).
To prove Lemmas \ref{lemma-7.1} and \ref{lemma-7.2},
one follows the approach in \cite{Shen-2005} and apply a real-variable argument originated 
in \cite{CP-1998}. This reduces the problem to the case of Stokes systems with constant coefficients.
Note that for Stokes systems with constant coefficients,
the interior estimate (\ref{7.1-1}) is well known, while the boundary estimate (\ref{7.2-1})
in $C^1$ domains follows from \cite{Dindos-2004}.
We omit the details.

\begin{lemma}\label{lemma-7.3}
Suppose that $A(y)$ satisfies conditions (\ref{ellipticity}), (\ref{periodicity}) and (\ref{module-1}).
Let $(u_\varep, p_\varep)\in H^1(B(x_0,r); \mathbb{R}^d)\times L^2(B(x_0,r))$
 be a weak solution to  
 \begin{equation}\label{7.3-0}
 -\text{\rm div} \big(A(x/\varep)\nabla u_\varep\big) +\nabla p_\varep=0 \quad \text{ and }
\quad \text{\rm div} (u_\varep)=0
\end{equation}
in $B(x_0,r)$ for some $x_0\in \mathbb{R}^d$ and $r>0$.
Then  for any $2<q<\infty$,
\begin{equation}\label{7.3-1}
\left(\average_{B(x_0,r/2)} |\nabla u_\varep|^q \right)^{1/q}
\le C_q \left( \average_{B(x_0,r)} |\nabla u_\varep |^2 \right)^{1/2},
\end{equation}
where $C_q$ depends only on $d$, $\mu$, $q$, and $\omega_1$ in (\ref{module-1}).
\end{lemma}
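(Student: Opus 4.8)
The plan is to reduce the $\varepsilon$-dependent interior estimate to the two $\varepsilon$-free (fixed-coefficient, frozen-$\varepsilon$) estimates already recorded in Lemma \ref{lemma-7.1}, combined with the uniform Lipschitz-type estimate for $\nabla u_\varep$ available from the earlier sections. By translation and dilation (using the rescaling relations \eqref{rescaling}--\eqref{rescaling-1}, under which the $\text{div}(u_\varep)=0$ condition and homogeneity of the system are preserved) we may assume $x_0=0$ and $r=1$, so the goal becomes
\begin{equation*}
\left(\average_{B(0,1/2)} |\nabla u_\varep|^q\right)^{1/q}
\le C_q \left(\average_{B(0,1)} |\nabla u_\varep|^2\right)^{1/2}.
\end{equation*}
The dichotomy is the usual one in homogenization $W^{1,p}$ arguments: above the scale $\varepsilon$ we exploit homogenization (the solution behaves like a solution of the constant-coefficient system $\mathcal{L}_0$), while below the scale $\varepsilon$ the coefficients $A(x/\varep)$ are, after blow-up, just $VMO$ coefficients on a unit ball, so Lemma \ref{lemma-7.1} applies directly.

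First I would treat the case $\varepsilon\ge c$ for a fixed small $c>0$. Then $A(x/\varep)$ itself satisfies \eqref{ellipticity} and a $VMO$ modulus \eqref{module-1} uniformly in $\varepsilon$ on $B(0,1)$ (the $VMO$ modulus of $y\mapsto A(y/\varep)$ is controlled by $\omega_1$), so \eqref{7.1-1} of Lemma \ref{lemma-7.1} gives the conclusion at once. Second, for $0<\varepsilon<c$, I would use a covering-and-blow-up argument: cover $B(0,1/2)$ by balls $B(x_k,\varepsilon)$ with bounded overlap. On each such ball consider $v(x)=\varepsilon^{-1}u_\varep(x_k+\varepsilon x)$ and $\pi(x)=p_\varep(x_k+\varepsilon x)$, which by \eqref{rescaling} solve $-\text{div}(A(x)\nabla v)+\nabla\pi=0$, $\text{div}(v)=0$ in $B(0,1)$ (after noting $x_k+\varepsilon B(0,1)\subset B(0,1)$ when $c$ is small enough and $x_k\in B(0,1/2)$). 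Lemma \ref{lemma-7.1} then yields
\begin{equation*}
\left(\average_{B(x_k,\varepsilon/2)} |\nabla u_\varep|^q\right)^{1/q}
\le C_q \left(\average_{B(x_k,\varepsilon)} |\nabla u_\varep|^2\right)^{1/2}.
\end{equation*}
Summing the $q$-th powers over $k$ with the bounded-overlap property converts this into
\begin{equation*}
\int_{B(0,1/2)} |\nabla u_\varep|^q
\le C_q \sum_k \varepsilon^{d}\left(\average_{B(x_k,\varepsilon)} |\nabla u_\varep|^2\right)^{q/2}.
\end{equation*}
To close the estimate I need a uniform $L^\infty$-type bound on $\nabla u_\varep$ at the scale $\varepsilon$: precisely, $\left(\average_{B(x_k,\varepsilon)} |\nabla u_\varep|^2\right)^{1/2}\le C\left(\average_{B(0,1)} |\nabla u_\varep|^2\right)^{1/2}$. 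This is exactly the content of the interior Lipschitz estimate for the velocity down to the scale $\varepsilon$, i.e. Theorem \ref{theorem-3.2} (applied here with $F=0$, $g=0$, and also using Cacciopoli \eqref{Ca} to pass from $u_\varep$ to $\nabla u_\varep$ on the right side, or more directly the form of \eqref{3.4-1}); one should note that Theorem \ref{theorem-3.2} needs only ellipticity and periodicity of $A$, not $VMO$-ness. Plugging this bound in and using $\sum_k \varepsilon^d\le C|B(0,1/2)|\le C$ gives
\begin{equation*}
\int_{B(0,1/2)} |\nabla u_\varep|^q
\le C_q\left(\average_{B(0,1)} |\nabla u_\varep|^2\right)^{q/2},
\end{equation*}
which is the claimed estimate.

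The main obstacle is the bookkeeping in the covering step: one must be slightly careful that the enlarged balls $B(x_k,\varepsilon)$ (and the unit balls obtained after blow-up) stay inside $B(0,1)$ — this forces the covering to be of $B(0,1/2)$ by balls of radius $\varepsilon$ centered in $B(0,1/2)$ with the dilated ball $B(x_k,\varepsilon)\subset B(0,3/4)\subset B(0,1)$, which is fine for $\varepsilon$ small — and that the "good" scale $c$ separating the two regimes depends only on the allowed constants. The one genuinely nontrivial input is the uniform interior Lipschitz bound for $\nabla u_\varep$ at scale $\varepsilon$, but that has already been established in Theorem \ref{theorem-3.2}, so the present lemma is essentially a packaging of that estimate together with the frozen-coefficient $VMO$ estimate of Lemma \ref{lemma-7.1}. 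I would also remark that the pressure plays no role here: once $\text{div}(u_\varep)=0$ and $F=0$, the estimate \eqref{7.1-1} is purely about $\nabla u_\varep$, and the pressure is recovered afterward if needed.
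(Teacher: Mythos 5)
Your proposal is correct and follows essentially the same route as the paper: blow up at scale $\varepsilon$ to apply the frozen-coefficient $VMO$ estimate of Lemma \ref{lemma-7.1} on each small ball, control the resulting $L^2$ average of $\nabla u_\varep$ over $B(x_k,\varepsilon)$ by the global one via the uniform interior estimate down to scale $\varepsilon$ (the paper cites Theorem \ref{main-theorem-1}, whose gradient part is Theorem \ref{theorem-3.2}), and then cover $B(0,1/2)$ by balls of radius comparable to $\varepsilon$. One small correction: to put $\left(\average_{B(0,1)}|\nabla u_\varep|^2\right)^{1/2}$ rather than $\left(\average_{B(0,1)}|u_\varep|^2\right)^{1/2}$ on the right of the scale-$\varepsilon$ bound you subtract a constant from $u_\varep$ and use the Poincar\'e inequality, not Caccioppoli (which goes the other way); this is already built into the statement of Theorem \ref{main-theorem-1}.
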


\begin{proof}
By translation and dilation we may assume that $x_0=0$ and $r=1$.
We may also assume $\varep<(1/4)$.
The case $\varep\ge (1/4)$ follows directly from Lemma \ref{lemma-7.1},
as the coefficient matrix $A(x/\varep)$ satisfies (\ref{module-1}) uniformly in $\varep$.

Let $u(x)=\varep^{-1} u_\varep (\varep x)$ and $p(x)=p_\varep (\varep x)$.
Then $(u, p)$ satisfies (\ref{7.1-0}) in $B(0,1)$.
It follows that 
$$
\aligned
\left(\average_{B(0,\varep/2)} |\nabla u_\varep|^q \right)^{1/q}
&\le C \left( \average_{B(0,\varep)} |\nabla u_\varep|^2 \right)^{1/2}\\
&\le C \left( \average_{B(0,1/2)} |\nabla u_\varep|^2 \right)^{1/2},
\endaligned
$$
where we have used Theorem \ref{main-theorem-1} for the second inequality.
By translation the same argument also gives
\begin{equation}\label{7.3-3}
\left(\average_{B(y,\varep/2)} |\nabla u_\varep|^q \right)^{1/q}
\le C \left( \average_{B(y,1/2)} |\nabla u_\varep|^2 \right)^{1/2}
\end{equation}
for any $y\in B(0,1/2)$.
Estimate (\ref{7.3-1}) now follows from (\ref{7.3-3}) by covering $B(0,1/2)$
with  balls $\{ B(y_k, \varep/2)\}$, where $y_k\in B(0,1/2)$.
\end{proof}

The next theorem, whose proof may be found in \cite{Shen-2007},
provides a real-variable argument we will need for the $W^{1,p}$ estimates.

\begin{theorem}\label{real-variable}
Let $B_0$ be a ball in $\mathbb{R}^d$ and $F \in L^2(4B_0)$. Let $q>2$ and $f\in L^p(4B_0)$
 for some $2<p<q$. Suppose that for each ball $B\subset 2B_0$ with $|B|\le c_1|B_0|$, 
 there exist two measurable functions $F_B$ and $R_B$ on $2B$, such that $|F| \le |F_B|+|R_B|$ on $2B$,
\begin{equation}\label{temp2.25}
\aligned
\left(\average_{2B} |R_B|^q\right)^{1/q} & \le C_1\left\{\left(\average_{c_2B} |F|^2\right)^{1/2}
+\sup_{4B_0\supset B'\supset B}\left(\average_{B'} |f|^2\right)^{1/2}\right\},\\
\left(\average_{2B} |F_B|^2\right)^{1/2} &\le C_2 \sup_{4B_0\supset B'\supset B}\left(\average_{B'} |f|^2\right)^{1/2},
\endaligned
\end{equation}
where $C_1,C_2 >0$, $0<c_1<1$, and $c_2>2$. Then $F\in L^p(B_0)$ and
\begin{equation}\label{temp2.26}
\left(\average_{B_0} |F|^p\right)^{1/p} \le C\left\{\left(\average_{4B_0} |F|^2\right)^{1/2}
+\left(\average_{4B_0} |f|^p\right)^{1/p}\right\},
\end{equation}
where $C$ depends only on $d$, $C_1$, $C_2$, $c_1$, $c_2$, $p$ and $q$.
\end{theorem}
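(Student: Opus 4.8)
The plan is to prove Theorem~\ref{real-variable} by a purely real-variable argument, combining a Calder\'on--Zygmund stopping-time decomposition with a good-$\lambda$ inequality for a localized Hardy--Littlewood maximal function; this is the Caffarelli--Peral scheme of \cite{CP-1998} in the form developed in \cite{Shen-2005, Shen-2007}, and it uses nothing about any underlying equation — the two bounds in \eqref{temp2.25} enter only as a black box. Let $M$ denote the Hardy--Littlewood maximal operator restricted to $2B_0$, applied to $|F|^2\mathbf{1}_{4B_0}$, so that $E_\lambda=\{x\in B_0:M(|F|^2\mathbf{1}_{4B_0})(x)>\lambda\}$ is relatively open in $B_0$; since $M(|F|^2)\ge|F|^2$ a.e.\ on $B_0$, it suffices to show $(M(|F|^2\mathbf{1}_{4B_0}))^{1/2}\in L^p(B_0)$ together with the stated bound. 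The target is a distributional inequality: there are $N>1$, $c\in(0,1)$ and $\lambda_0=C\average_{4B_0}|F|^2$ such that, for every $\lambda>\lambda_0$,
$$
|\{x\in B_0:M(|F|^2)>N\lambda\}|\ \le\ \eta\,|\{x\in B_0:M(|F|^2)>\lambda\}|\ +\ |\{x\in B_0:M(|f|^2)>c\lambda\}|,
$$
with $\eta=\eta(N,c)$ that can be made as small as we like. Granting this, I would multiply by $\lambda^{\frac p2-1}$, integrate over $\lambda\in(\lambda_0,\Lambda)$, rescale $\lambda\mapsto N\lambda$ on the left, and absorb the $F$-integral provided $N^{p/2}\eta\le\tfrac12$; the truncation at a finite $\Lambda$ makes the absorbed integral finite (since $|E_\lambda|\le|B_0|$), and then $\Lambda\to\infty$ by monotone convergence. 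The leftover $\int_0^{\lambda_0}$ contributes $C\lambda_0^{p/2}|B_0|$, while the $f$-term contributes $C\|f\|_{L^p(4B_0)}^p$ by the $L^{p/2}$-boundedness of $M$ (valid since $p/2>1$). Dividing by $|B_0|$ and taking the $p$-th root yields exactly \eqref{temp2.26}.

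For the decomposition, fix $\lambda>\lambda_0$ and run a dyadic stopping time on $|F|^2\mathbf{1}_{4B_0}$ at height $\lambda$ relative to $B_0$: this produces disjoint dyadic subcubes (equivalently, comparable balls) $\{Q_i\}$ with $E_\lambda=\bigcup_iQ_i$ up to a null set and $\lambda<\average_{Q_i}|F|^2$, while the non-selection of the parent gives $\average_{\widehat{Q_i}}|F|^2\le\lambda$ for a fixed enlargement $\widehat{Q_i}$. Two technical points must be built into the construction: (i) the enlargement must be chosen large enough that $c_2Q_i\subset\widehat{Q_i}$, so that $\average_{c_2Q_i}|F|^2\le C\lambda$ — the dilation $c_2$ from \eqref{temp2.25} is absorbed here at the cost of a dimensional constant, and shrinking $c_1$ ensures $c_2Q_i\subset4B_0$; (ii) since $\average_{Q}|F|^2\le C\average_{4B_0}|F|^2<\lambda$ whenever $|Q|\gtrsim|B_0|$, the choice of $\lambda_0$ forces $|Q_i|\le c_1|B_0|$, so \eqref{temp2.25} applies with $B=Q_i$. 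Finally, for $x\in Q_i$ one localizes the maximal function at the raised level: averages over balls not contained in a fixed dilate $2^*Q_i$ are $\le C\lambda<N\lambda$, hence $\{x\in B_0:M(|F|^2)>N\lambda\}\subset\bigcup_i\{x\in Q_i:M(|F|^2\mathbf{1}_{2^*Q_i})>N\lambda\}$.

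On each $Q_i$ set $B=Q_i$ and split $F=F_B+R_B$ on $2B$, so $|F|^2\le2|F_B|^2+2|R_B|^2$ there. Intersecting the bad set with $\{M(|f|^2)\le c\lambda\}$ — on which $\sup_{4B_0\supset B'\supset B}\average_{B'}|f|^2\le Cc\lambda$ — the weak $(1,1)$ bound for $M$ and the second line of \eqref{temp2.25} give
$$
|\{x\in Q_i:M(|F_B|^2\mathbf{1}_{2^*Q_i})>\tfrac{N\lambda}{4}\}|\ \le\ \frac{C}{N\lambda}\int_{2B}|F_B|^2\ \le\ \frac{C\,C_2^2\,c}{N}\,|Q_i|,
$$
and Chebyshev at exponent $q/2$, the first line of \eqref{temp2.25}, and $\average_{c_2B}|F|^2\le C\lambda$ give
$$
|\{x\in Q_i:M(|R_B|^2\mathbf{1}_{2^*Q_i})>\tfrac{N\lambda}{4}\}|\ \le\ \frac{C}{(N\lambda)^{q/2}}\int_{2B}|R_B|^q\ \le\ \frac{C}{N^{q/2}}\,|Q_i|.
$$
Summing over $i$ (using $\sum_i|Q_i|=|E_\lambda|$) and adding back $\{M(|f|^2)>c\lambda\}$ produces the target inequality with $\eta=\frac{Cc}{N}+\frac{C}{N^{q/2}}$; I would first take $N$ large, then $c$ small, so that $N^{p/2}\eta\le\frac12$.

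The hard part will be the bookkeeping of the stopping time rather than any single estimate: one must simultaneously arrange the lower bound $\average_{Q_i}|F|^2>\lambda$, the upper bound $\average_{c_2Q_i}|F|^2\lesssim\lambda$ on the \emph{enlarged} ball occurring in the hypothesis, the smallness $|Q_i|\le c_1|B_0|$, and the localization of $M$ at the level $N\lambda$ — and one must remember that the conclusion is $L^p$ regularity of $F$, not merely an a priori estimate, which is precisely why the good-$\lambda$ integration has to be carried out with the $\lambda$-integral truncated at a finite height and then passed to the limit. Everything else — the two-term split on $Q_i$, Chebyshev, the weak $(1,1)$ bound, and the maximal-theorem estimate of the $f$-term — is entirely routine.
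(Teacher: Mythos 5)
The paper itself does not prove Theorem \ref{real-variable}; it cites \cite{Shen-2007}, and the argument there is precisely the Caffarelli--Peral good-$\lambda$ scheme you describe. Your outline is correct in all essentials: the distributional inequality with $\eta=CcN^{-1}+CN^{-q/2}$, the order of choices ($N$ large first, using $p<q$ so that $N^{(p-q)/2}$ is small, then $c$ small), the truncated $\lambda$-integration so that one obtains genuine $L^p$ membership of $F$ rather than only an a priori bound, the weak-$(1,1)$ estimate for the $F_B$ piece and the Chebyshev-at-exponent-$q/2$ estimate for the $R_B$ piece, and the dichotomy over whether a stopping cube meets $\{M(|f|^2)\le c\lambda\}$ (handled by ``adding back'' the $f$-set) are exactly the right steps.

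One piece of the bookkeeping, as literally written, does not work and must be repaired. In a dyadic stopping time the parent $\widehat{Q_i}$ is not at your disposal: it is the dyadic ancestor, and a concentric dilate $c_2Q_i$ with $c_2>2$ is in general \emph{not} contained in any ancestor of comparable size (the cube may sit in a corner of all of its ancestors), so ``choose the enlargement so that $c_2Q_i\subset\widehat{Q_i}$'' is not available; moreover non-selection of ancestors controls averages over dyadic cubes only, and $E_\lambda=\bigcup_iQ_i$ holds for the dyadic maximal function, not the uncentered one you defined. The standard fix is to decompose the open superlevel set of the \emph{uncentered} localized maximal function into Whitney-type cubes (or to select maximal dyadic cubes by the measure density of $E_\lambda$ rather than by averages of $|F|^2$): each selected $Q_i$ then has a point $z$ with $M(|F|^2\mathbf{1}_{4B_0})(z)\le\lambda$ at distance $O(\ell(Q_i))$, and for \emph{every} ball $B'\supset Q_i$ of radius $\gtrsim\ell(Q_i)$ one gets $\average_{B'}|F|^2\le C\lambda$ because a fixed dilate of $B'$ contains $z$. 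This single observation delivers simultaneously the bound $\average_{c_2Q_i}|F|^2\le C\lambda$ needed to exploit the first line of \eqref{temp2.25} and the localization $M(|F|^2)>N\lambda\Rightarrow M(|F|^2\mathbf{1}_{2^*Q_i})>N\lambda$ on $Q_i$ for $N$ large. With that substitution the rest of your argument goes through verbatim.
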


We are now ready to  prove the interior $W^{1,p}$ estimates
for the Stokes system (\ref{Stokes}).

\begin{theorem}\label{theorem-7.1}
Suppose that $A(y)$ satisfies conditions (\ref{ellipticity}), (\ref{periodicity}) and (\ref{module-1}).
Let $(u_\varep, p_\varep)\in H^1(B(x_0,r); \mathbb{R}^d)\times L^2(B(x_0,r))$
 be a weak solution to  
 \begin{equation}\label{7.6-0}
 -\text{\rm div} \big(A(x/\varep)\nabla u_\varep\big) +\nabla p_\varep=\text{\rm div} (f) \quad \text{ and }
\quad \text{\rm div} (u_\varep)=g
\end{equation}
in $B(x_0,r)$ for some $x_0\in \mathbb{R}^d$ and $r>0$.
Then  for any $2<q<\infty$,
\begin{equation}\label{7.6-1}
\aligned
&\left(\average_{B(x_0,r/2)} |\nabla u_\varep|^q \right)^{1/q}
+\left(\average_{B(x_0, r/2)} |p_\varep-\average_{B(x_0, r/2)} p_\varep|^q\right)^{1/q}\\
&\le C_q \left\{
\left( \average_{B(x_0,r)} |\nabla u_\varep |^2 \right)^{1/2}
+\left(\average_{B(x_0, r)} |f|^q\right)^{1/q} 
+\left(\average_{B(x_0, r)} |g|^q\right)^{1/q}
\right\},
\endaligned
\end{equation}
where $C_q$ depends only on $d$, $\mu$, $q$, and $\omega_1$ in (\ref{module-1}).
\end{theorem}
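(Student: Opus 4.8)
The plan is to obtain Theorem \ref{theorem-7.1} from the real-variable argument in Theorem \ref{real-variable}, using Lemma \ref{lemma-7.3} to treat the homogeneous parts and the solvability result Theorem \ref{theorem-2.1} to absorb the data $f$ and $g$. By translation and dilation (replacing $\varep$ by $\varep/r$), we may assume $x_0=0$ and $r=1$, so that $(u_\varep,p_\varep)$ solves (\ref{7.6-0}) in $B(0,1)$ and we must bound the left side of (\ref{7.6-1}) on $B(0,1/2)$. First I would reduce to the case $g=0$: on $B(0,3/4)$ one constructs $V\in W^{1,q}(B(0,3/4);\mathbb{R}^d)$ with $\text{div}(V)=g$ and $\|\nabla V\|_{L^q(B(0,3/4))}\le C\|g\|_{L^q(B(0,3/4))}$, by writing $g=(g-c)+c$ with $c=\average_{B(0,3/4)}g$, solving $\text{div}(V_0)=g-c$ with $V_0\in W^{1,q}_0(B(0,3/4);\mathbb{R}^d)$ via the Bogovskii operator, and setting $V=V_0+\tfrac{c}{d}x$. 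Then $\widetilde u:=u_\varep-V$ satisfies $\text{div}(\widetilde u)=0$ and $\mathcal L_\varep(\widetilde u)+\nabla p_\varep=\text{div}(\widetilde f)$ in $B(0,3/4)$, where $\widetilde f=f+A(x/\varep)\nabla V$ obeys $\|\widetilde f\|_{L^q(B(0,3/4))}\le C\{\|f\|_{L^q}+\|g\|_{L^q}\}$. Since $|\nabla u_\varep|\le|\nabla\widetilde u|+|\nabla V|$, it suffices to control $\big(\average_{B(0,1/2)}|\nabla\widetilde u|^q\big)^{1/q}$.

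Next I would verify the hypotheses of Theorem \ref{real-variable} for $F=|\nabla\widetilde u|$, with its generic ``$f$'' taken to be $|\widetilde f|$, with $p=q$ (the target exponent) and with ``$q$'' replaced by $q+1$; the reference balls $B_0$ are small balls (fixing $c_1$ small) with $4B_0\subset B(0,3/4)$ that will later be used to cover $B(0,1/2)$. Given $B=B(z,\rho)\subset 2B_0$ with $|B|\le c_1|B_0|$, so that $4B\subset 4B_0\subset B(0,3/4)$, let $(\phi,\pi_1)\in H^1_0(4B;\mathbb{R}^d)\times L^2(4B)$ solve $\mathcal L_\varep(\phi)+\nabla\pi_1=\text{div}(\widetilde f)$, $\text{div}(\phi)=0$ in $4B$, $\phi=0$ on $\partial(4B)$; by the scale-invariant form of Theorem \ref{theorem-2.1}, $\big(\average_{4B}|\nabla\phi|^2\big)^{1/2}\le C\big(\average_{4B}|\widetilde f|^2\big)^{1/2}$. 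Then $w:=\widetilde u-\phi$ is a weak solution of the homogeneous system $\mathcal L_\varep(w)+\nabla(p_\varep-\pi_1)=0$, $\text{div}(w)=0$ in $4B$, so Lemma \ref{lemma-7.3} gives
$$\Big(\average_{2B}|\nabla w|^{q+1}\Big)^{1/(q+1)}\le C\Big(\average_{4B}|\nabla w|^2\Big)^{1/2}\le C\Big\{\Big(\average_{4B}|\nabla\widetilde u|^2\Big)^{1/2}+\Big(\average_{4B}|\widetilde f|^2\Big)^{1/2}\Big\}.$$
Setting $R_B=|\nabla w|$ and $F_B=|\nabla\phi|$ on $2B$, we have $|\nabla\widetilde u|\le F_B+R_B$ on $2B$, and both inequalities in (\ref{temp2.25}) hold with $c_2=4$, since $4B$ is one of the admissible balls $B'$ with $B\subset B'\subset 4B_0$.

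Then Theorem \ref{real-variable} yields $\big(\average_{B_0}|\nabla\widetilde u|^q\big)^{1/q}\le C\big\{(\average_{4B_0}|\nabla\widetilde u|^2)^{1/2}+(\average_{4B_0}|\widetilde f|^q)^{1/q}\big\}$; covering $B(0,1/2)$ by finitely many such $B_0$, summing, and undoing the reduction to $g=0$ and the initial rescaling gives the bound for $\|\nabla u_\varep\|$ in (\ref{7.6-1}). For the pressure, the first equation in (\ref{7.6-0}) gives $\nabla p_\varep=\text{div}(f+A(x/\varep)\nabla u_\varep)$ in $B(x_0,r/2)$, whence $\|p_\varep-\average_{B(x_0,r/2)}p_\varep\|_{L^q(B(x_0,r/2))}\le C\|\nabla p_\varep\|_{W^{-1,q}(B(x_0,r/2))}\le C\big\{\|f\|_{L^q(B(x_0,r/2))}+\|\nabla u_\varep\|_{L^q(B(x_0,r/2))}\big\}$, and the last term is already controlled.

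The main obstacle will be the bookkeeping around the constraint $\text{div}(u_\varep)=g$: since $\int_B g$ need not vanish one cannot subtract a $W^{1,q}_0$ corrector globally, so the corrector $V$ and the local Stokes correctors $\phi$ must be set up on the right nested balls so that the two structural inequalities of Theorem \ref{real-variable} hold with admissible constants $0<c_1<1$ and $c_2>2$; verifying the scale-invariant versions of Theorem \ref{theorem-2.1} (with zero boundary data and right-hand side $\text{div}(\widetilde f)$) and of Lemma \ref{lemma-7.3} is also required, but is routine.
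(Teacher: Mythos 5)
Your proposal is correct and follows essentially the same route as the paper: rescale to $B(0,1)$, reduce to $g=0$, then on each small ball $4B$ split off the solution of the Dirichlet problem with data $\mathrm{div}(\widetilde f)$ so that the remainder is a homogeneous solution controlled by Lemma \ref{lemma-7.3}, and feed $F_B$, $R_B$ into Theorem \ref{real-variable} with exponent $q+1$; the pressure is then recovered from $\nabla p_\varep$ in $W^{-1,q}$ exactly as in the paper. The only (immaterial) difference is the divergence correction: you use a Bogovskii-type corrector $V$ with $\mathrm{div}(V)=g$, while the paper subtracts $\nabla w$ with $\Delta w=g$ in $B(0,1)$, $w=0$ on $\partial B(0,1)$.
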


\begin{proof}
By translation and dilation we may assume that $x_0=0$ and $r=1$.
Note that the estimate for $p_\varep$ in (\ref{7.6-1}) follows easily from the estimate for $\nabla u_\varep$.
Also we may assume that $g=0$ by considering $u_\varep - \nabla w$, where
$w$ is a scalar function such that $\Delta w=g$ in $B(0,1)$ and
$w=0$ on $\partial B(0,1)$.

To apply Theorem \ref{real-variable}, 
for each $B=B(y, t)\subset B(0, 3/4)$ with $0<t<(1/64)$, we write
$u_\varep=v_\varep +z_\varep$, where $v_\varep\in H^1_0(4B; \mathbb{R}^d)$ and
$$
\left\{
\aligned
\mathcal{L}_\varep (v_\varep) +\nabla \pi_\varep  & =\text{\rm div}(f)& \quad & \text{ in } 4B,\\
\text{\rm div} (v_\varep) &=0&\quad &\text{ in  } 4B.
\endaligned
\right.
$$
Note that
\begin{equation}\label{7.6-2}
\average_{4B} |\nabla v_\varep|^2 \le C \average_{4B} |f|^2.
\end{equation}
Also, since $\mathcal{L}_\varep  (z_\varep)+\nabla (p_\varep-\pi_\varep)=0$
and div$(z_\varep)=0$ in $4B$,
we may apply Lemma \ref{lemma-7.3} to obtain
\begin{equation}\label{7.6-4}
\aligned
\left(\average_{2B} |\nabla z_\varep|^{\bar{q}}\right)^{1/\bar{q}}
&\le C \left(\average_{4B} |\nabla z_\varep|^2 \right)^{1/2}\\
&\le C \left(\average_{4B} |\nabla u_\varep|^2\right)^{1/2}
+C \left(\average_{4B} |f|^2\right)^{1/2},
\endaligned
\end{equation}
where $\bar{q}=q +1$ and we have used (\ref{7.6-2}) for the last inequality.

Finally, let $F=|\nabla u_\varep|$, $F_B=|\nabla v_\varep|$ and $R_B=|\nabla z_\varep|$.
Note that $|F|\le |F_B|+|R_B|$ on $4B$, and in view of (\ref{7.6-2}) and (\ref{7.6-4}),
 we have proved that
$$
\aligned
\left(\average_{2B} |R_B|^{\bar{q}}\right)^{1/\bar{q}}
 & \le C \left(\average_{4B} |F|^2\right)^{1/2}
+C \left(\average_{4B} |f|^2\right)^{1/2},\\
\left(\average_{2B} |F_B|^2\right)^{1/2}
& \le C \left(\average_{4B} |f|^2\right)^{1/2}.
\endaligned
$$
This allows us to use Theorem \ref{real-variable} to conclude that
$$
\left(\average_{B(x_0,1/16)} |\nabla u_\varep|^q\right)^{1/q}
\le C \left\{ \left(\average_{B(0, 1)}|\nabla u_\varep|^2\right)^{1/2}
+\left(\average_{B(0,1)} |f|^q\right)^{1/q} \right\}
$$
for any $x_0\in B(0,1/2)$,
which gives the desired estimate for $\nabla u_\varep$
by a simple covering argument.
\end{proof}



\section{Proof of Theorem \ref{main-theorem-3}}
\setcounter{equation}{0}

In this section we establish uniform boundary
$W^{1,p}$ estimates and gives the proof of Theorem \ref{main-theorem-3}.
Throughout this section we will assume that
$A$ satisfies conditions (\ref{ellipticity}), (\ref{periodicity}) and (\ref{module-1})
and  that $\Omega$ is a bounded $C^1$ domain.

We begin with a boundary H\"older estimate.

\begin{lemma}\label{lemma-8.0}
Let $x_0\in \partial\Omega$ and $0<R<R_0$, where $R_0=\text{\rm diam}(\Omega)$.
Let $(u_\varep, p_\varep)\in W^{1,2}(B(x_0,R)\cap \Omega; \mathbb{R}^d)\times L^2(B(x_0,R)\cap\Omega)$
 be a weak solution to  
 \begin{equation}\label{8.0-0}
 -\text{\rm div} \big(A(x/\varep)\nabla u_\varep\big) +\nabla p_\varep=0 \quad \text{ and }
\quad \text{\rm div} (u_\varep)=0
\end{equation}
in $B(x_0,R)\cap\Omega$ and $u_\varep =0$ on $B(x_0, R)\cap \partial\Omega$.
Then
\begin{equation}\label{8.0-1}
|u_\varep (x) -u_\varep (y)|
\le C \left(\frac{|x-y|}{R} \right)^\rho
\left(\average_{B(x_0, R)\cap \Omega} |u_\varep|^2\right)^{1/2}
\end{equation}
for any $x,y\in B(x_0, R/2)\cap\Omega$,
where $0<\rho<1$ and $C$ depends only on $d$, $\rho$, $A$ and $\Omega$.
\end{lemma}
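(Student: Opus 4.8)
The plan is to deduce the pointwise H\"older bound \eqref{8.0-1} from a Campanato-type growth condition for $u_\varep$, obtained by patching together the boundary estimate of Theorem \ref{main-theorem-2}, the interior estimate of Theorem \ref{theorem-3.2}, and the local $W^{1,q}$ estimates of Section 7. After translating $x_0$ to the origin and using a $C^1$ boundary chart (as in the proof of Theorem \ref{main-theorem-2}, so that half-balls sit between $D_r$-regions), I would first record the consequence of Theorem \ref{main-theorem-2}: since $g=0$, $u_\varep=0$ on $B(x_0,R)\cap\partial\Omega$, the boundary Cacciopoli inequality \eqref{B-Cacciopoli} together with the Poincar\'e inequality (applicable without subtracting the mean, because $u_\varep$ vanishes on a boundary portion) turns \eqref{boundary-holder} into
\begin{equation*}
\left(\average_{B(z,t)\cap\Omega}|u_\varep|^2\right)^{1/2}
\le C\left(\frac{t}{R}\right)^{\rho}\left(\average_{B(x_0,R)\cap\Omega}|u_\varep|^2\right)^{1/2}
\qquad\big(z\in B(x_0,R/2)\cap\partial\Omega,\ \varep\le t\le c_0R\big).
\end{equation*}

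Next I would reduce matters to proving the estimate
\begin{equation*}
\left(\average_{B(x,r)\cap\Omega}\big|u_\varep-c_{x,r}\big|^2\right)^{1/2}
\le C\left(\frac{r}{R}\right)^{\rho}\left(\average_{B(x_0,R)\cap\Omega}|u_\varep|^2\right)^{1/2}
\qquad\big(x\in B(x_0,R/2)\cap\Omega,\ 0<r<c_0R\big),
\end{equation*}
where $c_{x,r}=\average_{B(x,r)\cap\Omega}u_\varep$ if $B(x,r)\subset\Omega$ and $c_{x,r}=0$ if $B(x,r)\cap\partial\Omega\neq\emptyset$; granting this, \eqref{8.0-1} follows from the Campanato characterization of $C^{0,\rho}(\,\overline{B(x_0,R/2)\cap\Omega}\,)$, valid since $\Omega$ is a bounded $C^1$, hence Lipschitz, domain. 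To verify the Campanato condition I split according to the size of $r$ relative to $\varep$ and of $d(x):=\operatorname{dist}(x,\partial\Omega)$ relative to $r$. For $r\ge\varep$ with $d(x)\le 8r$ one dominates $B(x,r)\cap\Omega$ by a boundary ball $B(z,9r)\cap\Omega$, $z\in\partial\Omega$, and invokes the displayed consequence of Theorem \ref{main-theorem-2} (with $c_{x,r}=0$); for $r\ge\varep$ with $d(x)>8r$ the ball $B(x,r)$ lies well inside $\Omega$, Theorem \ref{theorem-3.2} applied on $B(x,d(x)/2)$ gives $\average_{B(x,r)}|\nabla u_\varep|^2\le Cd(x)^{-2}\average_{B(x,d(x)/2)}|u_\varep-c|^2$, the latter being controlled by the displayed bound at the nearest boundary point to $x$, and then Poincar\'e together with the elementary inequality $r^2d(x)^{2\rho-2}\le r^{2\rho}$ (valid since $r\le d(x)$ and $\rho<1$) closes this case.

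The delicate range is $0<r<\varep$, where the oscillating structure and the absence of smoothness of $A$ rule out any decay of $\nabla u_\varep$ below the microscopic scale, so Theorem \ref{theorem-3.2} is unavailable. Here I would pass, by the rescaling $v(y)=\varep^{-1}u_\varep(\varep y)$, to a Stokes system with the \emph{non-oscillating} $V\!M\!O$ matrix $A(\cdot)$, and apply Lemma \ref{lemma-7.1} (interior) and Lemma \ref{lemma-7.2} (boundary, using $u_\varep=0$ on $\partial\Omega$) for a fixed $q$ with $q\ge d/(1-\rho)$ (so that $q>d$ and $\alpha:=1-\tfrac dq\ge\rho$), obtaining reverse-H\"older bounds $\big(\average|\nabla u_\varep|^q\big)^{1/q}\le C\big(\average|\nabla u_\varep|^2\big)^{1/2}$ on (half-)balls of radius comparable to $\varep$. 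Morrey's embedding and the Cacciopoli inequality then give $u_\varep\in C^{0,\alpha}$ on such (half-)balls with $[u_\varep]_{C^{0,\alpha}}\le C\varep^{-\alpha}\big(\average_{C\varep\text{-ball}\cap\Omega}|u_\varep-c|^2\big)^{1/2}\le C\varep^{-\alpha}(\varep/R)^{\rho}\big(\average_{B(x_0,R)\cap\Omega}|u_\varep|^2\big)^{1/2}$ by the case $r\ge\varep$ already treated. Finally, for $0<r<\varep$: if $B(x,r)$ meets $\partial\Omega$ there is a point at which $u_\varep$ vanishes, so $\average_{B(x,r)\cap\Omega}|u_\varep|^2\le[u_\varep]_{C^{0,\alpha}}^2(2r)^{2\alpha}$; if $B(x,r)\subset\Omega$ one uses Poincar\'e and the oscillation bound $|u_\varep(w)-u_\varep(w')|\le[u_\varep]_{C^{0,\alpha}}|w-w'|^{\alpha}$ over $B(x,r)$; in either case the right-hand side is $\le C(r/\varep)^{2\alpha}(\varep/R)^{2\rho}\big(\average_{B(x_0,R)\cap\Omega}|u_\varep|^2\big)\le C(r/R)^{2\rho}\big(\average_{B(x_0,R)\cap\Omega}|u_\varep|^2\big)$ because $r<\varep$ and $\alpha\ge\rho$. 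The main obstacle is precisely this sub-microscopic range: one must substitute the $V\!M\!O$ regularity of Section 7 for the missing homogenization-type decay and check that $\alpha\ge\rho$ provides exactly the needed room; everything else — the interior/boundary dichotomy and the verification that all auxiliary balls stay inside $B(x_0,R)\cap\Omega$ (with a crude bound replacing the decay when $d(x)$ is comparable to $R$) — is routine bookkeeping, which is why I would only sketch it.
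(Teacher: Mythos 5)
Your proposal is correct and follows essentially the same route as the paper: the paper establishes the Morrey-type decay $\average_{B(y,r)\cap\Omega}|\nabla u_\varep|^2\le C\,r^{2\rho-2}\average_{B(x_0,R)\cap\Omega}|\nabla u_\varep|^2$ at all scales by combining Theorem \ref{main-theorem-2} (and its interior counterpart) for $r\ge\varep$ with the rescaled $W^{1,q}$ estimates of Lemmas \ref{lemma-7.1} and \ref{lemma-7.2} for $r<\varep$, choosing $q$ with $1-\frac{d}{q}\ge\rho$ (exactly your condition $\alpha\ge\rho$), and then concludes by Morrey's lemma. Your Campanato formulation for the oscillation of $u_\varep$ itself is an equivalent repackaging of the same two ingredients.
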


\begin{proof}
By translation and dilation we may assume that $x_0=0$ and $R=1$.
The case $\varep\ge (1/4)$ follows directly from the local boundary $W^{1,p}$ estimates in Lemma \ref{lemma-7.2}
by Sobolev imbedding.
To treat the case $0<\varep<(1/4)$,
we note that if $0<r<\varep$, we may deduce from Lemma \ref{lemma-7.2} by rescaling that
\begin{equation}\label{8.0-3}
\aligned
\left(\average_{B(0, r)\cap\Omega} |\nabla u_\varep|^2\right)^{1/2}
&\le C_q \left( \frac{\varep}{r} \right)^{\frac{d}{q}} \left(\average_{B(0, \varep)\cap\Omega} |\nabla u_\varep|^q\right)^{1/q}\\
&\le C_q\left( \frac{\varep}{r} \right)^{\frac{d}{q}} \left(\average_{B(0, 2\varep)\cap\Omega} |\nabla u_\varep|^2\right)^{1/2}
\endaligned
\end{equation}
for any $2<q<\infty$, where we have used H\"older's inequality for the first inequality.
This, together with the estimate in Theorem \ref{main-theorem-2}, implies that
\begin{equation}\label{8.0-4}
\left(\average_{B(0, r)\cap\Omega} |\nabla u_\varep|^2\right)^{1/2}
\le C_\rho \, r^{\rho-1}
\left(\average_{B(0, 1)\cap\Omega} |\nabla u_\varep|^2\right)^{1/2}
\end{equation}
for any $0<r<(1/2)$, where $0<\rho<1$.
A similar argument gives
\begin{equation}\label{8.0-5}
\left(\average_{B(y, r)\cap\Omega} |\nabla u_\varep|^2\right)^{1/2}
\le C_\rho \, r^{\rho-1}
\left(\average_{B(0, 1)\cap\Omega} |\nabla u_\varep|^2\right)^{1/2}
\end{equation}
for any $y\in B(0,1/2)$ and $0<r<(1/2)$.
The estimate (\ref{8.0-1}) now follows.
\end{proof}

\begin{lemma}\label{lemma-8.2}
Let $x_0\in \partial\Omega$ and $0<R<R_0$, where $R_0=\text{\rm diam}(\Omega)$.
Let $(u_\varep, p_\varep)\in W^{1,2}(B(x_0,R)\cap \Omega; \mathbb{R}^d)\times L^2(B(x_0,R)\cap\Omega)$
 be a weak solution to  (\ref{8.0-0})
in $B(x_0,R)\cap\Omega$ and $u_\varep =0$ on $B(x_0, R)\cap \partial\Omega$.
Then  for any $2<q<\infty$,
\begin{equation}\label{8.2-1}
\left(\average_{B(x_0,R/2)\cap\Omega} |\nabla u_\varep|^q \right)^{1/q}\\
\le C_q \left( \average_{B(x_0,R)\cap \Omega} |\nabla u_\varep |^2 \right)^{1/2},
\end{equation}
where $C_q$ depends only on $d$, $q$, $A$, and $\Omega$.
\end{lemma}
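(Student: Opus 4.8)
The plan is to follow the scheme of Lemma \ref{lemma-7.3}: by translation and dilation take $x_0=0$, $R=1$, set $M=\big(\average_{B(0,1)\cap\Omega}|\nabla u_\varep|^2\big)^{1/2}$, and reduce to the microscopic scale by a blow-up on a mesh of $\varep$-balls covering $B(0,1/2)\cap\Omega$. The new feature beyond the interior case is the boundary layer: near $\partial\Omega$ one only has the boundary H\"older estimate of Theorem \ref{main-theorem-2}, which permits $|\nabla u_\varep|$ to grow like a power of $\text{dist}(\cdot,\partial\Omega)$, so one first fixes the exponent $\rho=\rho(q)\in(1-\tfrac1q,1)$ to be used in Theorem \ref{main-theorem-2} and in (\ref{8.0-5}). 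When $\varep$ is bounded below by a fixed constant $c$, (\ref{8.2-1}) follows from Lemmas \ref{lemma-7.1} and \ref{lemma-7.2} by a covering argument (after flattening $\partial\Omega$ near the relevant boundary points): no periodicity is needed there, since ellipticity and the $V\!M\!O$ condition (\ref{module-1}) are preserved under rotations and, for $\varep\ge c$, under $x\mapsto A(x/\varep)$ up to a harmless rescaling of $\omega_1$. So assume $0<\varep<c$.

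Cover $B(0,1/2)\cap\Omega$ by balls $\{B(y_k,\varep/8)\}$ of bounded overlap with $y_k\in B(0,1/2)\cap\overline\Omega$, and split the indices according to $d_k:=\text{dist}(y_k,\partial\Omega)$. If $d_k\ge c_0\varep$ (so $B(y_k,c_0\varep)\subset\Omega$), Lemma \ref{lemma-7.3} bounds $\big(\average_{B(y_k,\varep/8)}|\nabla u_\varep|^q\big)^{1/q}$ by $\big(\average_{B(y_k,c_0\varep)}|\nabla u_\varep|^2\big)^{1/2}$; the interior estimate of Theorem \ref{main-theorem-1}, whose constant does not degenerate, then passes to the internally tangent ball $B(y_k,d_k)\subset B(z_k,2d_k)$ for the nearest $z_k\in\partial\Omega$, and (\ref{8.0-5}) (a consequence of Theorem \ref{main-theorem-2}) gives $C_\rho\,d_k^{\,\rho-1}M$ when $d_k\le\tfrac18$, and $CM$ when $d_k>\tfrac18$. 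If $d_k<c_0\varep$, take the nearest $z_k\in\partial\Omega$, flatten $\partial\Omega$ there, and apply Lemma \ref{lemma-7.2} to the blow-up $v(x)=\varep^{-1}u_\varep(z_k+\varep x)$, whose coefficient matrix $A(z_k/\varep+\,\cdot\,)$ is again elliptic and $V\!M\!O$ with the same moduli; undoing the scaling and comparing the regions $D_r$ with balls centred at $z_k$ bounds $\big(\average_{B(y_k,\varep/8)\cap\Omega}|\nabla u_\varep|^q\big)^{1/q}$ by $\big(\average_{B(z_k,c\varep)\cap\Omega}|\nabla u_\varep|^2\big)^{1/2}$, which Theorem \ref{main-theorem-2} (with $r=c\varep$, $R=\tfrac14$) controls by $C_\rho\,\varep^{\,\rho-1}M$.

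To conclude, write $\int_{B(0,1/2)\cap\Omega}|\nabla u_\varep|^q\le\sum_k\int_{B(y_k,\varep/8)\cap\Omega}|\nabla u_\varep|^q$ and group the indices into dyadic shells $2^{-j-1}<d_k\le2^{-j}$ (with $c_0\varep\lesssim2^{-j}\lesssim1$), the deep interior $d_k>\tfrac18$, and the boundary layer $d_k<c_0\varep$. The $j$-th shell contains $O(2^{-j}\varep^{-d})$ balls, each contributing $O(\varep^{d}2^{-jq(\rho-1)}M^q)$; the boundary layer has $O(\varep^{1-d})$ balls, each contributing $O(\varep^{d+q(\rho-1)}M^q)$; and the deep interior has $O(\varep^{-d})$ balls, each $O(\varep^{d}M^q)$. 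Summing, the total is at most $C\big(\sum_{2^{-j}\ge c_0\varep}2^{-j(1+q(\rho-1))}+\varep^{1+q(\rho-1)}+1\big)M^q\le C_qM^q$, the geometric series converging uniformly in $\varep$ precisely because $\rho>1-\tfrac1q$; dividing by $|B(0,1/2)\cap\Omega|\simeq1$ and taking $q$-th roots gives (\ref{8.2-1}). The crux, and the only place the argument could fail, is this last balancing: since $\Omega$ is merely $C^1$ we must go through Theorem \ref{main-theorem-2}, which only yields $|\nabla u_\varep|\lesssim\text{dist}(\cdot,\partial\Omega)^{\rho-1}$ near $\partial\Omega$, so $\rho$ has to be chosen close enough to $1$ and the sum organised by distance to the boundary for the estimate to close with a constant independent of $\varep$.
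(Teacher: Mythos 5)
Your argument is correct, and the threshold you isolate ($\rho>1-\tfrac1q$ so that $\sum_j 2^{-j(1+q(\rho-1))}$ converges) is exactly the balancing the paper relies on; but your route through it is organized differently. You cover $B(0,1/2)\cap\Omega$ by balls of radius $\simeq\varep$, upgrade each to an $L^q$ bound on $\nabla u_\varep$ via Lemma \ref{lemma-7.3} or \ref{lemma-7.2}, bridge from scale $\varep$ to scale $\delta(y_k)$ with Theorem \ref{main-theorem-1}, invoke the boundary estimate (\ref{8.0-5}) for the \emph{gradient} in $L^2$ average to get the $\delta^{\rho-1}$ decay, and then sum over dyadic shells. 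The paper instead uses a Whitney-type local estimate at scale $\delta(y)$: combining Lemma \ref{lemma-7.3} with Caccioppoli's inequality on $B(y,c\,\delta(y))$ gives $\average_{B(y,c\delta(y))}|\nabla u_\varep|^q\le C\average_{B(y,2c\delta(y))}|u_\varep/\delta|^q$, which it integrates in $y$ to reduce (\ref{8.2-1}) to $\int_{B(0,3/4)\cap\Omega}|u_\varep/\delta|^q$; this is then controlled by the \emph{pointwise} H\"older bound $|u_\varep(x)|\le C\,\delta(x)^\rho$ of Lemma \ref{lemma-8.0}, with the same condition $(1-\rho)q<1$ guaranteeing integrability of $\delta^{(\rho-1)q}$. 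The paper's version avoids explicit ball counting and the intermediate use of Theorem \ref{main-theorem-1} to pass from $\varep$ to $\delta(y_k)$ (Caccioppoli does that job in one step), while yours stays with $\nabla u_\varep$ throughout and makes the geometry of the boundary layer completely explicit; both are legitimate, and the only mild bookkeeping your version adds is the bounded-overlap covering and the measure bound $|\{\delta\le t\}|=O(t)$ for $C^1$ domains, which are standard.
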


\begin{proof}
By translation and dilation we may assume that $x_0=0$ and $R=1$.
Let $\delta(x)=\text{dist}(x, \partial\Omega)$.
It follows from the interior $W^{1,p}$ estimates in Lemma \ref{lemma-7.3} that
\begin{equation}\label{8.2-3}
\average_{B(y, c\, \delta(y))} |\nabla u_\varep(x)|^q\, dx
\le C \average_{B(y, 2c\, \delta(y))} \left| \frac{u_\varep (x)}{\delta(x)} \right|^q\, dx
\end{equation}
for any $y\in B(0, 1/2)\cap\Omega$, where $c=c(\Omega)>0$ is sufficiently small.
Integrating both sides of (\ref{8.2-3}) in $y$ over $B(0, 1/2)\cap\Omega$
 yields
\begin{equation}\label{8.2-4}
\int_{B(0, 1/2)\cap\Omega}
|\nabla u_\varep (x)|^q \, dx\le C \int_{B(0,3/4)\cap\Omega}  \left| \frac{u_\varep(x)}{\delta(x)} \right|^q\, dx.
\end{equation}
Finally, note that by Lemma \ref{lemma-8.0},
\begin{equation}\label{8.2-5}
|u_\varep (x)|\le C\, \big[\delta (x) \big]^\rho \left(\average_{B(0,1)\cap\Omega} |u_\varep|^2\right)^{1/2}
\end{equation}
for any $x\in B(0, 3/4)\cap\Omega$.
Choosing $\rho\in (0,1)$ so that $(1-\rho)q<1$,
we obtain estimate (\ref{8.2-1}) by substituting (\ref{8.2-5}) into the right hand side of
(\ref{8.2-4}).
\end{proof}

The following theorem gives the boundary $W^{1,p}$ estimates for the Stokes system (\ref{Stokes}).

\begin{theorem}\label{theorem-8.3}
Suppose that $A(y)$ satisfies conditions (\ref{ellipticity}), (\ref{periodicity}), and (\ref{module-1}).
Let $\Omega$ be a bounded $C^1$ domain in $\mathbb{R}^d$.
Let $(u_\varep, p_\varep)\in H^1(B(x_0,R)\cap\Omega; \mathbb{R}^d)\times L^2(B(x_0,R)\cap \Omega)$
 be a weak solution to  
 \begin{equation}\label{8.3-0}
 -\text{\rm div} \big(A(x/\varep)\nabla u_\varep\big) +\nabla p_\varep=\text{\rm div} (f) \quad \text{ and }
\quad \text{\rm div} (u_\varep)=g
\end{equation}
in $B(x_0,R)\cap \Omega$ for some $x_0\in \partial\Omega$ and $0<R<R_0$,
where $R_0=\text{diam} (\Omega)$.
Then  for any $2<q<\infty$,
\begin{equation}\label{8.3-1}
\aligned
&\left(\average_{B(x_0,R/2)\cap\Omega} |\nabla u_\varep|^q \right)^{1/q}
+\left(\average_{B(x_0, R/2)\cap\Omega} |p_\varep-\average_{B(x_0, R/2)\cap\Omega} p_\varep|^q\right)^{1/q}\\
&\le C_q \left\{
\left( \average_{B(x_0,R)\cap \Omega} |\nabla u_\varep |^2 \right)^{1/2}
+\left(\average_{B(x_0, R)\cap\Omega} |f|^q\right)^{1/q} 
+\left(\average_{B(x_0, R)\cap\Omega} |g|^q\right)^{1/q}
\right\},
\endaligned
\end{equation}
where $C_q$ depends only on $d$, $\mu$, $q$, $\omega_1$ in (\ref{module-1}), and $\Omega$.
\end{theorem}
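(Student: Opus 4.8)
The plan is to adapt the real-variable argument used for the interior estimate in Theorem \ref{theorem-7.1} to balls centred at the boundary point $x_0$, invoking the interior $W^{1,q}$ bound (Lemma \ref{lemma-7.3}) for balls that stay inside $\Omega$ and the boundary $W^{1,q}$ bound (Lemma \ref{lemma-8.2}) for balls that reach $\partial\Omega$. First I would normalise, by translation and dilation, so that $x_0=0$ and $R=1$; after rescaling the domain is still $C^1$ with $C^1$ character controlled by that of $\Omega$ and by $R_0$, and the coefficient matrix is of the same form with $\varep$ replaced by $\varep/R$. As in Theorem \ref{theorem-7.1} the bound for $p_\varep$ will follow from the bound for $\nabla u_\varep$ together with the local pressure estimate $\|p_\varep-\average_{B(0,1/2)\cap\Omega}p_\varep\|_{L^q(B(0,1/2)\cap\Omega)}\le C\{\|\nabla u_\varep\|_{L^q(B(0,1/2)\cap\Omega)}+\|f\|_{L^q(B(0,1/2)\cap\Omega)}\}$, which comes from the first equation in (\ref{8.3-0}) and the $L^q$ inequality bounding a mean-zero function by the $W^{-1,q}$ norm of its gradient on the Lipschitz domain $B(0,1/2)\cap\Omega$. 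Finally I would remove $g$: choose a right inverse of the divergence $\Phi\in W^{1,q}_0(B(0,1)\cap\Omega;\mathbb{R}^d)$ with $\operatorname{div}\Phi=g$ and $\|\Phi\|_{W^{1,q}}\le C\|g\|_{L^q}$ (cf.\ \cite{Duran-2012}), and replace $u_\varep$ by $u_\varep-\Phi$. This is divergence free, still vanishes on $B(0,1)\cap\partial\Omega$ because $\Phi=0$ on $\partial(B(0,1)\cap\Omega)\supset B(0,1)\cap\partial\Omega$, and solves the Stokes system with right-hand side $\operatorname{div}(f+A(x/\varep)\nabla\Phi)$, the new inhomogeneity having $L^q$ norm at most $C\|g\|_{L^q}$; so we may assume $g=0$.

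To run Theorem \ref{real-variable}, I set $B_0=B(0,c_0)$ with $c_0=c_0(\Omega)$ small, extend $|\nabla u_\varep|$ and $|f|$ by zero outside $B(0,1)\cap\Omega$, and take $F=|\nabla u_\varep|$ with the exponent $\bar q=q+1$ in the role of ``$q$'' of that theorem and $q$ in the role of ``$p$''. Fix a ball $B=B(y,t)\subset 2B_0$ with $|B|\le c_1|B_0|$, and put $\delta(y)=\operatorname{dist}(y,\partial\Omega)$. If $\delta(y)\ge 8t$ then either $B(y,8t)\cap\overline\Omega=\emptyset$, in which case $F\equiv0$ on $2B$ and there is nothing to do, or $B(y,8t)\subset\Omega$; in the latter case I proceed exactly as in Theorem \ref{theorem-7.1}, writing $u_\varep=v_\varep+z_\varep$ in $B(y,4t)$ with $v_\varep\in H^1_0(B(y,4t);\mathbb{R}^d)$, $\mathcal{L}_\varep(v_\varep)+\nabla\pi_\varep=\operatorname{div}(f)$, $\operatorname{div}(v_\varep)=0$, so that $\average_{4B}|\nabla v_\varep|^2\le C\average_{4B}|f|^2$, while $z_\varep=u_\varep-v_\varep$ solves the homogeneous system and Lemma \ref{lemma-7.3} gives $(\average_{2B}|\nabla z_\varep|^{\bar q})^{1/\bar q}\le C(\average_{4B}|\nabla u_\varep|^2)^{1/2}+C(\average_{4B}|f|^2)^{1/2}$. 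If instead $\delta(y)<8t$, I pick $z_*\in\partial\Omega$ with $|z_*-y|<8t$, note that $B\cap\Omega\subset B(z_*,10t)\cap\Omega$ and, for $c_0$ small, $B(z_*,20t)\subset B(0,1)$, and decompose $u_\varep=v_\varep+z_\varep$ in $B(z_*,20t)\cap\Omega$ with $v_\varep\in H^1_0(B(z_*,20t)\cap\Omega;\mathbb{R}^d)$, $\mathcal{L}_\varep(v_\varep)+\nabla\pi_\varep=\operatorname{div}(f)$, $\operatorname{div}(v_\varep)=0$, so that $\average_{B(z_*,20t)\cap\Omega}|\nabla v_\varep|^2\le C\average_{B(z_*,20t)\cap\Omega}|f|^2$, while $z_\varep=u_\varep-v_\varep$ solves the homogeneous system in $B(z_*,20t)\cap\Omega$ with $z_\varep=0$ on $B(z_*,20t)\cap\partial\Omega$; Lemma \ref{lemma-8.2}, after rescaling $B(z_*,20t)\cap\Omega$ to unit radius, then yields $(\average_{B(z_*,10t)\cap\Omega}|\nabla z_\varep|^{\bar q})^{1/\bar q}\le C(\average_{B(z_*,20t)\cap\Omega}|\nabla u_\varep|^2)^{1/2}+C(\average_{B(z_*,20t)\cap\Omega}|f|^2)^{1/2}$.

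In either case I put $F_B=|\nabla v_\varep|$ and $R_B=|\nabla z_\varep|$, extended by zero outside $\Omega$, so that $|F|\le|F_B|+|R_B|$ on $2B$; using the uniform comparability $|B(z_*,ct)\cap\Omega|\simeq|B(z_*,ct)|$ (Lipschitz character) to pass between averages over $B\cap\Omega$ and averages over full balls, the two hypotheses (\ref{temp2.25}) of Theorem \ref{real-variable} hold with ``$f$'' $=|f|$ and a fixed constant $c_2$ (e.g.\ $c_2=32$). Theorem \ref{real-variable} then gives $(\average_{B_0}|\nabla u_\varep|^q)^{1/q}\le C(\average_{4B_0}|\nabla u_\varep|^2)^{1/2}+C(\average_{4B_0}|f|^q)^{1/q}$; recalling the zero extension and undoing the reduction $g=0$, this is the asserted bound on the ball $B(0,c_0)$. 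Letting the centre vary over $B(0,1/2)\cap\partial\Omega$, combining with the interior estimate of Theorem \ref{theorem-7.1} applied on balls $B(w,c\,\delta(w))$ for $w\in B(0,1/2)\cap\Omega$, and using a routine covering argument upgrade this to the estimate for $\nabla u_\varep$ on $B(0,1/2)\cap\Omega$; the estimate for $p_\varep$ then follows from the local pressure estimate above.

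I expect the main obstacle to be the boundary case of the decomposition: one must flatten $\partial\Omega$ near $z_*$ and solve the auxiliary problems — the compactly supported corrector $v_\varep$ and the divergence corrector $\Phi$ for $g$ — on the one-parameter family of Lipschitz domains $B(z_*,ct)\cap\Omega$ with constants uniform in $t$ and $\varep$, and one must verify that the resulting families $\{F_B\}$, $\{R_B\}$ genuinely satisfy (\ref{temp2.25}) for all admissible $B\subset 2B_0$ (balls lying outside $\overline\Omega$, or meeting $\Omega$ in a set of tiny measure, being trivial since $F\equiv0$ there). Once this bookkeeping is in place, everything is reduced, via Lemmas \ref{lemma-7.3} and \ref{lemma-8.2}, to $W^{1,q}$ estimates already established.
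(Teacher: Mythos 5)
Your proposal is correct and follows exactly the route the paper intends: the paper's own proof of this theorem consists of the single remark that it ``follows from Lemmas \ref{lemma-7.3} and \ref{lemma-8.2} by a real-variable argument in the same manner as in the proof of Theorem \ref{theorem-7.1},'' with details deferred to \cite{Shen-2005}, and your write-up supplies precisely that argument (interior versus boundary dichotomy for the balls $B$, the decomposition $u_\varep=v_\varep+z_\varep$, Lemma \ref{lemma-7.3} or \ref{lemma-8.2} applied to $z_\varep$, and the pressure recovered from $\|\nabla p_\varep\|_{W^{-1,q}}$). The only point worth tightening is the reduction to $g=0$: the Bogovskii right inverse on $B(0,1)\cap\Omega$ requires $g$ to have zero mean there, so one should first subtract the average of $g$ (a constant controlled by $\|g\|_{L^q}$) or only require $\Phi$ to vanish on the boundary portion $B(0,1)\cap\partial\Omega$.
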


\begin{proof}
This theorem follows from Lemmas \ref{lemma-7.3} and  \ref{lemma-8.2}
by a real-variable argument in the same manner as in the proof of Theorem \ref{theorem-7.1}.
We omit the details and refer the reader to \cite{Shen-2005}. 
\end{proof}

Finally, we give the proof of Theorem \ref{main-theorem-3}

\begin{proof}[\bf Proof of Theorem \ref{main-theorem-3}]
Since $h\in B^{1-\frac{1}{q}, q} (\partial\Omega; \mathbb{R}^d)$ and $\Omega$ is a bounded $C^1$
domain,
there exists $H\in W^{1, q}(\Omega; \mathbb{R}^d)$ such that
$$
\| H\|_{W^{1, q} (\Omega)} \le C\, \| h\|_{B^{1-\frac{1}{q}, q} (\partial\Omega)}.
$$
Thus, by considering $u_\varep -H$, we may assume that $h=0$.
Note that if $u_\varep, v_\varep \in W^{1,2}_0(\Omega; \mathbb{R}^d)$ satisfy 
\begin{equation}\label{8.4-0}
\left\{
\aligned
\mathcal{L}_\varep (u_\varep) +\nabla p_\varep  &=\text{div}(f) \\
\text{\rm div} (u_\varep)  &= g
\endaligned
\right.
\quad \text{ and } \quad 
\left\{
\aligned
\mathcal{L}^*_\varep (v_\varep) +\nabla \pi_\varep  &=\text{div}(F) \\
\text{\rm div} (v_\varep)  &= G 
\endaligned
\right.
\end{equation}
in $\Omega$, then
\begin{equation}\label{8.4-1}
\int_\Omega \nabla u_\varep \cdot F +\int_\Omega \left(p_\varep-\average_\Omega p_\varep\right)\cdot G 
=\int_\Omega \nabla v_\varep \cdot f +\int_\Omega \left(\pi_\varep -\average_\Omega \pi_\varep\right)\cdot g.
\end{equation}
This allows us to use a duality argument that reduces the theorem to 
the estimate
\begin{equation}\label{8.4-2} 
\|\nabla u_\varep\|_{L^q(\Omega)} 
+\| p_\varep -\average_\Omega p_\varep\|_{L^p(\Omega)}
\le C\Big\{ \| f\|_{L^q(\Omega)} +\| g\|_{L^q(\Omega)} \Big\}
\end{equation}
for $2<q<\infty$, where $\mathcal{L}_\varep (u_\varep) +\nabla p_\varep =\text{\rm div}(f)$,
div$(u_\varep)=g$ in $\Omega$, and $u_\varep =0$ on $\partial\Omega$.

Finally, by covering $\Omega$ with balls of radius $r_0=c_0 \text{\rm diam}(\Omega)$,
we may deduce from Theorems \ref{theorem-7.1} and \ref{theorem-8.3} that
$$
\aligned
\| \nabla u_\varep\|_{L^q(\Omega)}
&\le C \Big\{ \|\nabla u_\varep\|_{L^2(\Omega)}
+\| f\|_{L^q(\Omega)} +\| g\|_{L^q(\Omega)} \Big\}\\
&\le C\Big\{ \| f\|_{L^q(\Omega)} +\| g\|_{L^q(\Omega)} \Big\},
\endaligned
$$
where we have used the estimate in Theorem \ref{theorem-2.1} as well as $q>2$.
Also, note that
$$
\aligned
\| p_\varep -\average_{\Omega} p_\varep\|_{L^q(\Omega)}
&\le C\, \| \nabla p_\varep\|_{W^{-1, q} (\Omega)}\\
&\le C\Big\{ \|\nabla u_\varep\|_{L^q(\Omega)} +\| f\|_{L^q(\Omega)} \Big\}\\
&\le C \Big\{ \| f\|_{L^q(\Omega)} +\| g\|_{L^q(\Omega)} \Big\},
\endaligned
$$
where we have used $\nabla p_\varep =\mathcal{L}_\varep (u_\varep)-\text{\rm div} (f)$ in $\Omega$
for the second inequality.
This completes the proof.
\end{proof}

\bibliographystyle{plain}
\bibliography{Stokes.bbl}

\medskip

\begin{flushleft}
Shu Gu,
Department of Mathematics,
University of Kentucky,
Lexington, Kentucky 40506,
USA.

E-mail: gushu0329@uky.edu
\end{flushleft}

\begin{flushleft}
Zhongwei Shen,
Department of Mathematics,
University of Kentucky,
Lexington, Kentucky 40506,
USA.

E-mail: zshen2@uky.edu
\end{flushleft}

\medskip

\noindent \today

\end{document}